\documentclass[11pt]{article}

\usepackage[margin=1in]{geometry}
\usepackage{amsmath,amssymb,amsfonts,amsthm}
\usepackage{array,mathtools,bm,booktabs,graphicx,microtype,xcolor}\usepackage[normalem]{ulem}
\usepackage{placeins}\usepackage{tikz}
\usepackage[hidelinks]{hyperref}

\newtheorem{theorem}{Theorem}[section]
\newtheorem{lemma}[theorem]{Lemma}
\newtheorem{corollary}[theorem]{Corollary}
\theoremstyle{definition}
\newtheorem{definition}[theorem]{Definition}
\newtheorem{assumption}[theorem]{Assumption}
\theoremstyle{remark}
\newtheorem{remark}[theorem]{Remark}

\DeclareMathAlphabet{\mathdutchcal}{U}{dutchcal}{m}{n}
\SetMathAlphabet{\mathdutchcal}{bold}{U}{dutchcal}{b}{n}

\newcommand{\R}{\mathbb{R}}
\newcommand{\E}{\mathbb{E}}
\newcommand{\one}{\mathbf{1}}
\newcommand{\cF}{\mathcal{F}}

\newcommand{\neff}{n_{\mathrm{eff}}}
\newcommand{\norm}[1]{\left\lVert #1\right\rVert}

\DeclareMathOperator{\osc}{osc}
\newcolumntype{L}[1]{>{\raggedright\arraybackslash}p{#1}}
\newcommand{\zero}{\mathbf{0}}
\newcommand{\err}{\mathdutchcal{r}}

\title{On the Limits of Stochastic Push-Pull over Time-Varying Digraphs}
\author{Feiming Wang\thanks{Feiming Wang and Yutong He contributed equally to this work.}\textsuperscript{\phantom{*},}\thanks{School of Statistics and Data Science, Nankai University, Tianjin, China. \texttt{2637275625@qq.com}. This work was completed during Feiming Wang's research internship at Peking University.}
\and Yutong He\footnotemark[1]\textsuperscript{\phantom{*},}\thanks{Center for Machine Learning Research, Peking University. \texttt{yutonghe@pku.edu.cn}}
\and Sulaiman Alghunaim\thanks{Electrical Engineering Department, Kuwait University. \texttt{sulaiman.alghunaim@ku.edu.kw}}
\and Kun Yuan\thanks{Corresponding author. Center for Machine Learning Research, Peking University. \texttt{kunyuan@pku.edu.cn}}}
\date{}
\hypersetup{
  pdftitle={On the Limits of Stochastic Push-Pull over Time-Varying Digraphs},
  pdfauthor={Feiming Wang, Yutong He, Sulaiman Alghunaim, Kun Yuan},
  pdfkeywords={decentralized stochastic optimization, directed networks, time-varying graphs, Push-Pull, linear speedup}
}

\begin{document}
\maketitle

\begin{abstract}
Stochastic Push-Pull achieves linear speedup with respect to the network size $n$ on static strongly connected digraphs, but whether this speedup persists on time-varying networks remains unclear.  In this work, we construct a quadratic decentralized problem on period-two networks showing that stochastic Push-Pull may fail to attain linear speedup over time-varying digraphs.
For $p$-periodic networks, we identify an effective node count $\neff\le n$ that governs the leading stochastic term.
Specifically, for smooth nonconvex objectives, we establish convergence upper bounds and matching lower bounds for stochastic Push-Pull with constant stepsizes, both exhibiting the leading-order dependence $\Theta(1/\sqrt{\neff T})$. \textcolor{black}{When $p=1$, the identity $\neff=n$ recovers the usual linear speedup for static digraphs.} However, periodic networks can have $\neff=o(n)$, demonstrating the loss of linear speedup under temporal variation. Numerical experiments illustrate effective-count scaling and the separation between static and periodic networks, consistent with our theoretical findings.
\end{abstract}

\par\noindent\textbf{Keywords.}
decentralized stochastic optimization, directed networks, time-varying
graphs, Push-Pull, gradient tracking, linear speedup, effective node count,
nonconvex optimization

\par

\par

\section{Introduction and main results}\label{sec:introduction}

With $n$ independent gradient samples, minibatch stochastic gradient descent reduces gradient variance by a factor of $n$ and improves the leading nonconvex stationarity rate from $\mathcal{O}(T^{-1/2})$ to $\mathcal{O}((nT)^{-1/2})$ \cite{ghadimilan2013}. A decentralized optimization algorithm achieves linear speedup only when it attains the same leading rate, as established for decentralized SGD in \cite{lian2017}.

The Push-Pull algorithm accommodates digraphs through row-stochastic model mixing and column-stochastic gradient tracking \cite{pu2021pushpull}.  On arbitrary strongly connected \emph{static} digraphs it achieves the usual convergence rate with linear speedup \cite{youpu2026,liangluoyuan2026}; time-varying variants converge under several deterministic and stochastic regimes \cite{nedic2017,saadatniaki2020,nedicnguyen2025,nguyen2023,chen2025}. These results raise the following question:

\begin{center}
\vspace{2mm}
    \textit{Does the Push-Pull algorithm retain the usual convergence rate with linear speedup on time-varying networks, e.g., periodic ones?}
\vspace{2mm}
\end{center}

By constructing a decentralized problem on period-two networks, we 
find that stochastic Push-Pull {\color{black} can} fail to attain linear speedup with respect to the number of workers $n$. 
{\color{black}\textcolor{black}{Instead,} for each fixed periodic network, we prove that its leading stochastic term is
governed by an effective node count $\neff\in(0,n]$, determined by
the periodic sequence of mixing matrices.}
{\color{black}\textcolor{black}{Specifically,} for phase $r$ in a $p$-periodic sequence, let $b_r\in\R^n$ \textcolor{black}{be the phase weights defined in Definition~\ref{def:br}}. The component $(b_r)_i$ is the cumulative weight of node $i$'s stochastic gradient, accounting for the immediate model update and subsequent propagation through the tracker. Set} 
\begin{equation}\label{eq:intro-h-q}
 h:=\frac1p\sum_{r=0}^{p-1}b_r^\top\one,\qquad q:=\frac1p\sum_{r=0}^{p-1}\norm{b_r}^2.
\end{equation}

We then define $\neff$ as
\begin{equation}\label{eq:intro-neff}
    \neff:=\frac{h^2}{q}.
\end{equation}
Intuitively, $h$ measures average descent gain and $q$ measures average noise energy; hence $\sqrt q/h=1/\sqrt{\neff}$ is the noise-to-signal ratio.

For the fixed periodic network $\mathcal N:=\{(R_r,C_r)\}_{r=0}^{p-1}$, we use the finite-horizon stationarity measure
\begin{equation}\label{eq:stationarity}
 \mathcal A_T:=\frac1T\sum_{t=0}^{T-1}\E[\norm{\nabla f(\bar x_t)}^2],
\end{equation}
where $f$ is the global objective and $\bar x_t$ is the arithmetic average of the local model states $x_{i,t}$.

Under mild assumptions, we prove that Push-Pull with suitably chosen constant stepsizes converges on periodic time-varying networks at the rate
\begin{equation}\label{eq:intro-upper}
 {\color{black}\mathcal A_T=\mathcal{O}\left(\sigma\sqrt{\frac{L\Delta}{\neff T}}\right)+\mathcal{O}(T^{-1}).}
\end{equation}

For every periodic network satisfying the assumptions of the convergence theory, we also establish the following lower bound for Push-Pull with constant stepsizes on the homogeneous one-dimensional quadratic $f_i(x)=Lx^2/2$:
\begin{equation}\label{eq:intro-lower}
 \inf_{\alpha\geq0}\mathcal A_T=\sigma\sqrt{\frac{2L\Delta}{\neff T}}+o(T^{-1/2}).
\end{equation}
{\color{black}The matching bounds identify $\neff$ as the factor governing the leading stochastic term. To relate this scaling to the number of workers, we establish} 
\begin{equation*}
 0<\neff\leq n, \qquad p=1\ \Longrightarrow\ \neff=n.
\end{equation*}
Thus $\neff$ never exceeds $n$ {\color{black} and} reduces to $n$ for static networks. {\color{black} For periodic networks}, however, $\neff/n$ can be arbitrarily small, so Push-Pull need not attain $n$-worker linear speedup in {\color{black} time-varying digraphs.}
\subsection{Stochastic Push-Pull}

Consider the following decentralized optimization problem:
\begin{equation}\label{eq:problem}
 \min_{x\in\R^d} f(x):=\frac1n\sum_{i=1}^n f_i(x).
\end{equation}
At iteration $t$, agent $i$ stores $x_{i,t}\in\R^d$ and queries
\begin{equation}\label{eq:oracle}
 g_{i,t}=\nabla f_i(x_{i,t})+\varepsilon_{i,t}.
\end{equation}
{\color{black}Here $\varepsilon_{i,t}$ denotes the stochastic-gradient noise, whose conditional moments are specified in Assumption~\ref{ass:objective}.}

{\color{black}
{\color{black}Let $R_t \in \mathbb{R}^{n \times n}$ be a row-stochastic matrix satisfying $R_t\one=\one$ and $C_t \in \mathbb{R}^{n \times n}$ be a column-stochastic matrix satisfying $\one^\top C_t=\one^\top$.}   Then, stochastic Push-Pull iterates via
\begin{subequations}\label{eq:pushpull}
\begin{align}
 X_{t+1}&=R_tX_t-\alpha Y_t,\label{eq:pushpull-x}\\
 Y_{t+1}&=C_tY_t+G_{t+1}-G_t,\label{eq:pushpull-y}
\end{align}
\end{subequations}
with initialization
\begin{equation}\label{eq:init}
 X_0=\one x_0^\top,\qquad Y_0=G_0,
\end{equation}
where $X_t,Y_t,G_t\in\R^{n\times d}$ stack the local models, trackers, and stochastic gradients by rows, and $Z_t=[\nabla f_i(x_{i,t})^\top]_{i=1}^n$, so that
\begin{equation}\label{eq:gradient-decomposition}
G_t=Z_t+\varepsilon_t.
\end{equation}
}
We use ${\color{black}\bar x_t:=n^{-1}X_t^\top\one\in\R^d}$ to denote the arithmetic average of the model states, and the convergence metric is $\mathcal{A}_T$ as defined in \eqref{eq:stationarity}. 
We write $E_t:=Y_t-G_t$ for the tracker residual.  For a matrix sequence, we use $M_{a:b}=M_aM_{a-1}\cdots M_b$ when $a\ge b$, and $M_{a:b}=I$ when $a<b$.

\subsection{Related work}\label{sec:related}

\paragraph{Directed architectures and static networks.}
{\color{black}The AB/Push-Pull framework combines row-stochastic model mixing with column-stochastic gradient tracking over directed graphs \cite{xinkhan2018,pu2021pushpull}. For smooth nonconvex stochastic optimization on static strongly connected digraphs, the usual $n$-worker scaling is established in \cite{youpu2026,liangluoyuan2026}.}   
{\color{black}The leading stochastic term has order $\mathcal O((nT)^{-1/2})$;
in our periodic-network framework, static mixing corresponds to
$\neff=n$.}
\paragraph{Time-varying networks.}
{\color{black}The analysis in \cite{koloskova2020} covers decentralized SGD with changing topology and local updates. Under a window-mixing condition, stochastic gradient tracking over time-varying doubly stochastic networks achieves linear speedup for smooth convex and strongly convex objectives \cite{alghunaim2026}.} For directed networks, stochastic gradient-push has been studied in \cite{nedicol2016,assran2019}, while Push-DIGing, TV-AB, AB/Push-Pull, and SONATA provide gradient-tracking methods for time-varying digraphs \cite{nedic2017,saadatniaki2020,nedicnguyen2025,scutarisun2019}. {\color{black}The stochastic AB/Push-Pull method SAB-TV addresses smooth strongly convex objectives over time-varying directed networks \cite{nguyen2023}. For nonconvex objectives, the method in \cite{chen2025} combines stochastic momentum with gradient tracking.} 

\paragraph{Network-dependent efficiency.}
{\color{black}The analysis in \cite{xinkhankar2021} identifies conditions under which stochastic gradient tracking matches centralized minibatch SGD for nonconvex objectives. Refined analyses improve the network dependence of gradient-tracking convergence bounds in convex and nonconvex settings \cite{koloskova2021improved,alghunaimyuan2022,alghunaimyuan2024}. Further work examines the influence of topology beyond the spectral gap \cite{vogels2023} and finite-time consensus and its use in gradient tracking \cite{nguyenetal2025}. Complexity lower bounds for decentralized nonconvex stochastic optimization are established for static networks in \cite{ludesa2021} and for time-varying networks in \cite{huangyuan2022}. The work \cite{liangetal2025} develops network metrics, lower bounds, and algorithms for directed networks.} 

{\color{black}
\paragraph{Contributions.}
{
Our contributions are as follows.
\begin{itemize}
\item {\color{black}\emph{Failure of linear speedup.}}
{\color{black}We construct quadratic instances under the stated assumptions for which stochastic Push-Pull has a stepsize-optimized convergence rate of $\Theta(T^{-1/2})$, with constants independent of the network size $n$. Thus, linear speedup need not hold on time-varying networks.}
\item {\color{black}\emph{Matching upper and lower bounds.}}
{\color{black}For each fixed periodic network satisfying our assumptions, we establish upper bounds for smooth nonconvex objectives and lower bounds for stochastic Push-Pull with constant stepsizes, with matching leading-order dependence $\Theta(1/\sqrt{\neff T})$. The linear speedup factor $\neff$ characterizes the effective node count of the periodic network.}
\item {\color{black}\emph{Numerical validation.}}
{\color{black}Experiments on nonconvex logistic regression and quadratic objectives illustrate effective-count scaling and the loss of linear speedup under time-varying mixing.}
\end{itemize}
}
}

\paragraph{Organization.}
Section~\ref{sec:asp} states the assumptions, and Section~\ref{sec:obstructions} proves that Push-Pull does not necessarily have the linear speedup factor $n$. Sections~\ref{sec:preliminary} and \ref{sec:main-results} introduce preliminaries and present the upper and lower convergence bound theorems. Section~\ref{sec:aux-identities} develops lemmas for the proofs in Sections~\ref{sec:upper-proof} and \ref{sec:lower}. The remaining sections present numerical experiments, discussion, and appendices.

\section{Assumptions}\label{sec:asp}

\begin{assumption}[Objective and oracle]\label{ass:objective}
Every $f_i$ is globally $L$-smooth, $f$ is bounded below, and
\begin{equation*}
 0<\Delta:=f(x_0)-\inf_x f(x)<\infty.
\end{equation*}
Let $\cF_t^-$ denote the filtration containing information available immediately before the iteration-$t$ oracle samples are drawn. The noise satisfies
\begin{equation}\label{eq:noise}
\E[\varepsilon_t\mid\cF_t^-]=0,\qquad\E[\norm{\varepsilon_{i,t}}^2\mid\cF_t^-]\leq\sigma^2,
\end{equation}
for $\sigma\ge0$, and the noises across agents are conditionally independent given $\cF_t^-$.
\end{assumption}

The conditional assumptions imply that, for every $\cF_t^-$-measurable (or predictable) matrix $A$ of compatible dimensions,
\begin{equation}\label{eq:matrix-noise}
\E[\norm{A\varepsilon_t}_F^2\mid\cF_t^-]\leq\sigma^2\norm{A}_F^2.
\end{equation}

\begin{assumption}[Periodic network]\label{ass:network}
For a fixed integer $p\geq1$,
\begin{equation*}
 R_{t+p}=R_t,\qquad C_{t+p}=C_t.
\end{equation*}
The complete matrix sequence is fixed before the run, independently of the stochastic-gradient oracle. At every iteration, $R_t$ is row stochastic, $C_t$ is column stochastic, both have positive diagonals, all nonzero entries are at least $w_0>0$, and the associated digraphs are strongly connected.
\end{assumption}

All results below condition on this fixed network and take expectation only over the oracle noise.  Unless stated otherwise, the stepsize is deterministic and held constant over the horizon.

\section{Failure of linear speedup on time-varying networks}
\label{sec:obstructions}

We construct a family of period-two networks for which the stochastic convergence rate remains of order $T^{-1/2}$ as the number of nodes grows.

{\color{black}
\begin{theorem}[Failure of $n$-worker scaling]
\label{thm:periodic-counterexample}
For every integer $s\ge2$, set $n=2s^2$. There is a deterministic period-two network $\mathcal N_n$ satisfying Assumption~\ref{ass:network} with $w_0=1/128$. Its column-stochastic matrices are equal to a symmetric doubly stochastic matrix $C$ satisfying
\[
 \left\|C-\frac1n\one\one^\top\right\|_2\le1-\gamma_0,\qquad \gamma_0:=\frac{8-5\sqrt2}{32}>0.
\]
Take $d=1$, $f_i(x)=x^2/2$, $x_0=1$ in \eqref{eq:init}, and independent node-time Rademacher noises taking values $\pm1$, so that $L=\sigma=1$ and $\Delta=1/2$. Let $\mathcal A_{n,T}(\alpha)$ denote the stationarity measure \eqref{eq:stationarity} on this instance with constant stepsize $\alpha\ge0$. For every integer $T\ge256$,
\begin{equation}\label{eq:periodic-separation}
 \frac{1}{128\sqrt T}\le\inf_{\alpha\ge0}\mathcal A_{n,T}(\alpha)\le\mathcal A_{n,T}\!\left(\frac{1}{4\sqrt T}\right)\le\frac5{\sqrt T}.
\end{equation}
\end{theorem}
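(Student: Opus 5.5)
The plan is to exploit the fact that with $f_i(x)=x^2/2$ we have $Z_t=X_t$, so \eqref{eq:pushpull} becomes a linear time-varying stochastic recursion driven by the Rademacher noise. Then $\E[\bar x_t^2]$ splits exactly into a deterministic part (the response to $x_0=1$ with $\varepsilon\equiv0$) and a noise part, which is a sum over $(i,\tau)$ of squared impulse-response coefficients. Both the lower and the upper bound in \eqref{eq:periodic-separation} then reduce to estimating these two parts uniformly in $n$.

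\emph{Construction.} Take $n=2s^2$ nodes arranged so that one phase matrix, say $R_0$, acts as a hub broadcast. Every row $i$ of $R_0$ puts a constant weight (at least $w_0=1/128$) on itself and a constant weight on a designated hub node, or on a small set of hubs. Row sums stay $1$, but the column sum of the hub is of order $n$. Consequently $\one^\top R_0/n$ concentrates a constant fraction of its mass on $O(1)$ nodes, and after each phase-$0$ step $\bar x$ inherits a constant multiple of the noise injected at those few nodes, not an average over $n$ nodes. The matrix $R_1$ is chosen with a complementary pattern. This is where the $s^2$ structure is used: it gives a two-level grouping into $s$ clusters of size $2s$, which keeps every digraph strongly connected with entries bounded below. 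The column matrix $C_0=C_1=C$ is a fixed symmetric doubly stochastic matrix on the same index set, for example a Kronecker or lazy-expander combination. For it I would verify $\|C-\frac1n\one\one^\top\|_2\le1-\gamma_0$ directly from its eigenvalues; the form $\gamma_0=(8-5\sqrt2)/32$ suggests a $2\times2$ block whose eigenvalues involve $\sqrt2$.

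\emph{Lower bound.} I would track the scalar pair $(\bar x_t,\bar y_t)$. Since $C$ is doubly stochastic, $\bar y_t=\bar g_t$, while $\bar x_{t+1}=\frac1n\one^\top R_tX_t-\alpha\bar y_t$. Writing $X_t=\one\bar x_t+\tilde X_t$ gives $\bar x_{t+1}=\bar x_t+\frac1n\one^\top R_t\tilde X_t-\alpha\bar g_t$. Within one period the correction term carries the hub's deviation, which contains the hub's own fresh noise scaled by $\alpha$. This should show that the noise contribution to $\E[\bar x_{t+1}^2]$ grows by at least $c\alpha^2$ per period with $c$ independent of $n$, so that summing over the horizon gives $\frac1T\sum_t\E[\bar x_t^2]\gtrsim\min(\alpha,\alpha^2T)$. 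The deterministic part decays no faster than $(1-C'\alpha)^t$, contributing $\gtrsim\min(1,1/(\alpha T))$. Minimizing $\max$ of these over $\alpha$ yields $\gtrsim T^{-1/2}$, and I would track constants loosely to reach $1/128$. Large stepsizes ($\alpha\gtrsim1$) are treated separately, either by instability or by the noise term being $\Omega(1)$.

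\emph{Upper bound.} At $\alpha=1/(4\sqrt T)$ I would bound the consensus error $\tilde X_t$ and the tracker residual by a contraction argument over one period. The product $R_1R_0$ is a primitive row-stochastic matrix with entries bounded below, so it contracts deviations from consensus in the $\infty$-norm, or in a weighted norm using its Perron vector, by a constant factor. The matrix $C$ contracts by $1-\gamma_0$. Combining these gives $\E\|\tilde X_t\|^2/n=O(\alpha^2)$ uniformly in $n$, hence $\E[\bar x_t^2]\le(1-\alpha/2)^{2t}+O(\alpha)$. Averaging over $t$ gives at most $O(1/(\alpha T))+O(\alpha)\le 5/\sqrt T$.

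The main obstacle I expect is the lower bound's noise floor. I need to show rigorously that the hub's noise is not cancelled by the subsequent dynamics (tracker corrections and the $R_1$ phase). This must hold for every $\alpha$ and uniformly in $n$, with explicit constants. I would first attempt this by computing the impulse response of a single hub noise $\varepsilon_{h,\tau}$ on $\bar x_{\tau+2}$ exactly, using the explicit block structure, and then lower-bounding by that single contribution plus orthogonality of independent noises.
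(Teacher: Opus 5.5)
Your overall framework matches the paper: you split $\E[\bar x_t^2]$ exactly into the deterministic $k^{2t}$ ($k=1-\alpha$) plus a noise variance, treat separate stepsize regimes, and use hub-broadcast rows. The paper's rows are $R_{2j}=\frac12C+\frac12\one e_1^\top$ and $R_{2j+1}=\frac12C+\frac12\one e_2^\top$.

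The gap is the central claim of your lower bound: that the noise part of $\E[\bar x_t^2]$ \emph{accumulates} by $c\alpha^2$ per period. Your reason is that $\frac1n\one^\top R_0$ puts constant mass on a hub, so $\bar x_{\tau+2}$ picks up about $c\alpha\varepsilon_{h,\tau}$. That holds verbatim for the static hub network $R_t\equiv\frac12C+\frac12\one e_1^\top$, where $p=1$ and $\neff=n$. There the later tracker corrections $(C-I)\varepsilon_\tau$ cancel the hub's persistent contribution. With $\ell_-=(e_1-e_2)/2$, the lag-summed response of $\ell_-^\top X$ to $\ell_-^\top\varepsilon_\tau$ is exactly $0$ whenever it converges. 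Accordingly, your proposed check (the impulse response on $\bar x_{\tau+2}$ plus orthogonality) only gives $\E[\bar x_t^2]\ge k^{2t}+c\alpha^2$. Minimizing $\frac1T\sum_tk^{2t}+c\alpha^2$ over $\alpha$ gives order $T^{-2/3}$, not $T^{-1/2}$.

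What you need is that each noise's contribution to $\bar x_t$ stays $\gtrsim\alpha k^{t-\tau}$ for all later $t$, and this hinges on the hub \emph{alternating}. The paper makes this checkable by taking $C\ell_-=-\frac14\ell_-$. Then $\ell_-^\top R_t=-\frac18\ell_-^\top$ and $u^\top R_t\ell_-=(-1)^t/4$. The sign-conjugated contrast pair $x_t^-=(-1)^t\ell_-^\top X_t$, $e_t^-=(-1)^t\ell_-^\top E_t$ evolves by $H_\alpha=\bigl[\begin{smallmatrix}1/8+\alpha&\alpha\\ 5/4&1/4\end{smallmatrix}\bigr]$ with input $(\alpha,5/4)^\top\varepsilon_t^-$. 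The projected average obeys $\bar x^{(-)}_{t+1}=k\bar x^{(-)}_t+\frac12x_t^-$. For $0\le\alpha\le1$ everything is entrywise nonnegative, so all response coefficients share one sign and the first equals $\alpha$. Splitting the noise into $P_-\varepsilon_t$ and $(I-P_-)\varepsilon_t$ then yields $\E[\bar x_t^2]\ge k^{2t}+\frac{\alpha^2}{8}\sum_{j=0}^{t-2}k^{2j}$, which is what gives $1/(128\sqrt T)$.

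Two further points need repair.

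First, the construction. The constraints $w_0=1/128$ and an $n$-independent gap $\gamma_0$ force $C$ to be a bounded-degree expander family. The paper uses $S\otimes J_2-\frac14I_m\otimes H_2$ with $S=\frac34I+\frac14P$ and $P$ the Margulis--Gabber--Galil graph on $\mathbb Z_s^2$. This is where $n=2s^2$ and the $\sqrt2$ in $\gamma_0$ (via $\lambda_2(P)\le5\sqrt2/8$) come from. By contrast, dense blocks of size $2s$ would violate $w_0$, and a pure star $(1-c)I+c\one e_h^\top$ is not strongly connected. Also, $R_1R_0$ does not have entries bounded below; the usable fact is the per-step contraction $\Pi R_t=\frac12C\Pi$.

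Second, the upper bound. $\E\|\tilde X_t\|^2/n=O(\alpha^2)$ does not imply $\operatorname{Var}(\bar x_t)=O(\alpha)$. The feed into $\bar x$ is $\frac12$ times the hub's own deviation, so you need a per-coordinate bound. Moreover, filtering a sequence with pointwise second moment $O(\alpha^2)$ by $k^{t-1-s}$ only gives $O(1)$ by Minkowski unless you control its temporal correlation. The paper diagonalizes $C$ on $\one^\perp$ and bounds the $\ell_1$ norm of each mode's $2\times2$ noise response by $8\alpha$. It then applies $\|g*f\|_{\ell_2}\le\|g\|_{\ell_2}\|f\|_{\ell_1}$ with $\|g\|_{\ell_2}^2=1/(\alpha(2-\alpha))$, which also delivers the explicit constant $5$.
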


\begin{proof}
Put $m=s^2$, $u=\one/n$, and $J=\one u^\top$. For each vertex $(a,b)\in\mathbb Z_s^2$, assign weight $1/8$ to each of the eight neighbors
\[
 (a\pm2b,b),\quad (a\pm(2b+1),b),\quad (a,b\pm2a),\quad (a,b\pm(2a+1)).
\]
All coordinates are taken modulo $s$, and coincident neighbors are counted with multiplicity. Let $P\in\R^{m\times m}$ be the resulting matrix. This is the normalized adjacency matrix of the Margulis--Gabber--Galil graph. It is symmetric and stochastic, and its eigenvalues satisfy
\[
 1=\lambda_1(P)>\lambda_2(P)\ge\cdots\ge\lambda_m(P)\ge-1,\qquad \lambda_2(P)\le\frac{5\sqrt2}{8};
\]
see \cite[Construction~8.1 and Theorem~8.2]{hoory2006expanders}. Define
\[
 S=\frac34I_m+\frac14P,\qquad J_2=\frac12\one_2\one_2^\top,\qquad H_2=I_2-J_2.
\]
Index the $n$ nodes in $m$ pairs, with nodes one and two corresponding to $(0,0)\in\mathbb Z_s^2$, and set
\begin{equation}\label{eq:uniform-network}
 \begin{aligned}
 C_t=C&:=S\otimes J_2-\frac14I_m\otimes H_2,\\
 R_{2j}&:=\frac12C+\frac12\one e_1^\top,\qquad R_{2j+1}:=\frac12C+\frac12\one e_2^\top,\qquad j\ge0.
 \end{aligned}
\end{equation}
The diagonal pair blocks of $C$ are
\[
 C_{ii}^{\rm block}=\begin{bmatrix}S_{ii}/2-1/8&S_{ii}/2+1/8\\S_{ii}/2+1/8&S_{ii}/2-1/8\end{bmatrix},\qquad S_{ii}\ge\frac34,
\]
and every entry in an off-diagonal pair block is $S_{ij}/2=P_{ij}/8$. Thus the diagonal entries of $C$ are at least $1/4$, the off-diagonal entries within each pair are at least $1/2$, and every other positive entry is at least $1/64$. In particular,
\[
 C=C^\top,\qquad C\one=\one,\qquad R_t\one=\one,\qquad \min_{i,j:(R_t)_{ij}>0}(R_t)_{ij}\ge\frac1{128}.
\]
\textcolor{black}{Since $\lambda_2(P)<1$, the graph of $P$ is connected, and every pair has positive internal edges.} Hence the graph of $C$ is connected. Its support is contained in that of each $R_t$, whose diagonal entries are at least $1/8$. These matrices therefore satisfy Assumption~\ref{ass:network} with $p=2$ and $w_0=1/128$.

For $v\in\R^m$ and $a_2=(1,-1)^\top$, the definition of $C$ gives
\[
 C(v\otimes\one_2)=(Sv)\otimes\one_2,\qquad C(v\otimes a_2)=-\frac14v\otimes a_2.
\]
These two orthogonal subspaces span $\R^n$. The eigenvalues on the first subspace are $3/4+\lambda_i(P)/4\in[1/2,1]$, while those on the second subspace equal $-1/4$. Since $C$ is symmetric,
\[
 \|C-J\|_2=\max\left\{\frac14,\frac34+\frac14\lambda_2(P)\right\}\le\frac34+\frac{5\sqrt2}{32}=1-\gamma_0.
\]

For the quadratic objective, $G_t=X_t+\varepsilon_t$. The initialization and column stochasticity give $E_0=0$ and $\one^\top E_t=0$. Substituting $E_t=Y_t-G_t$ into \eqref{eq:pushpull} yields
\begin{equation}\label{eq:example-residual}
 \begin{aligned}
 X_{t+1}&=(R_t-\alpha I)X_t-\alpha E_t-\alpha\varepsilon_t,\\
 E_{t+1}&=CE_t+(C-I)(X_t+\varepsilon_t).
 \end{aligned}
\end{equation}
Put $\ell_-=(e_1-e_2)/2$ and define
\[
 x_t^-=(-1)^t\ell_-^\top X_t,\qquad e_t^-=(-1)^t\ell_-^\top E_t,\qquad \varepsilon_t^-=(-1)^t\ell_-^\top\varepsilon_t.
\]
The mixing matrices satisfy
\[
 \ell_-^\top R_t=-\frac18\ell_-^\top,\qquad \ell_-^\top C=-\frac14\ell_-^\top,\qquad \ell_-^\top(C-I)=-\frac54\ell_-^\top.
\]
Multiplying both equations in \eqref{eq:example-residual} by $(-1)^{t+1}\ell_-^\top$ gives
\begin{equation}\label{eq:example-contrast}
 \begin{bmatrix}x_{t+1}^-\\e_{t+1}^-\end{bmatrix}=H_\alpha\begin{bmatrix}x_t^-\\e_t^-\end{bmatrix}+\begin{bmatrix}\alpha\\5/4\end{bmatrix}\varepsilon_t^-,\qquad H_\alpha=\begin{bmatrix}1/8+\alpha&\alpha\\5/4&1/4\end{bmatrix},\qquad x_0^-=e_0^-=0.
\end{equation}
The variables $\varepsilon_t^-$ are independent across time, have mean zero, and satisfy $\E[(\varepsilon_t^-)^2]=\|\ell_-\|^2=1/2$.

Since $\|\ell_-\|^2=1/2$, the matrix $P_-:=2\ell_-\ell_-^\top$ is the orthogonal projection onto $\operatorname{span}\{\ell_-\}$. Let $(X_t^{(-)},E_t^{(-)})$ solve \eqref{eq:example-residual} with noise $P_-\varepsilon_t$ and zero initial states, and write $\bar x_t^{(-)}=u^\top X_t^{(-)}$. The identity $\ell_-^\top P_-=\ell_-^\top$ and the recursion \eqref{eq:example-contrast} imply
\[
 (-1)^t\ell_-^\top X_t^{(-)}=x_t^-,\qquad (-1)^t\ell_-^\top E_t^{(-)}=e_t^-.
\]
Moreover,
\[
 R_t\one=C\one=\one,\qquad R_t\ell_-=-\frac18\ell_-+\frac{(-1)^t}{4}\one,\qquad C\ell_-=-\frac14\ell_-.
\]
Thus $X_t^{(-)}\in\operatorname{span}\{\one,\ell_-\}$ and $E_t^{(-)}\in\operatorname{span}\{\ell_-\}$ for every $t$, and
\[
 X_t^{(-)}=\one\bar x_t^{(-)}+2(-1)^t\ell_-x_t^-,\qquad E_t^{(-)}=2(-1)^t\ell_-e_t^-.
\]
Writing $k=1-\alpha$ and using $u^\top P_-=0$, we obtain from the first equation in \eqref{eq:example-residual}
\begin{equation}\label{eq:example-projected-average}
 \bar x_{t+1}^{(-)}=k\bar x_t^{(-)}+\frac12x_t^-,\qquad \bar x_0^{(-)}=0.
\end{equation}

Let $\bar x_t^{(\perp)}$ be the average response from zero initial states to the complementary noise $(I-P_-)\varepsilon_t$. The noise-free solution is $X_t=k^t\one$, $E_t=0$, so linearity gives
\[
 \bar x_t=k^t+\bar x_t^{(-)}+\bar x_t^{(\perp)}.
\]
The node noises have covariance $I$ and are independent across time. Consequently, for all $r,s\ge0$,
\[
 \E[(P_-\varepsilon_r)((I-P_-)\varepsilon_s)^\top]=0.
\]
Both random responses are linear in the corresponding noises and have mean zero. Their covariance vanishes, and hence
\begin{equation}\label{eq:example-variance-lower}
 \begin{aligned}
 \E[\bar x_t^2]&=k^{2t}+\E[(\bar x_t^{(-)})^2]+\E[(\bar x_t^{(\perp)})^2]\\
 &\ge k^{2t}+\E[(\bar x_t^{(-)})^2].
 \end{aligned}
\end{equation}

Define the response coefficients
\begin{equation}\label{eq:example-minus-response}
 h_{-,j}:=(1,0)H_\alpha^j\begin{bmatrix}\alpha\\5/4\end{bmatrix},\qquad j\ge0.
\end{equation}
Expanding \eqref{eq:example-contrast} from its zero initial state gives
\[
 x_t^-=\sum_{r=0}^{t-1}h_{-,t-1-r}\varepsilon_r^-.
\]
Substituting this expression into \eqref{eq:example-projected-average} and collecting the coefficient of each noise yields, for $t\ge2$,
\begin{equation}\label{eq:example-noise-expansion}
 \bar x_t^{(-)}=\frac12\sum_{r=0}^{t-2}\left(\sum_{j=0}^{t-r-2}k^{t-r-2-j}h_{-,j}\right)\varepsilon_r^-.
\end{equation}
For $0\le\alpha\le1$, the matrix $H_\alpha$ and the scalar $k$ are nonnegative, so $h_{-,j}\ge0$ and $h_{-,0}=\alpha$. Independence across time gives
\[
 \begin{aligned}
 \E[(\bar x_t^{(-)})^2]&=\frac18\sum_{r=0}^{t-2}\left(\sum_{j=0}^{t-r-2}k^{t-r-2-j}h_{-,j}\right)^2\\
 &\ge\frac{\alpha^2}{8}\sum_{r=0}^{t-2}k^{2(t-r-2)}=\frac{\alpha^2}{8}\sum_{j=0}^{t-2}k^{2j}.
 \end{aligned}
\]
Combining this estimate with \eqref{eq:example-variance-lower} and averaging over $t$ yields
\begin{equation}\label{eq:example-path-lower}
 \mathcal A_{n,T}(\alpha)\ge\frac1T\sum_{t=0}^{T-1}k^{2t}+\frac{\alpha^2}{8T}\sum_{t=2}^{T-1}\sum_{j=0}^{t-2}k^{2j},\qquad 0\le\alpha\le1.
\end{equation}

For $0\le\alpha\le T^{-1/2}$, Bernoulli's inequality gives $k^t\ge1-t\alpha\ge1/2$ for $0\le t\le\lfloor\sqrt T/2\rfloor$. Therefore,
\[
 \mathcal A_{n,T}(\alpha)\ge\frac{\lfloor\sqrt T/2\rfloor+1}{4T}\ge\frac1{8\sqrt T}.
\]
For $T^{-1/2}<\alpha\le1$ and $\lceil T/2\rceil\le t\le T-1$, the assumption $T\ge256$ gives
\[
 \begin{aligned}
 k^{2(t-1)}&\le e^{-2\alpha(t-1)}\le e^{-(T-2)/\sqrt T}\le\frac12,\\
 \sum_{j=0}^{t-2}k^{2j}&=\frac{1-k^{2(t-1)}}{\alpha(2-\alpha)}\ge\frac1{4\alpha}.
 \end{aligned}
\]
There are $\lfloor T/2\rfloor\ge T/4$ such indices. The second term in \eqref{eq:example-path-lower} therefore gives
\[
 \mathcal A_{n,T}(\alpha)\ge\frac{\alpha^2}{8T}\frac T4\frac1{4\alpha}=\frac\alpha{128}>\frac1{128\sqrt T}.
\]
For $\alpha>1$, the expansion \eqref{eq:example-noise-expansion} still holds. The coefficient of $\varepsilon_{t-2}^-$ is exactly $\alpha/2$, and the remaining terms are $\cF_{t-2}^-$-measurable. Hence
\[
 \begin{gathered}
 \operatorname{Var}(\bar x_t^{(-)}\mid\cF_{t-2}^-)=\frac{\alpha^2}{8},\\
 \E[(\bar x_t^{(-)})^2]\ge\E[\operatorname{Var}(\bar x_t^{(-)}\mid\cF_{t-2}^-)]=\frac{\alpha^2}{8}.
 \end{gathered}
\]
Using \eqref{eq:example-variance-lower}, we obtain
\[
 \mathcal A_{n,T}(\alpha)\ge\frac{T-2}{T}\frac{\alpha^2}{8}\ge\frac1{16}.
\]
The three cases prove the lower bound for every $\alpha\ge0$. The upper bound at $\alpha=1/(4\sqrt T)$ is proved in Appendix~\ref{app:uniform-example-upper}.
\end{proof}
}

For comparison, take $R_t=C_t=C$ from \eqref{eq:uniform-network} on
the same objective and oracle.  Since $C$ is doubly stochastic and
$\one^\top Y_t=\one^\top G_t$, the average satisfies
$\bar x_{t+1}=(1-\alpha)\bar x_t-\alpha u^\top\varepsilon_t$.
Geometric summation gives
$\mathcal A^{\rm stat}_{n,T}(\alpha)\le1/(\alpha T)+\alpha/n$
for $0<\alpha\le1$.  Choosing $\alpha=\sqrt{n/T}$ yields
\begin{equation}\label{eq:example-static-comparison}
 \inf_{\alpha\ge0}\mathcal A^{\rm stat}_{n,T}(\alpha)
 \le\frac2{\sqrt{nT}},\qquad
 \frac{\inf_{\alpha\ge0}\mathcal A_{n,T}(\alpha)}
 {\inf_{\alpha\ge0}\mathcal A^{\rm stat}_{n,T}(\alpha)}
 \ge\frac{\sqrt n}{256},\qquad T\ge\max\{n,256\}.
\end{equation}
In the periodic network, the alternating weights transmit the noise
difference between nodes one and two into the average.  Its variance
does not decrease with $n$.
The next section introduces phase weights to describe how these noise
contributions accumulate on a general periodic network.

\section{Preliminaries}\label{sec:preliminary}
We first establish network properties needed to characterize $\neff$, starting with the left and right Perron vectors. Assumption \ref{ass:network} implies that $R_{p-1:0}$ is row-stochastic and primitive. By Perron-Frobenius' theorem, there exists a unique $\pi_0\in[0,1]^n$ such that $\pi_0^\top\one=1$ and $\pi_0^\top R_{p-1:0}=\pi_0^\top$. With $\pi_r^\top=\pi_0^\top R_{p-1:r}$ and $\pi_{kp+r}=\pi_r$ for $k\in\mathbb{N}$ and $0\le r<p$, the following relations hold for every $t\ge0$:
\begin{align}
    \pi_{t+p}=\pi_t,\ \pi_t>0,\ \pi_t^\top\one=1,\ \text{and}\ \pi_t^\top=\pi_{t+1}^\top R_t.\label{eq:perron-left}
\end{align}
Similarly, $C_{p-1:0}$ is column-stochastic and primitive, and there exists $v_0\in[0,1]^n$ such that $\one^\top v_0=1$ and $C_{p-1:0}v_0=v_0$. With $v_r=C_{r-1:0}v_0$ and $v_{kp+r}=v_r$ for $k\in\mathbb{N}$ and $0\le r<p$, the following relations hold for every $t\ge0$:
\begin{align}
    v_{t+p}=v_t,\ v_t>0,\ \one^\top v_t=1,\ \text{and}\ v_{t+1}=C_tv_t.\label{eq:perron-right}
\end{align}
The left and right Perron vectors are frequently used in the convergence theory, including the following mixing bounds for products of $R_t$ and $C_t$.
\begin{lemma}[Uniform mixing]\label{lem:mixing}
Set
\begin{equation*}
 \tilde{n}=\max\{1,n-1\},\qquad\delta=\max\{1/2,1-nw_0^{\tilde{n}}\},\qquad r_\star=\delta^{1/\tilde{n}}<1,
\end{equation*}
and $M_C=\sqrt n/\delta$, $M_R=2\sqrt n/\delta$. Under Assumption~\ref{ass:network}, the following bounds hold for every $t\ge s$, with $\Pi:=I-\frac{1}{n}\one\one^\top$:
\begin{align}
 \norm{C_{t-1:s}z}&\leq M_Cr_\star^{t-s}\norm z,&&\one^\top z=0,\label{eq:C-mixing}\\
 \norm{\Pi R_{t-1:s}x}&\leq M_Rr_\star^{t-s}\norm{\Pi x},&& \forall x\in\mathbb{R}^n,\label{eq:R-mixing} \\
 \norm{R_{t-1:s}x}&\leq M_R r_\star^{t-s}\|x\|, && \pi_s^\top x=0.\label{eq:R-mixing-variant}
\end{align}
The same inequalities apply columnwise to matrix states.
\end{lemma}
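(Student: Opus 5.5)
The plan is the classical ergodicity-coefficient argument for products of stochastic matrices, applied to windows of length $\tilde n$ and then extended to arbitrary lengths.

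\textbf{Step 1 (positivity of windowed products).} Fix any $s$ and consider the product $R_{s+\tilde n-1:s}$ of $\tilde n$ consecutive matrices. Each factor has positive diagonal and a strongly connected digraph, with all nonzero entries at least $w_0$. Because the diagonals are positive, the set of nodes reachable from $j$ in the product digraph grows by at least one node per factor until it covers everything. Hence after $\tilde n=n-1$ factors every entry of the product is positive, and in fact bounded below by $w_0^{\tilde n}$. The same holds for $C_{s+\tilde n-1:s}$. The case $n=1$ is trivial.

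\textbf{Step 2 (contraction for $C$ on the zero-sum subspace).} Let $B=C_{s+\tilde n-1:s}$. It is column stochastic and every entry is at least $w_0^{\tilde n}$. For $\one^\top z=0$, the standard Dobrushin/Seneta bound gives
\[
\|Bz\|_1\le\Bigl(1-\sum_i\min_j B_{ij}\Bigr)\|z\|_1\le(1-nw_0^{\tilde n})\|z\|_1\le\delta\|z\|_1 .
\]
This uses the fact that $B$ is column stochastic, so the relevant coefficient is $\tfrac12\max_{j,k}\|B_{:j}-B_{:k}\|_1$. Iterating, a product of length $t-s=k\tilde n+\rho$ with $0\le\rho<\tilde n$ contracts $\|\cdot\|_1$ by $\delta^{k}$. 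The leftover factors are nonexpansive in $\|\cdot\|_1$ because they are column stochastic, and zero-sum is preserved. Since $\delta^k\le\delta^{-1}r_\star^{t-s}$ and $\|\cdot\|\le\|\cdot\|_1\le\sqrt n\|\cdot\|$, this yields \eqref{eq:C-mixing} with $M_C=\sqrt n/\delta$.

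\textbf{Step 3 (row-stochastic bounds).} For $R$, I will use the oscillation seminorm $\osc(x)=\max_i x_i-\min_i x_i$. Any row-stochastic $A$ with all entries at least $w_0^{\tilde n}$ satisfies $\osc(Ax)\le(1-nw_0^{\tilde n})\osc(x)\le\delta\osc(x)$, and every row-stochastic matrix is nonexpansive in $\osc$. This gives $\osc(R_{t-1:s}x)\le\delta^{-1}r_\star^{t-s}\osc(x)$. We also have $\|\Pi y\|\le\sqrt n\,\osc(y)$ and $\osc(y)=\osc(\Pi y)\le2\|\Pi y\|_\infty\le2\|\Pi y\|$, which gives \eqref{eq:R-mixing} with $M_R=2\sqrt n/\delta$.

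For \eqref{eq:R-mixing-variant}, suppose $\pi_s^\top x=0$ and set $y=R_{t-1:s}x$. By \eqref{eq:perron-left}, $\pi_t^\top y=\pi_s^\top x=0$. Since $\pi_t$ is a probability vector, $0$ lies between $\min y$ and $\max y$, so $\|y\|_\infty\le\osc(y)$. Therefore
\[
\|y\|\le\sqrt n\,\osc(y)\le\sqrt n\,\delta^{-1}r_\star^{t-s}\osc(x)\le 2\sqrt n\,\delta^{-1}r_\star^{t-s}\|x\|,
\]
using $\osc(x)\le2\|x\|_\infty\le2\|x\|$. The columnwise statements for matrix states follow by applying these bounds to each column and summing squares.

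\textbf{Main obstacle.} The only delicate point is Step 1: proving the uniform entry lower bound $w_0^{\tilde n}$ for arbitrary time-varying factors. This needs the positive-diagonal "lazy path" argument, where each path of length at most $n-1$ is padded with self-loops, so every entry dominates a product of $\tilde n$ entries each at least $w_0$. The rest is bookkeeping of the constants, including using $\delta\ge1/2$ only to keep $r_\star$ well defined and below $1$.
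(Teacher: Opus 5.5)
Your proposal is correct and follows the paper's proof essentially step for step: the same temporal reachable-set argument gives entries at least $w_0^{\tilde n}$ on every window of length $\tilde n$, your Dobrushin-coefficient bound is the paper's decomposition $\beta\one\one^\top+(1-n\beta)\widetilde C_s$ in different language, and the oscillation and Perron-sign arguments for \eqref{eq:R-mixing} and \eqref{eq:R-mixing-variant} match with identical constants. One small correction to your closing remark: $r_\star<1$ already follows from $w_0>0$. The floor $\delta\ge1/2$ is what keeps $\delta^{-1}$, and hence $M_C$ and $M_R$, finite, since $1-nw_0^{\tilde n}$ can vanish (e.g.\ $n=2$, $w_0=1/2$).
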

\begin{proof}
For $n=1$ the claim is immediate. Assume $n\ge2$ and put $\beta:=w_0^{\tilde n}$. Positive diagonals and strong connectivity imply that every block $C_{s+\tilde n-1:s}$ and $R_{s+\tilde n-1:s}$ has all entries at least $\beta$: a temporal reachable set persists by the self-loops and gains at least one node at each step until it is full. For the column-stochastic block, write $C_{s+\tilde n-1:s}=\beta\one\one^\top+(1-n\beta)\widetilde C_s$ with $\widetilde C_s$ column stochastic. Hence, for $\one^\top z=0$,
\[
\|C_{s+\tilde n-1:s}z\|_1\le (1-n\beta)\|z\|_1\le\delta\|z\|_1.
\]
For the row-stochastic block, the same decomposition into $\beta\one\one^\top$ and a row-stochastic remainder gives
\[
\operatorname{osc}(R_{s+\tilde n-1:s}x)\le(1-n\beta)\operatorname{osc}(x)\le\delta\operatorname{osc}(x),
\]
where $\osc(x):=\max_ix_i-\min_ix_i$. If $t-s=k\tilde n+r$ with $0\le r<\tilde n$, the remaining $r$ factors are nonexpansive in the relevant norm.  Thus
\[
\|C_{t-1:s}z\|\le\|C_{t-1:s}z\|_1\le\delta^k\|z\|_1\le\sqrt n\delta^k\|z\|
\]
for zero-sum $z$, and
\[
\|\Pi R_{t-1:s}x\|\le\sqrt{n}\osc(R_{t-1:s}x)\le\sqrt{n}\delta^k\osc(x)\le2\sqrt n\delta^k\|\Pi x\|.
\]
Since $\delta^k\le\delta^{-1}r_\star^{t-s}$, these are the first two claims. Finally, if $\pi_s^\top x=0$, invariance of the left Perron vectors implies $\min_i(R_{t-1:s}x)_i\le0\le\max_i(R_{t-1:s}x)_i$, hence
\[
\|R_{t-1:s}x\|\le\sqrt n\operatorname{osc}(R_{t-1:s}x)\le\sqrt{n}\delta^k\operatorname{osc}(x)\le2\sqrt{n}\delta^k\|x\|,
\]
which proves the last claim.  The matrix statements follow columnwise.
\end{proof}
{\color{black}For $W\in\R^{n\times m}$ with $\one^\top W=0$, Lemma~\ref{lem:mixing} guarantees convergence of the series}
\begin{equation}\label{eq:Dt}
{\color{black}\mathcal{D}_t(W):=\sum_{j=0}^{\infty}W^\top C_{t+j-1:t}^\top\pi_{t+j+1}\in\R^m.}
\end{equation}
This operator describes how a stochastic-gradient perturbation feeds back into the aggregate model state through future phases. By definition, $\mathcal{D}_t$ is also $p$-periodic, and satisfies the following Poisson identity:
\begin{equation}\label{eq:poisson}
 {\color{black}\mathcal{D}_t(W)=W^\top\pi_{t+1}+\mathcal{D}_{t+1}(C_tW).}
\end{equation}

\begin{definition}[Phase weights]\label{def:br}
At phase $r=t\bmod p$, a perturbation to $G_t$ changes $X_{t+1}$ directly and affects later model updates through the tracker. Define the phase weight for $r\in\{0,\ldots,p-1\}$ by
\begin{equation}\label{eq:br-D}
{\color{black}b_r:=\pi_{r+1}+\mathcal{D}_{r+1}(C_r-I).}
\end{equation}
Equivalently,
\begin{equation}\label{eq:br-direct}
{\color{black}b_r=\pi_{r+1}+\sum_{j=1}^{\infty}(C_r-I)^\top C_{r+j-1:r+1}^\top\pi_{r+j+1}.}
\end{equation}
\end{definition}
{\color{black}We extend the phase weights periodically by $b_t:=b_{t\bmod p}$ for every $t\ge0$.}
The first term in \eqref{eq:br-direct} is the immediate aggregate response. The remaining terms collect subsequent tracker responses to the same perturbation. {\color{black}Substituting these phase weights into \eqref{eq:intro-h-q} and \eqref{eq:intro-neff} defines \textcolor{black}{the average descent gain $h$}, the noise energy $q$, and the effective node count $\neff$.}  {\color{black}
\textcolor{black}{Lemma~\ref{lem:neff} gives}
$h=\frac{n}{p}\sum_{r=0}^{p-1}\pi_{r+1}^\top v_r>0$.
Consequently $q>0$, and Cauchy--Schwarz gives
$0<\neff\le n$, with equality if and only if all $pn$ entries
$\{(b_r)_i\}_{r,i}$ are identical.
Equation~\eqref{eq:average-b} shows that equality holds when $p=1$.
}

{\color{black}
\begin{remark}[Effective node count of the period-two networks]
\label{rem:uniform-example-neff}
For the network in Theorem~\ref{thm:periodic-counterexample}, let $u=\one/n$, $J=\one u^\top$, and $\ell_-=(e_1-e_2)/2$. Set
\[
 a=\frac{e_1+e_2}{4},\qquad w=(I-C/2)^{-1}a=\sum_{j=0}^{\infty}2^{-j}C^ja.
\]
Then $w=Cw/2+a$ and $\one^\top w=1$. Using $C\ell_-=-\ell_-/4$, the vectors
\[
 \pi_r=w+(-1)^r\frac47\ell_-,\qquad r=0,1,
\]
satisfy $\one^\top\pi_r=1$ and
\[
 \begin{aligned}
 \pi_{r+1}^\top R_r&=\frac12w^\top C+(-1)^r\frac1{14}\ell_-^\top+a^\top+(-1)^r\frac12\ell_-^\top\\
 &=\pi_r^\top.
 \end{aligned}
\]
By uniqueness, these are the normalized phase Perron vectors. Since $\one^\top(w-u)=0$, define
\[
 z=(I-C+J)^{-1}(w-u)=\sum_{j=0}^{\infty}C^j(w-u),\qquad (I-C)z=w-u,\qquad\one^\top z=0.
\]
For $\one^\top W=0$, the series \eqref{eq:Dt} becomes
\[
 \begin{aligned}
 \mathcal D_r(W)&=\sum_{j=0}^{\infty}W^\top(C^j)^\top(w-u)+(-1)^{r+1}\frac47\sum_{j=0}^{\infty}(-1)^jW^\top(C^j)^\top\ell_-\\
 &=W^\top\left(z+(-1)^{r+1}\frac{16}{21}\ell_-\right).
 \end{aligned}
\]
Substitution into \eqref{eq:br-D} gives
\[
 \begin{aligned}
 b_r&=\pi_{r+1}+(C-I)\left(z+(-1)^r\frac{16}{21}\ell_-\right)\\
 &=u+(-1)^{r+1}\frac{32}{21}\ell_-=u+(-1)^{r+1}\frac{16}{21}(e_1-e_2).
 \end{aligned}
\]
In particular, $\one^\top b_r=1$ and $\|b_r\|^2=1/n+512/441$. Therefore,
\begin{equation}\label{eq:uniform-example-neff}
 h=1,\qquad q=\frac1n+\frac{512}{441},\qquad\neff=\frac{441n}{441+512n},\qquad\lim_{s\to\infty}\neff=\frac{441}{512}.
\end{equation}
\end{remark}
}

\begin{remark}[Calculation of $\neff$]\label{rem:finite-evaluation}
Given the periodic weights $\{(R_r,C_r)\}_{r=0}^{p-1}$, we can compute the normalized left Perron vector $\pi_0$ of $R_{p-1:0}$, satisfying $R_{p-1:0}^\top\pi_0=\pi_0$ and $\one^\top\pi_0=1$. The remaining $\pi_t$ follow by definition. For $n\ge2$, let the columns of $U\in\R^{n\times(n-1)}$ form an orthonormal basis of $\one^\perp$, and set $\widehat C_r=U^\top C_rU$. Solve the following cyclic linear system:
\begin{equation}\label{eq:finite-response}
 d_r-\widehat C_r^\top d_{r+1}=U^\top\pi_{r+1},\qquad r=0,\ldots,p-1,\qquad d_p=d_0.
\end{equation}
This gives vectors $d_r\in\R^{n-1}$ satisfying $d_{r+p}=d_r$ and ${\color{black}\mathcal D_r(Uz)=z^\top d_r}$ for every $z\in\R^{n-1}$. The phase weights are then recovered as
\begin{equation}\label{eq:finite-weights}
 b_r=\pi_{r+1}+(C_r-I)^\top U d_{r+1}.
\end{equation}
The quantities $h,q,\neff$ follow from
\eqref{eq:intro-h-q}-\eqref{eq:intro-neff}.

To justify this procedure, column stochasticity implies that $C_r$ preserves $\one^\perp$, so $C_rU=U\widehat C_r$. Applying the Poisson identity \eqref{eq:poisson} to $Uz$ therefore gives
\[
 {\color{black}z^\top d_r=z^\top U^\top\pi_{r+1}+z^\top\widehat C_r^\top d_{r+1},\qquad z\in\R^{n-1},}
\]
which yields \eqref{eq:finite-response}. To establish uniqueness, let $\{d_r\}$ and $\{\widetilde d_r\}$ be two solutions of \eqref{eq:finite-response}, and set $\delta_r=d_r-\widetilde d_r$. Then
\[
\delta_r=\widehat C_r^\top\delta_{r+1},\qquad r=0,\ldots,p-1,\qquad \delta_p=\delta_0.
\]
With $\widehat P:=\widehat C_{p-1}\cdots\widehat C_0$, successive substitution gives
\[
 \delta_0=\widehat C_0^\top\cdots\widehat C_{p-1}^\top\delta_0
          =\widehat P^\top\delta_0.
\]
By periodicity and Lemma~\ref{lem:mixing},\[
 \|\widehat P^{\,k}\|_2=\|U^\top C_{kp-1:0}U\|_2\le M_Cr_\star^{kp},\qquad k\ge1.
\]
Since $\rho(\widehat P)\le r_\star^p<1$, the matrix
$I-\widehat P^\top$ is invertible. Consequently,
\[
 \delta_0=0,\qquad\delta_r=\widehat C_r^\top\cdots\widehat C_{p-1}^\top\delta_p=0,\qquad 0\le r<p.
\]
Thus \eqref{eq:finite-response} has a unique solution, equal to the coordinates of the response functionals defined in \eqref{eq:Dt}. Finally, each column of $C_r-I$ lies in $\one^\perp$, and hence
\[
 {\color{black}\mathcal D_{r+1}(C_r-I)=(C_r-I)^\top U d_{r+1}.}
\]
{\color{black}Substituting this identity into \eqref{eq:br-D} yields} \eqref{eq:finite-weights}. The procedure therefore computes the effective node count defined by the infinite series. 
\end{remark}

\section{Sharp characterization of the leading stochastic term}
\label{sec:main-results}

We now state the upper bound and its matching quadratic lower bound.
Their proofs are given in Sections~\ref{sec:upper-proof} and
\ref{sec:lower}, respectively.

\begin{theorem}[Upper bound]\label{thm:upper}
Suppose Assumptions~\ref{ass:objective} and \ref{ass:network} hold. If we choose the stepsize $\alpha$ properly \textcolor{black}{(specified in Corollary~\ref{cor:upper})}, it holds that
\begin{equation}\label{eq:upper-main}
\mathcal{A}_T=\mathcal{O}\left(\sigma\sqrt{\frac{L\Delta}{\neff T}}+\frac{\tilde{K}_0(\mathcal{N},L\Delta,H_0)}{T}\right),
\end{equation}
where $H_0:=\|(I-v_0\one^\top)Z_0\|_F$ is an initial term, and $\tilde{K}_0$ depends only on $\mathcal{N},L\Delta,H_0$.
\end{theorem}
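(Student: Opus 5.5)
We will prove the bound by following a weighted average rather than the arithmetic average $\bar x_t$. The descent lemma is applied to a corrected sequence whose one-step update is driven, up to errors, by $-\alpha b_t^\top G_t$, so that the phase weights of Definition~\ref{def:br} appear directly.

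\textbf{Step 1: the corrected average.} Set
\[
 \hat x_t:=X_t^\top\pi_t+\alpha\,\mathcal D_t(E_t),
\]
which is well defined because $\one^\top E_t=0$ for all $t$, by $E_0=0$ and column stochasticity. Using $\pi_{t+1}^\top R_t=\pi_t^\top$ from \eqref{eq:perron-left}, the residual recursion
\[
 E_{t+1}=C_tE_t+(C_t-I)G_t ,
\]
and the Poisson identity \eqref{eq:poisson}, the terms in $E_t$ cancel exactly. This leaves the exact recursion
\[
 \hat x_{t+1}=\hat x_t-\alpha\,G_t^\top b_t .
\]
So $\hat x$ performs SGD along the aggregate direction $Z_t^\top b_t$ with noise $\varepsilon_t^\top b_t$. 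The noise has conditional second moment at most $\sigma^2\|b_t\|^2$ by \eqref{eq:matrix-noise}.

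\textbf{Step 2: descent over one period.} Apply $L$-smoothness to $f(\hat x_t)$. Write
\[
 Z_t^\top b_t=(b_t^\top\one)\nabla f(\hat x_t)+\text{error},
\]
where the error is bounded by $L\|b_t\|_1$ times the consensus error $\max_i\|x_{i,t}-\hat x_t\|$. The coefficient $b_t^\top\one$ may vary with the phase, and its per-phase value need not be positive. We therefore sum the descent inequality over blocks of $p$ consecutive iterations. Within a block, $\nabla f(\hat x_t)$ changes by $O(L\alpha p)$ times the gradient magnitudes, which is a higher-order term. Summing phases gives the factor $p\,h$ multiplying $\|\nabla f(\hat x_{kp})\|^2$, and the noise variance sums to $\alpha^2Lp\,q\sigma^2/2$. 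Telescoping then yields
\[
 \frac1T\sum_t\E\|\nabla f\|^2\lesssim \frac{\Delta}{\alpha hT}+\frac{L\alpha q\sigma^2}{h}+(\text{consensus terms})+O(\alpha^2)\text{ terms}.
\]

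\textbf{Step 3: consensus and tracking errors.} Bound $\|\Pi X_t\|$ and $\|(I-v_t\one^\top)Y_t\|$ using Lemma~\ref{lem:mixing}, via a standard coupled linear system over windows of length $\tilde n$. This gives averaged consensus errors of order
\[
 \alpha^2\Big(\sigma^2+\frac{1}{T}\sum\|\nabla f\|^2\Big)+\frac{\alpha^2H_0^2}{T},
\]
with network-dependent constants. The same bounds control $\|\bar x_t-\hat x_t\|$ and $\alpha\|\mathcal D_t(E_t)\|$, which converts the result from $\hat x_t$ back to $\bar x_t$. Taking $\alpha$ below a network-dependent threshold absorbs the gradient-feedback terms. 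The remaining error is $O(\alpha^2\sigma^2)$, which is lower order than the leading term $\alpha q\sigma^2 L/h$ once $\alpha\sim T^{-1/2}$.

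\textbf{Step 4: stepsize choice.} Choose
\[
 \alpha=\min\Big\{\alpha_{\max}(\mathcal N,L),\ \sqrt{\Delta h/(Lq\sigma^2T)}\,\big/\,h\Big\}.
\]
Balancing the two leading terms gives $\sigma\sqrt{L\Delta q/(h^2T)}=\sigma\sqrt{L\Delta/(\neff T)}$. The remaining $O(\alpha^2)$ and initial terms are collected into $\tilde K_0/T$.

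\textbf{Main obstacle.} The hardest step is Step 2 combined with Step 3. The descent gain $b_t^\top\one$ is only positive on average, and $\hat x_t$ involves the infinite series $\mathcal D_t$. The period-block argument must therefore show that all cross-phase discrepancies are $O(\alpha^2)$ rather than $O(\alpha)$. If it does not, the constant in front of the leading term degrades and the result no longer identifies $\neff$ sharply.
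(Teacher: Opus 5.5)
Your architecture matches the paper's: an auxiliary sequence built from the Poisson identity, descent over period blocks, coupled consensus bounds, and stepsize balancing. Your $\hat x_t$ is the paper's $\nu_t$ from the proof of Lemma~\ref{lem:prop-omega}, up to a sign. The exact recursion $\nu_{t+1}=\nu_t-\alpha G_t^\top b_t$ holds for $\nu_t=X_t^\top\pi_t-\alpha\mathcal{D}_t(E_t)$; with $+\alpha\mathcal{D}_t(E_t)$ the $E_t$-terms do not cancel. The genuine gaps both come from heterogeneous $f_i$, which Assumption~\ref{ass:objective} does not restrict.

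\textbf{Gap in Step 2.} Your claim that $Z_t^\top b_t-(b_t^\top\one)\nabla f(\hat x_t)$ is bounded by $L\|b_t\|_1\max_i\|x_{i,t}-\hat x_t\|$ is false. The difference contains $\sum_i(b_t)_i[\nabla f_i(\hat x_t)-\nabla f(\hat x_t)]$, which vanishes only if $b_t\propto\one$. Summing over a period removes this bias only if $\frac1p\sum_r b_r=\frac hn\one$ holds as a vector identity. The scalar relation $\sum_r b_r^\top\one=ph$ you invoke is just the definition of $h$. Without the vector identity, the iteration would drive $\sum_i\bar b_i\nabla f_i$ to zero rather than $\nabla f$. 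This identity is Lemma~\ref{lem:neff}, which you neither state nor prove. The paper proves it by feeding the constant input $G_t\equiv W$ and computing the long-run average of $Y_t^\top\pi_{t+1}$ in two ways. It then handles the phase-varying weights by Abel summation with $u_{p-1}=\zero$, which bounds the phase discrepancy by the movement $\mathcal M_N$.

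\textbf{Gap in Step 3.} You assert that the consensus and tracking bounds also control $\alpha\|\mathcal{D}_t(E_t)\|$. But $E_t=(I-v_t\one^\top)Y_t-(I-v_t\one^\top)G_t$, and the second term contains $(I-v_t\one^\top)Z_t$, which has the size of the local gradients themselves. It does not vanish at stationary points of $f$ when the $f_i$ differ, and nothing in the assumptions bounds it. Hence neither $\hat x_t-\bar x_t$ nor $\max_i\|x_{i,t}-\hat x_t\|$ is small, and both the descent step and the conversion back to $\bar x_t$ break. With an added bounded-dissimilarity assumption your route would go through, but $\tilde K_0$ would then depend on that dissimilarity constant, not only on $H_0$ as the theorem states.

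The paper avoids this by correcting only the noise part of the residual, $\omega_k=X_{s_k}^\top\pi_0-\alpha\mathcal D_0(E^\varepsilon_{s_k})$. Then $\omega_k-X_{s_k}^\top\pi_0$ is pure noise with second moment at most $\alpha^2V_\tau\sigma^2$. The noise still enters the recursion as the martingale term $\zeta_k$ with second moment at most $pq\sigma^2$, which is what produces $q$.

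The gradient part is carried instead as the increment error $\rho_k=\mathcal D_0(E^Z_{s_{k+1}}-E^Z_{s_k})$. The quantity $\Theta^Z_t=E^Z_t+(I-v_t\one^\top)Z_t$ satisfies $\Theta^Z_{t+1}=C_t\Theta^Z_t+(I-v_{t+1}\one^\top)(Z_{t+1}-Z_t)$. So $\rho_k$ involves only $H_0$ and gradient increments, hence $\mathcal M_N$ by smoothness. This is exactly why $H_0$ is the only initial quantity appearing in $\tilde K_0$.
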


\begin{theorem}[Matching lower bound]\label{thm:lower-preview}
Fix any network satisfying Assumption~\ref{ass:network}.  Let $f_i(x)=Lx^2/2$ for $L>0$, and let all node-time noises be independent, mean zero, and have variance exactly $\sigma^2>0$. Under the common initialization with $x_0\ne0$, $\Delta=L x_0^2/2$.  As $T\to\infty$, it holds that
\begin{equation}\label{eq:lower-main}
 \inf_{\alpha\geq0}\mathcal A_T
 =\sigma\sqrt{\frac{2L\Delta}{\neff T}}
 +o(T^{-1/2}).
\end{equation}
\end{theorem}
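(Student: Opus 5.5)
The plan is to reduce the linear recursion to a scalar "corrected aggregate" whose dynamics involve the phase weights $b_t$ exactly. Then we compute its bias and variance to leading order in $\alpha$ and optimize over $\alpha$. Throughout, $d=1$ and $G_t=LX_t+\varepsilon_t$, so $E_{t+1}=C_tE_t+(C_t-I)G_t$ with $E_0=0$ and $\one^\top E_t=0$.

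\textbf{Step 1 (exact scalar recursion).} Define $\tilde x_t:=\pi_t^\top X_t-\alpha\,\mathcal D_t(E_t)$. Using $\pi_t^\top=\pi_{t+1}^\top R_t$ from \eqref{eq:perron-left}, the Poisson identity \eqref{eq:poisson} in the form $\mathcal D_{t+1}(C_tE_t)=\mathcal D_t(E_t)-E_t^\top\pi_{t+1}$, and linearity of $\mathcal D_{t+1}$, the $E_t$ terms cancel. By Definition~\ref{def:br} this leaves
\[
 \tilde x_{t+1}=\tilde x_t-\alpha\, b_t^\top G_t
 =\bigl(1-\alpha L\,b_t^\top\one\bigr)\tilde x_t-\alpha L\,b_t^\top(X_t-\one\tilde x_t)-\alpha\, b_t^\top\varepsilon_t .
\]
The middle term is a consensus and correction error. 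By Lemma~\ref{lem:mixing}, together with $X_0=\one x_0$ and $E_0=0$, both $X_t-\one\pi_t^\top X_t$ and $E_t$ are driven only by $O(\alpha)$ and $O(1)$ noise injections that are mixed geometrically. Hence $\E[(X_t-\one\tilde x_t)_i^2]=O(\alpha^2x_0^2\rho^t+\alpha^2\sigma^2)$, and likewise $\bar x_t-\tilde x_t=O_{L^2}(\alpha)$. The same estimate gives $\E[\bar x_t^2]=\E[\tilde x_t^2]\,(1+O(\alpha))+O(\alpha^2\sigma^2)$.

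\textbf{Step 2 (period map and moments).} Composing $p$ steps of the scalar recursion gives a contraction factor $\prod_r(1-\alpha L b_r^\top\one)=1-\alpha Lph+O(\alpha^2)$. Over one period, the injected variance is $\alpha^2\sigma^2\sum_r\|b_r\|^2=\alpha^2\sigma^2pq$, using conditional independence across nodes and times and variance exactly $\sigma^2$. Cross terms with the consensus error are of relative order $\alpha$. Summing the geometric series therefore gives, for $\alpha\to0$ with $\alpha T\to\infty$,
\[
 \mathcal A_T=L^2\Bigl[\frac{x_0^2}{2\alpha LhT}+\frac{\alpha\sigma^2 q}{2Lh}\Bigr]\bigl(1+o(1)\bigr)+O\!\Bigl(\frac1T\Bigr).
\]
Minimizing $a/\alpha+b\alpha$ yields $2\sqrt{ab}=Lx_0\sigma\sqrt q/(h\sqrt T)=\sigma\sqrt{2L\Delta/(\neff T)}$, since $\Delta=Lx_0^2/2$. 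The choice $\alpha^\star\asymp T^{-1/2}$ attains this value, which gives the upper half of \eqref{eq:lower-main}.

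\textbf{Step 3 (lower half, uniform over all $\alpha\ge0$).} I split the stepsize range into regimes.
\begin{itemize}
\item For $\alpha\le \epsilon_T T^{-1/2}$ with $\epsilon_T\to0$ slowly, the bias alone is too large. Bernoulli's inequality keeps $\tilde x_t\ge x_0/2$ in mean for $t\lesssim1/\alpha$, and this already exceeds the target.
\item For $\alpha\ge K_TT^{-1/2}$ with $K_T\to\infty$, but $\alpha$ still below a small constant, the variance term $\alpha\sigma^2q/(2Lh)$ dominates.
\item For $\alpha$ bounded below, argue as in Theorem~\ref{thm:periodic-counterexample}. One step back, the coefficient of $\varepsilon_{t-1}$ in $\bar x_t$ is $-\alpha u^\top$, which gives the bound $\E[\bar x_t^2]\ge\alpha^2\sigma^2/n$, a constant. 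Alternatively, the solution is unstable and blows up.
\item For $\alpha\in[\epsilon_TT^{-1/2},K_TT^{-1/2}]$, Step 2 applies uniformly with errors $o(T^{-1/2})$.
\end{itemize}

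I expect the main obstacle to be making Step 2 uniform: controlling the cross terms between $\tilde x_t$ and the consensus error, and the $O(\alpha^2)$ correction to the period contraction, uniformly over this window with explicit $o(T^{-1/2})$ remainders. The first thing I would try is to write the full state as a linear time-periodic system. I would then bound the off-aggregate block via Lemma~\ref{lem:mixing}, which gives a spectral gap independent of $\alpha$, and treat its coupling to $\tilde x_t$ perturbatively through a Lyapunov or variation-of-constants expansion.
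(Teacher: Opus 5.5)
Your outline is sound, but the technical core takes a genuinely different route from the paper. Your corrected aggregate $\tilde x_t=\pi_t^\top X_t-\alpha\mathcal D_t(E_t)$, with the exact identity $\tilde x_{t+1}=\tilde x_t-\alpha b_t^\top G_t$, is precisely the paper's $\nu_t=\mathcal L_tS_t$. The two routes diverge in how this identity is used.

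You take $\tilde x_t$ itself as the slow variable. This makes the noise loading $-\alpha b_t^\top$ exact, but you must then control the coupling $-\alpha Lb_t^\top(X_t-\one\tilde x_t)$ by a two-timescale, variation-of-constants argument.

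The paper instead works with the one-period map $A(\alpha)$ on $(X,E)$. Its eigenvalue near $1$ is simple, so $\gamma(\alpha)$ and the left eigenvector $\ell(\alpha)$ are analytic, and $z_{kp}=\ell^\top S_{kp}$ is an \emph{exactly} decoupled AR(1) process. The functional $\mathcal L_0$ enters only to identify $\gamma=1-Lph\alpha+O(\alpha^2)$ and $\ell^\top B_r=-\alpha b_r^\top+O(\alpha^2)$. The second of these needs a derivative-matching argument showing that $\ell'(0)-\mathcal L_0'(0)$ vanishes on the tracker subspace. The complementary component has an $\alpha$-uniform spectral gap and contributes only $O(\alpha^2)$ in mean square.

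So the paper buys exact decoupling and clean geometric sums at the price of perturbation theory. Your route avoids eigenvector expansions but pays with the coupling bookkeeping you already flag as the main obstacle.

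The endgames agree in substance. The paper takes a near-minimizer $\alpha_T$, gets $\alpha_T\to0$ from the fresh-noise bound $L^2\alpha^2\sigma^2/n$, gets $\alpha_TT\to\infty$ from the bias, and then applies the expansion. Your regime split does the same job. Your regime $\alpha\ge K_TT^{-1/2}$ also need not extend up to a fixed constant: the fresh-noise bound already dominates $T^{-1/2}$ once $\alpha\gg T^{-1/4}$, so you only need the expansion for $\alpha\to0$.

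One correction for when you execute Step~1: the bound $O(\alpha^2x_0^2\rho^t)$ on the deterministic part of $X_t-\one\tilde x_t$ fails as soon as some $C_t$ is not row stochastic. In that case $(C_t-I)\one\ne0$, so $E_t$ is driven at order one by $(C_t-I)L\one\bar x_t$ and relaxes to roughly $L(nv_t-\one)\bar x_t$. After an $O(1)$ transient, the mean of $X_t-\one\tilde x_t$ is of order $\alpha|\E\tilde x_t|$. It decays only at the slow rate $(1-Lh\alpha)^t$, not geometrically fast. This part is slaved to the slow mode and must be absorbed as an $O(\alpha^2)$ correction to the period contraction factor. That is harmless, since it gives only a relative $O(\alpha)$ error in the bias term, but it is not a fast transient.
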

\textcolor{black}{Together, these theorems identify the sharp order of the leading stochastic term as} ${\color{black}\Theta}\left(\sigma\sqrt{\frac{L\Delta}{\neff T}}\right)$. {\color{black}For periodic networks, the leading stochastic term has the usual $n$-worker scaling if and only if $\neff=\Theta(n)$.}

\section{Auxiliary variables and lemmas}\label{sec:aux-identities}
\begin{definition}\label{def:aux}
Recall the tracker residual $E_t=Y_t-G_t$.  We use its recursion together with the following auxiliary quantities:
\begin{subequations}\label{eq:def-aux}
\begin{align}
    E_{t+1}={}&C_tE_t+(C_t-I)G_t,\qquad E_0=\zero;\label{eq:def-et}\\
    E_{t+1}^Z={}&C_tE_t^Z+(C_t-I)Z_t,\qquad E_0^Z=\zero;\label{eq:def-et-z}\\
    E_{t+1}^\varepsilon={}&C_tE_t^\varepsilon+(C_t-I)\varepsilon_t,\qquad E_0^\varepsilon=\zero;\label{eq:def-et-eps}\\
    \tau_k^Z={}&{\color{black}{\color{black}\mathcal{D}_0(E_{s_k}^Z)}};\label{eq:def-tau-z}\\
    \eta_k={}&{\color{black}{\color{black}\mathcal{D}_0(E_{s_k}^\varepsilon)}};\label{eq:def-eta}\\
    \rho_k={}&\tau_{k+1}^Z-\tau_k^Z;\label{eq:def-rho}\\
    \omega_k={}&{\color{black}X_{s_k}^\top\pi_0}-\alpha\eta_k;\label{eq:def-omega}
\end{align}
\end{subequations}
where $s_k:=kp$ is the first iteration of period $k$. {\color{black}The vectors $\tau_k^Z,\eta_k,\rho_k,\omega_k$ belong to $\R^d$.} By construction,
\begin{align}
    E_t^Z+E_t^\varepsilon=E_t=Y_t-G_t,\quad\forall t\ge0\label{eq:prop-et-z-eps}
\end{align}
\end{definition}
\begin{lemma}\label{lem:prop-omega}
    Under Assumption \ref{ass:network}, the auxiliary points $\omega_k$ satisfy:
    \begin{align}
        \omega_{k+1}=\omega_k-\alpha(H\bar{g}_k+e_k-\rho_k+\zeta_k),\quad\forall k\ge0;\label{eq:prop-omega}
    \end{align}
    where $H:=hp$, $\bar{g}_k:=\nabla f(\bar{x}_{s_k})$, $e_k:=\sum_{r=0}^{p-1}{\color{black}Z_{s_k+r}^\top b_r}-H\bar{g}_k$, and $\zeta_k=\sum_{r=0}^{p-1}{\color{black}\varepsilon_{s_k+r}^\top b_r}$.
\end{lemma}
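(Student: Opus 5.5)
The identity is purely algebraic: it does not use smoothness or the noise model. My plan is to derive a one-step recursion for the $\pi$-weighted model average, rewrite the tracker-residual contribution as a telescoping difference of the functionals $\mathcal D_t$ via the Poisson identity \eqref{eq:poisson}, and then sum over one period.

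\emph{Step 1 (weighted average).} Multiply \eqref{eq:pushpull-x} on the left by $\pi_{t+1}^\top$, i.e.\ transpose and act on $\pi_{t+1}$. Using $\pi_t^\top=\pi_{t+1}^\top R_t$ from \eqref{eq:perron-left} and $Y_t=G_t+E_t$, this gives
\[
X_{t+1}^\top\pi_{t+1}=X_t^\top\pi_t-\alpha\bigl(G_t^\top\pi_{t+1}+E_t^\top\pi_{t+1}\bigr).
\]

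\emph{Step 2 (absorbing the residual).} First I note that $\one^\top E_t=0$ for all $t$. This follows by induction from $E_0=\zero$ and \eqref{eq:def-et}, since $\one^\top C_t=\one^\top$ and $\one^\top(C_t-I)=0$. The same argument gives $\one^\top E^Z_t=\one^\top E^\varepsilon_t=0$, so $\mathcal D_t$ is well defined on all of these, by Lemma~\ref{lem:mixing}.

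Applying \eqref{eq:poisson} with $W=E_t$ gives $E_t^\top\pi_{t+1}=\mathcal D_t(E_t)-\mathcal D_{t+1}(C_tE_t)$. Substituting $C_tE_t=E_{t+1}-(C_t-I)G_t$ from \eqref{eq:def-et}, I then use linearity of $\mathcal D_{t+1}$ together with the right-multiplication rule $\mathcal D_{t+1}(WA)=A^\top\mathcal D_{t+1}(W)$. This rule is immediate from the series \eqref{eq:Dt}, and $W=C_t-I$ has zero column sums. The result is
\[
E_t^\top\pi_{t+1}=\mathcal D_t(E_t)-\mathcal D_{t+1}(E_{t+1})+G_t^\top\mathcal D_{t+1}(C_t-I).
\]
Combining this with $G_t^\top\pi_{t+1}$ and using \eqref{eq:br-D} (extended periodically, so $b_t=\pi_{t+1}+\mathcal D_{t+1}(C_t-I)$) yields
\[
G_t^\top\pi_{t+1}+E_t^\top\pi_{t+1}=G_t^\top b_t+\mathcal D_t(E_t)-\mathcal D_{t+1}(E_{t+1}).
\]

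\emph{Step 3 (summing over a period).} I sum Step 1 over $t=s_k,\dots,s_{k+1}-1$. The $\mathcal D$ terms telescope, and $p$-periodicity gives $\mathcal D_{s_k}=\mathcal D_{s_{k+1}}=\mathcal D_0$ and $\pi_{s_k}=\pi_0$. Hence
\[
X_{s_{k+1}}^\top\pi_0=X_{s_k}^\top\pi_0-\alpha\sum_{r=0}^{p-1}G_{s_k+r}^\top b_r-\alpha\bigl(\mathcal D_0(E_{s_k})-\mathcal D_0(E_{s_{k+1}})\bigr).
\]
Next I split $E=E^Z+E^\varepsilon$ by \eqref{eq:prop-et-z-eps} and $G=Z+\varepsilon$ by \eqref{eq:gradient-decomposition}. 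This turns the last bracket into $-(\tau^Z_{k+1}-\tau^Z_k)-(\eta_{k+1}-\eta_k)=-\rho_k-(\eta_{k+1}-\eta_k)$.

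Moving the term $-\alpha\eta_{k+1}$ to the left-hand side and the term $\alpha\eta_k$ into $\omega_k$, as in \eqref{eq:def-omega}, gives
\[
\omega_{k+1}=\omega_k-\alpha\Bigl(\sum_r Z_{s_k+r}^\top b_r-\rho_k+\zeta_k\Bigr).
\]
Finally, I replace $\sum_r Z_{s_k+r}^\top b_r$ by $H\bar g_k+e_k$, which is just the definition of $e_k$. This yields \eqref{eq:prop-omega}.

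The only delicate point is the bookkeeping in Step 2. One must check that every argument of $\mathcal D$ has zero column sums, and get the transpose convention $\mathcal D(WA)=A^\top\mathcal D(W)$ right. I expect that step to be the main, though mild, obstacle. Everything else is direct substitution.
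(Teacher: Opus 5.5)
Your proof is correct and follows essentially the same route as the paper. The paper packages your Steps 1--2 into the single recursion $\nu_{t+1}=\nu_t-\alpha G_t^\top b_t$ for $\nu_t:=X_t^\top\pi_t-\alpha\mathcal D_t(E_t)$, derived from the Perron relation and the Poisson identity just as you do, and then sums over one period using $\omega_k=\nu_{s_k}+\alpha\tau_k^Z$ (your Step 3 split $E=E^Z+E^\varepsilon$); your explicit checks that $\one^\top E_t=0$ and that $\mathcal D_{t+1}((C_t-I)G_t)=G_t^\top\mathcal D_{t+1}(C_t-I)$ are details the paper leaves implicit.
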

\begin{proof}
Set $\nu_t={\color{black}X_t^\top\pi_t-\alpha{\color{black}\mathcal D_t(E_t)}}$. Using $Y_t=G_t+E_t$, the Perron relation \eqref{eq:perron-left}, and the Poisson identity \eqref{eq:poisson} in the primal and residual updates \eqref{eq:pushpull} gives
\begin{align}
 \nu_{t+1}
 &={\color{black}X_t^\top\pi_t}-\alpha\bigl[{\color{black}(G_t+E_t)^\top\pi_{t+1}}
       +{\color{black}{\color{black}\mathcal D_{t+1}(C_tE_t+(C_t-I)G_t)}}\bigr]\nonumber\\
 &=\nu_t-\alpha\bigl[{\color{black}G_t^\top\pi_{t+1}}+
                {\color{black}{\color{black}\mathcal D_{t+1}((C_t-I)G_t)}}\bigr]
 =\nu_t-\alpha {\color{black}G_t^\top b_t}.
 \label{eq:pflm-prop-nu}
\end{align}
At block boundaries, periodicity and \eqref{eq:prop-et-z-eps} imply $\omega_k=\nu_{s_k}+\alpha\tau_k^Z$. Summing \eqref{eq:pflm-prop-nu} gives
\[
 \omega_{k+1}-\omega_k=-\alpha\sum_{r=0}^{p-1}{\color{black}(Z_{s_k+r}+\varepsilon_{s_k+r})^\top b_r}+\alpha(\tau_{k+1}^Z-\tau_k^Z),
\]
which is \eqref{eq:prop-omega} by the definitions of $e_k$, $\rho_k$, and $\zeta_k$.
\end{proof}

\begin{lemma}[Basic properties of phase weights]\label{lem:neff}
It holds that
\begin{equation}\label{eq:average-b}
 \frac1p\sum_{r=0}^{p-1}b_r^\top=\frac{h}{n}\one^\top {\color{black}, \quad  h=\frac np\sum_{r=0}^{p-1}\pi_{r+1}^\top v_r>0}.
\end{equation}
\end{lemma}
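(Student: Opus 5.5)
The plan is to test the vector $\sum_{r=0}^{p-1}b_r$ against two kinds of vectors. Testing against zero-sum vectors shows it is parallel to $\one$. Testing against $\one$ then gives the constant.

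The one tool needed is a rearranged Poisson identity. For every $w\in\R^n$ with $\one^\top w=0$, apply \eqref{eq:poisson} with $W=w$ at time $r$, which is legitimate because $\one^\top w=0$. Since $\mathcal D_{r+1}$ is linear, this gives
\begin{equation*}
 \mathcal D_{r+1}\bigl((C_r-I)w\bigr)=\mathcal D_r(w)-\mathcal D_{r+1}(w)-w^\top\pi_{r+1}.
\end{equation*}
From \eqref{eq:br-D} we have $b_r^\top w=\pi_{r+1}^\top w+\mathcal D_{r+1}\bigl((C_r-I)w\bigr)$. I read this as the natural action of the linear operator $\mathcal D_{r+1}$ on the columns of $C_r-I$, and I would check this reading carefully against \eqref{eq:br-direct}. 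Substituting the identity above then yields
\begin{equation*}
 b_r^\top w=\mathcal D_r(w)-\mathcal D_{r+1}(w),\qquad \one^\top w=0.
\end{equation*}

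\textbf{Step 1 (proportionality to $\one$).} Fix a zero-sum $w$ and sum the last display over $r=0,\dots,p-1$. The sum telescopes to $\mathcal D_0(w)-\mathcal D_p(w)$, which vanishes because $\mathcal D_t$ is $p$-periodic. Hence $\sum_r b_r$ is orthogonal to $\one^\perp$, so $\sum_r b_r=\kappa\one$ for some scalar $\kappa$. Pairing with $\one$ gives $\kappa n=\sum_r b_r^\top\one=ph$. This is exactly the first identity in \eqref{eq:average-b}.

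\textbf{Step 2 (the value of $h$).} Put $u=\one/n$ and compute $\sum_r b_r^\top v_r$ in two ways.

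First, since $v_{r+1}=C_rv_r$ by \eqref{eq:perron-right}, the definition gives
\begin{equation*}
 b_r^\top v_r=\pi_{r+1}^\top v_r+\mathcal D_{r+1}(v_{r+1}-v_r).
\end{equation*}
Split $v_{r+1}-v_r=(v_{r+1}-u)-(v_r-u)$; both pieces are zero-sum because $\one^\top v_t=1$.

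Second, write $v_r=u+(v_r-u)$ and apply the key identity to $w=v_r-u$:
\begin{equation*}
 b_r^\top v_r=b_r^\top u+\mathcal D_r(v_r-u)-\mathcal D_{r+1}(v_r-u).
\end{equation*}

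Equate the two expressions and sum over $r$. The terms $-\mathcal D_{r+1}(v_r-u)$ appear on both sides and cancel. What remains is
\begin{equation*}
 \sum_r b_r^\top u=\sum_r\pi_{r+1}^\top v_r+\sum_r\mathcal D_{r+1}(v_{r+1}-u)-\sum_r\mathcal D_r(v_r-u).
\end{equation*}
The last two sums coincide after reindexing, using the periodicity of both $v_t$ and $\mathcal D_t$, so $\sum_r b_r^\top u=\sum_r\pi_{r+1}^\top v_r$. Multiplying by $n/p$ gives $h=\frac np\sum_r\pi_{r+1}^\top v_r$. This is strictly positive because $\pi_t>0$ and $v_t>0$ by \eqref{eq:perron-left} and \eqref{eq:perron-right}.

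\textbf{Main obstacle.} The only delicate point is bookkeeping: the Poisson identity may only be applied to zero-sum arguments, and the centring vector in Step 2 changes with $r$. Centring every $v_r$ at the same $u$ resolves both issues, because it makes the boundary terms telescope over a period. Convergence and linearity of $\mathcal D_t$ come from Lemma~\ref{lem:mixing}, so no analytic issues arise beyond that.
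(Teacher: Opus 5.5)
Your proof is correct, and it takes a genuinely different route from the paper's.

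\textbf{The paper's route.} The paper argues dynamically. It runs the Push-Pull recursion with a constant input $G_t\equiv W$ and an auxiliary $\alpha>0$, then computes the Ces\`aro limit of $Y_t^\top\pi_{t+1}$ in two ways. The first uses the Perron convergence $C_{t-1:0}W\to v_t\one^\top W$ from Lemma~\ref{lem:mixing}. The second uses the $\nu_t$ recursion \eqref{eq:pflm-prop-nu}, where the telescoping remainder $\mathcal D_{t+1}(E_{t+1})-\mathcal D_t(E_t)$ is removed by a uniform bound on $\mathcal D_t(E_t)$. Since $W$ is arbitrary, this yields $\frac1p\sum_r b_r=(\frac1p\sum_r\pi_{r+1}^\top v_r)\one$ in one stroke. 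It also reinforces the reading of $b_r$ as the long-run aggregate response to a gradient perturbation. The cost is that it needs limits and the mixing estimates even for the proportionality claim.

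\textbf{Your route.} Your argument is purely algebraic. The only analytic input is convergence of the series \eqref{eq:Dt}; the rest is finitely many applications of \eqref{eq:poisson} plus $p$-periodicity. It also separates the two claims cleanly. Proportionality to $\one$ needs no information about the right Perron vectors $v_r$, which enter only when you evaluate the constant. Moreover, you obtain the finer per-phase identity $b_r^\top w=\mathcal D_r(w)-\mathcal D_{r+1}(w)$ for $\one^\top w=0$. In words, the zero-sum part of each individual $b_r$ is a periodic coboundary, and averaging over a period kills it.

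\textbf{Two small remarks.} Your reading of $\mathcal D_{r+1}(C_r-I)$ as acting on the columns is the right one. It gives $b_r^\top w=\pi_{r+1}^\top w+\mathcal D_{r+1}((C_r-I)w)$ for every $w\in\R^n$, not only zero-sum $w$, because $(C_r-I)w$ always has zero sum. That is exactly what licenses using it with $w=v_r$ in Step 2. Also, Step 2 can be shortened: write $(C_r-I)u=C_r(u-v_r)+(v_{r+1}-u)$ and apply \eqref{eq:poisson} once. This gives directly $b_r^\top u=\pi_{r+1}^\top v_r+\mathcal D_{r+1}(v_{r+1}-u)-\mathcal D_r(v_r-u)$, which is the same per-phase identity your two-way computation produces after the cancellation.
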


\begin{proof}
For this auxiliary calculation, fix any $\alpha>0$ and an input $G_t\equiv W$.
Then $Y_t=C_{t-1:0}W$. By Lemma \ref{lem:mixing} and \eqref{eq:perron-right}, we can write $Y_t=v_t\one^\top W+\mathcal{O}(r_\star^t\|(I-v_0\one^\top)W\|_F)$, implying
    \begin{align}
        {\color{black}\lim_{N\rightarrow\infty}\frac{1}{N}\sum_{t=0}^{N-1}Y_t^\top\pi_{t+1}=\left(\frac{1}{p}\sum_{r=0}^{p-1}\pi_{r+1}^\top v_r\right)W^\top\one.}\label{eq:pflm-prop-br-sum-1}
    \end{align}
    On the other hand, we have
    \begin{align}
        Y_t^\top\pi_{t+1}&=\frac{X_t^\top\pi_t-X_{t+1}^\top\pi_{t+1}}{\alpha}=\frac{\nu_t+\alpha\mathcal{D}_t(E_t)-\nu_{t+1}-\alpha\mathcal{D}_{t+1}(E_{t+1})}{\alpha}\nonumber\\
        &=W^\top b_t-(\mathcal{D}_{t+1}(E_{t+1})-\mathcal{D}_t(E_t)),\nonumber
    \end{align}
    where the first equality uses \eqref{eq:pushpull-x} and \eqref{eq:perron-left}, the second equality uses the definition of $\nu$ as in the proof of Lemma \ref{lem:prop-omega}, and the last equality uses \eqref{eq:pflm-prop-nu}. \textcolor{black}{Since $E_t=C_{t-1:0}W-W$ is bounded and has zero column sums, Lemma~\ref{lem:mixing} gives $\|\mathcal D_t(E_t)\|\le M_C(1-r_\star)^{-1}\|E_t\|_F$ uniformly in $t$, so the telescoping term vanishes after division by $N$.} Thus
    \begin{align}
        {\color{black}\lim_{N\rightarrow\infty}\frac{1}{N}\sum_{t=0}^{N-1}Y_t^\top\pi_{t+1}=\frac{1}{p}\sum_{r=0}^{p-1}W^\top b_r.}\label{eq:pflm-prop-br-sum-2}
    \end{align}
    Comparing \eqref{eq:pflm-prop-br-sum-1} and \eqref{eq:pflm-prop-br-sum-2} and noticing the arbitrariness of $W$, we obtain
    \begin{align}
        \frac{1}{p}\sum_{r=0}^{p-1}b_r=\left(\frac{1}{p}\sum_{r=0}^{p-1}\pi_{r+1}^\top v_r\right)\one.\label{eq:pflm-prop-br-sum-3}
    \end{align}
    Combining \eqref{eq:pflm-prop-br-sum-3} and \eqref{eq:intro-h-q} yields \eqref{eq:average-b}.
\end{proof}
\section{Convergence upper bound}\label{sec:upper-proof}
This section proves Theorem~\ref{thm:upper}, beginning with a descent lemma for the auxiliary sequence $\{\omega_k\}_{k\ge0}$:
\begin{lemma}[Descent lemma]\label{lem:descent}
    Under Assumptions~\ref{ass:objective} and \ref{ass:network}, for $\alpha\le\frac{1}{40LH}$ and every $k\ge0$,
    \begin{align}
        \mathbb{E}[f(\omega_{k+1})-f(\omega_k)]\le{}&-\frac{5\alpha H}{8}\mathbb{E}[\|\bar{g}_k\|^2]+\frac{5}{8}\alpha^2pqL\sigma^2+5\alpha HL^2\mathbb{E}[\|{\color{black}X_{s_k}^\top\pi_0}-\bar{x}_{s_k}\|^2]\nonumber\\
        &+\frac{21\alpha}{4H}\mathbb{E}[\|e_k\|^2+\|\rho_k\|^2]+5\alpha^3HL^2\mathbb{E}[\|\eta_k\|^2],\label{eq:lm-descent}
    \end{align}
    where $\bar{g}_k$, $e_k$, $\rho_k$, and $\eta_k$ are defined as in Lemma \ref{lem:prop-omega} and \eqref{eq:def-eta} and \eqref{eq:def-rho}.
\end{lemma}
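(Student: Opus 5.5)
The argument applies the standard $L$-smoothness descent inequality to $f$ at the auxiliary sequence $\omega_k$, using the recursion of Lemma~\ref{lem:prop-omega}. Write $\omega_{k+1}-\omega_k=-\alpha(H\bar g_k+e_k-\rho_k+\zeta_k)$. Smoothness of $f$ (inherited from the $f_i$) gives
\[
 f(\omega_{k+1})\le f(\omega_k)-\alpha\langle\nabla f(\omega_k),H\bar g_k+e_k-\rho_k+\zeta_k\rangle+\frac{L\alpha^2}{2}\|H\bar g_k+e_k-\rho_k+\zeta_k\|^2 .
\]
The first task is to handle the noise $\zeta_k=\sum_{r}\varepsilon_{s_k+r}^\top b_r$. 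Since $\omega_k$ is $\cF_{s_k}^-$-measurable, $\E\langle\nabla f(\omega_k),\zeta_k\rangle=0$. For the quadratic term, conditional independence across agents and the martingale-difference structure across the $p$ phases give
\[
 \E\|\zeta_k\|^2\le\sigma^2\sum_r\|b_r\|^2=pq\sigma^2 .
\]
The cross terms between $\zeta_k$ and $(H\bar g_k,e_k,\rho_k)$ are the main obstacle. The quantities $e_k$ and $\rho_k$ depend on $Z_{s_k+r}$ for $r\ge1$, hence on $\varepsilon_{s_k},\dots,\varepsilon_{s_k+r-1}$, so they are not independent of $\zeta_k$. I will therefore not expand them. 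Instead I will use Young's inequality, $\|a+\zeta\|^2\le(1+c)\|a\|^2+(1+1/c)\|\zeta\|^2$. With the smallness $\alpha L H\le1/40$ and a suitable $c$, this leaves a noise coefficient of the form $\tfrac58\alpha^2pqL\sigma^2$ up to the stated constant. The choice of $c$ is where the constants $5/8$ and $21/4$ must be tuned.

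Next comes the deterministic inner product. I will write $\nabla f(\omega_k)=\bar g_k+(\nabla f(\omega_k)-\bar g_k)$ and bound
\[
 \|\nabla f(\omega_k)-\bar g_k\|\le L\|\omega_k-\bar x_{s_k}\|\le L\|X_{s_k}^\top\pi_0-\bar x_{s_k}\|+\alpha L\|\eta_k\|,
\]
using the definition \eqref{eq:def-omega}. Then
\[
 -\alpha\langle\nabla f(\omega_k),H\bar g_k\rangle\le-\alpha H\|\bar g_k\|^2+\tfrac{\alpha H}{8}\|\bar g_k\|^2+2\alpha HL^2\bigl(\|X_{s_k}^\top\pi_0-\bar x_{s_k}\|^2+\alpha^2\|\eta_k\|^2\bigr)\cdot c' .
\]
The terms $-\alpha\langle\nabla f(\omega_k),e_k-\rho_k\rangle$ are handled by Young's inequality with weight $H$. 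They produce a bound of the form $\tfrac{\alpha H}{8}\|\nabla f(\omega_k)\|^2+\tfrac{c''\alpha}{H}(\|e_k\|^2+\|\rho_k\|^2)$. Here $\|\nabla f(\omega_k)\|^2$ is re-expanded as $2\|\bar g_k\|^2$ plus the consensus and $\eta$ terms.

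Finally, the quadratic term $\tfrac{L\alpha^2}{2}(1+c)\|H\bar g_k+e_k-\rho_k\|^2\le\tfrac{3L\alpha^2}{2}(1+c)(H^2\|\bar g_k\|^2+\|e_k\|^2+\|\rho_k\|^2)$ is absorbed using $\alpha LH\le1/40$. The $H^2$ part becomes at most a small multiple of $\alpha H\|\bar g_k\|^2$, and the remaining part becomes a small multiple of $\alpha/H$ times $\|e_k\|^2+\|\rho_k\|^2$. Collecting coefficients, the $\|\bar g_k\|^2$ terms total at most $-\tfrac58\alpha H$. The consensus and $\eta_k$ terms are bounded by $5\alpha HL^2$ and $5\alpha^3HL^2$, and the $e_k,\rho_k$ terms by $\tfrac{21\alpha}{4H}$. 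Taking total expectation yields \eqref{eq:lm-descent}. Only the constant bookkeeping, in particular the noise cross-term step, requires care.
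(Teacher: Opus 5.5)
Your overall route is the paper's: smoothness at $\omega_k$, $\E\langle\nabla f(\omega_k),\zeta_k\rangle=0$ by $\cF_{s_k}^-$-measurability of $\omega_k$, $\E\|\zeta_k\|^2\le pq\sigma^2$, Young's inequality on $\|H\bar g_k+e_k-\rho_k+\zeta_k\|^2$ rather than expanding the correlated cross terms, and $\|\nabla f(\omega_k)-\bar g_k\|\le L\|X_{s_k}^\top\pi_0-\bar x_{s_k}\|+\alpha L\|\eta_k\|$.

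The gap is the final step, where you assert that the coefficients collect to the stated ones. With the splits you commit to, they do not. The noise constant $\frac58$ forces $c\ge4$, so $c$ is not where tuning can happen. Your $\|\bar g_k\|^2$ coefficient then collects four contributions:
\begin{itemize}
\item $-\alpha H$ from the main inner product;
\item $\frac{15}{2}L\alpha^2H^2\le\frac{3}{16}\alpha H$ from the factor-$3$ split of the deterministic square;
\item $\frac18\alpha H$ from the Young step on $\langle\nabla f(\omega_k)-\bar g_k,H\bar g_k\rangle$;
\item $\frac14\alpha H$ from the Young step on $\langle\nabla f(\omega_k),e_k-\rho_k\rangle$ with weight $\frac{\alpha H}{8}$, after $\|\nabla f(\omega_k)\|^2\le2\|\bar g_k\|^2+\cdots$.
\end{itemize}
The total is $-\frac{7}{16}\alpha H$, not $-\frac58\alpha H$. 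Retuning does not repair this. With the factor-$2$ re-expansion of $\|\nabla f(\omega_k)\|^2$, no choice of the remaining Young weights reaches $-\frac58$, $\frac{21}{4}$ and $5$ together; this covers the split ratio of the deterministic square and the weights on the two inner products. The paper's constants lie exactly on the boundary of what this family of inequalities allows.

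The fix, which is what the paper does, is to split $\nabla f(\omega_k)=\bar g_k+d_k$ before applying Young, with asymmetric weights. With $a_k=e_k-\rho_k$, use:
\begin{itemize}
\item $-\langle\bar g_k,a_k\rangle\le\frac H8\|\bar g_k\|^2+\frac2H\|a_k\|^2$;
\item $-H\langle d_k,\bar g_k\rangle\le\frac H8\|\bar g_k\|^2+2H\|d_k\|^2$;
\item $-\langle d_k,a_k\rangle\le\frac H2\|d_k\|^2+\frac1{2H}\|a_k\|^2$.
\end{itemize}
These total $-\frac34H\|\bar g_k\|^2+\frac5{2H}\|a_k\|^2+\frac{5H}{2}\|d_k\|^2$. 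For the square, use the two-term bound $\E\|H\bar g_k+a_k+\zeta_k\|^2\le10H^2\E\|\bar g_k\|^2+10\E\|a_k\|^2+\frac54pq\sigma^2$ instead of a three-term split. With $\alpha LH\le\frac1{40}$ this gives $-\frac34\alpha H+5L\alpha^2H^2\le-\frac58\alpha H$ and $\frac{5\alpha}{2H}+5L\alpha^2\le\frac{21\alpha}{8H}$. Then $\|a_k\|^2\le2(\|e_k\|^2+\|\rho_k\|^2)$ and $\|d_k\|^2\le2L^2\|X_{s_k}^\top\pi_0-\bar x_{s_k}\|^2+2\alpha^2L^2\|\eta_k\|^2$ yield exactly $\frac{21\alpha}{4H}$, $5\alpha HL^2$ and $5\alpha^3HL^2$. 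In your formulation, this amounts to putting weight $\frac{\alpha H}{10}$ on $\|\nabla f(\omega_k)\|^2$ and re-expanding it as $\frac54\|\bar g_k\|^2+5\|d_k\|^2$, rather than weight $\frac{\alpha H}{8}$ with $2\|\bar g_k\|^2+2\|d_k\|^2$.
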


\begin{proof}
Write $\hat g_k=\nabla f(\omega_k)$, $a_k=e_k-\rho_k$, and $d_k=\hat g_k-\bar g_k$. \textcolor{black}{Since $\hat g_k$ is $\cF_{s_k}^-$-measurable, conditioning on $\cF_{s_k+r}^-$ and using Assumption~\ref{ass:objective} gives $\E[\langle\hat g_k,\zeta_k\rangle]=0$ and $\E[\|\zeta_k\|^2]\le pq\sigma^2$.} Smoothness and \eqref{eq:prop-omega} therefore imply
\begin{equation}
 \E[f(\omega_{k+1})-f(\omega_k)]\le-\alpha\E[\langle\hat g_k,H\bar g_k+a_k\rangle]+\frac{L\alpha^2}{2}\E[\|H\bar g_k+a_k+\zeta_k\|^2].\label{eq:pflm-descent-1}
\end{equation}
Applying Young's inequality gives
\begin{align*}
 -\E[\langle\bar g_k+d_k,H\bar g_k+a_k\rangle]&\le-\tfrac34H\E[\|\bar g_k\|^2]+\tfrac{5}{2H}\E[\|a_k\|^2]+\tfrac{5H}{2}\E[\|d_k\|^2],\\
 \E[\|H\bar g_k+a_k+\zeta_k\|^2]&\le10H^2\E[\|\bar g_k\|^2]+10\E[\|a_k\|^2]+\tfrac54pq\sigma^2.
\end{align*}
Inserting these estimates into \eqref{eq:pflm-descent-1} and using $\alpha LH\le1/40$ gives
\[
 \E[f(\omega_{k+1})-f(\omega_k)]\le-\frac{5\alpha H}{8}\E[\|\bar g_k\|^2]+\frac{21\alpha}{8H}\E[\|a_k\|^2]+\frac{5\alpha H}{2}\E[\|d_k\|^2]+\frac58L\alpha^2pq\sigma^2.
\]
Finally, it suffices to note that $\|a_k\|^2\le2(\|e_k\|^2+\|\rho_k\|^2)$ and
\[
 \E[\|d_k\|^2]\le2L^2\E[\|{\color{black}X_{s_k}^\top\pi_0}-\bar x_{s_k}\|^2]+2\alpha^2L^2\E[\|\eta_k\|^2]
\]
by the definition of $\omega_k$ and smoothness.
\end{proof}
We next bound $\mathbb{E}[\|{\color{black}X_{s_k}^\top\pi_0}-\bar{x}_{s_k}\|^2]$, $\mathbb{E}[\|e_k\|^2]$, $\mathbb{E}[\|\rho_k\|^2]$, and $\mathbb{E}[\|\eta_k\|^2]$. Define
\begin{equation}\label{eq:def-q-m-g}
\begin{aligned}
    &\Pi=I-\frac{1}{n}\one\one^\top,\quad Q_t=\Pi X_t,\quad  \mathcal{Q}_N=\sum_{t=0}^N\mathbb{E}[\|Q_t\|_F^2],\\
    &\mathcal{M}_N=\sum_{t=0}^{N-1}\mathbb{E}[\|X_{t+1}-X_t\|_F^2],\ \text{and}\  \mathcal{G}_K=\sum_{k=0}^{K-1}\mathbb{E}[\|\bar{g}_k\|^2].
\end{aligned}
\end{equation}
\begin{lemma}\label{lem:control-chi-e-rho-eta}
Under Assumptions~\ref{ass:objective} and \ref{ass:network}, the following bounds hold for any $K\ge1$ and $N=pK$:
\begin{subequations}\label{eq:control-chi-e-rho-eta}
    \begin{align}
        &\sum_{k=0}^{K-1}\mathbb{E}[\|{\color{black}X_{s_k}^\top\pi_0}-\bar{x}_{s_k}\|^2]\le\kappa_{\chi}^2\mathcal{Q}_N,\label{eq:control-chi}\\
        &\sum_{k=0}^{K-1}\mathbb{E}[\|e_k\|^2]\le L^2(E_Q\mathcal{Q}_N+E_M\mathcal{M}_N),\label{eq:control-e}\\
        &\sum_{k=0}^{K-1}\mathbb{E}[\|\rho_k\|^2]\le C_{\rho,0}H_0^2+C_{\rho,x}L^2\mathcal{M}_N,\label{eq:control-rho}\\
        &\sup_{k\ge0}\mathbb{E}[\|\eta_k\|^2]\le V_\tau\sigma^2,\label{eq:control-eta}
    \end{align}
\end{subequations}
where $H_0:=\|\Theta_0^Z\|_F$ with $\Theta_t^Z:=E_t^Z+P_tZ_t$ and $P_t:=I-v_t\one^\top$, constants $\kappa_\chi:=\|\pi_0-\frac{1}{n}\one\|$, $E_Q:=\frac{3h^2p}{n}$, $E_M:=\frac{3h^2p^3}{n}+3\sum_{r=0}^{p-2}\|u_r\|^2$, $C_{\rho,0}:=\frac{12d_\star^2M_C^2}{1-r_\star^2}$, $C_{\rho,x}:=\frac{12d_\star^2s_\star^2M_C^2}{(1-r_\star)^2}+3p\|P_0\|^2d_\star^2$, and $V_\tau:=\frac{d_\star^2M_C^2\kappa_C^2n}{1-r_\star^2}$, with $u_r:=\sum_{j=0}^r(b_j-\frac{h}{n}\one)$, $M_C$ defined as in Lemma \ref{lem:mixing}, \textcolor{black}{$d_\star:=\sup\{\|\mathcal{D}_0(W)\|:\one^\top W=0,\ \|W\|_F\le1\}$},  $s_\star:=\max_{0\le r<p}\|P_r\|_2$, and $\kappa_C:=\max_{0\le r<p}\|C_r-I\|_2$.
\end{lemma}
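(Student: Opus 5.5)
I will prove the four estimates separately, each by writing the relevant auxiliary quantity as a linear functional of consensus errors, model increments, or noises, and then using the mixing bounds of Lemma~\ref{lem:mixing}.

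\emph{Bound \eqref{eq:control-chi}.} Since $\pi_0^\top\one=1$, we have $X_{s_k}^\top\pi_0-\bar x_{s_k}=X_{s_k}^\top(\pi_0-\tfrac1n\one)=Q_{s_k}^\top(\pi_0-\tfrac1n\one)$, because $\pi_0-\tfrac1n\one\in\one^\perp$ and $\Pi$ is symmetric. Cauchy--Schwarz gives the pointwise bound $\kappa_\chi^2\|Q_{s_k}\|_F^2$, and summing over $k$ with $\{s_k\}\subset\{0,\dots,N\}$ gives the claim.

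\emph{Bound \eqref{eq:control-e}.} Use \eqref{eq:average-b}, i.e.\ $\sum_r b_r=H\one/n$. Decompose
\begin{equation*}
e_k=\sum_{r=0}^{p-1}(Z_{s_k+r}-Z_{s_k})^\top b_r+\Bigl(\tfrac{H}{n}Z_{s_k}^\top\one-H\bar g_k\Bigr).
\end{equation*}
The second piece is $\tfrac{H}{n}\sum_i(\nabla f_i(x_{i,s_k})-\nabla f_i(\bar x_{s_k}))$. By smoothness and Cauchy--Schwarz it is at most $\tfrac{H^2L^2}{n}\|Q_{s_k}\|_F^2=\tfrac{h^2p^2L^2}{n}\|Q_{s_k}\|_F^2$. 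This gives the $E_Q$ term up to the factor $3$ from splitting and after adjusting $p$-powers.

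For the first piece, I will Abel-sum in $r$. Writing $Z_{s_k+r}-Z_{s_k}=\sum_{j<r}(Z_{s_k+j+1}-Z_{s_k+j})$ and exchanging sums gives $\sum_{j=0}^{p-2}(Z_{s_k+j+1}-Z_{s_k+j})^\top\sum_{r>j}b_r$. Then I write $\sum_{r>j}b_r=\tfrac{H}{n}\one-\sum_{r\le j}b_r=\tfrac{(p-j-1)h}{n}\one-u_j$. Smoothness bounds $\|Z_{t+1}-Z_t\|_F\le L\|X_{t+1}-X_t\|_F$, and Cauchy--Schwarz over $j$ then produces the constants $\tfrac{3h^2p^3}{n}$ and $3\sum\|u_r\|^2$ multiplying $L^2\mathcal M_N$.

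\emph{Bounds \eqref{eq:control-rho} and \eqref{eq:control-eta}.} Here $\mathcal D_0$ is linear with norm at most $d_\star$ on zero-sum inputs. The natural device is the corrected residual $\Theta^Z_t=E_t^Z+P_tZ_t$. Using $C_tP_t=C_t-v_{t+1}\one^\top$ and $P_{t+1}C_t=C_t-v_{t+1}\one^\top$, one checks $P_{t+1}(C_t-I)=C_t-I-\ldots$, and with a short computation
\begin{equation*}
\Theta^Z_{t+1}=C_t\Theta^Z_t+P_{t+1}(Z_{t+1}-Z_t).
\end{equation*}
This holds because $E^Z$ has zero column sums and $\one^\top P_t=0$, so $\Theta^Z_t$ is zero-sum.

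Unrolling this recursion with \eqref{eq:C-mixing} gives
\begin{equation*}
\|\Theta^Z_t\|_F\le M_C r_\star^t H_0+s_\star M_C\sum_{j<t}r_\star^{t-1-j}L\|X_{j+1}-X_j\|_F.
\end{equation*}
Squaring and summing with Young's convolution inequality gives $\sum_t\|\Theta_t^Z\|^2\lesssim \tfrac{M_C^2}{1-r_\star^2}H_0^2+\tfrac{s_\star^2M_C^2}{(1-r_\star)^2}L^2\mathcal M_N$.

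Next, at block boundaries $E^Z_{s_k}=\Theta^Z_{s_k}-P_0Z_{s_k}$, so
\begin{equation*}
\rho_k=\mathcal D_0(\Theta^Z_{s_{k+1}}-\Theta^Z_{s_k})-\mathcal D_0\bigl(P_0(Z_{s_{k+1}}-Z_{s_k})\bigr).
\end{equation*}
The last term is bounded by $d_\star\|P_0\|L\sum_{r<p}\|X_{s_k+r+1}-X_{s_k+r}\|_F$, whose square sums to at most $p\|P_0\|^2d_\star^2L^2\mathcal M_N$. Splitting into three pieces produces the factors $3$ and $12$ in the stated constants.

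For \eqref{eq:control-eta}, I unroll $E^\varepsilon_{s_k}=\sum_{t<s_k}C_{s_k-1:t+1}(C_t-I)\varepsilon_t$. Conditional independence and zero mean across times (orthogonality of martingale increments) together with \eqref{eq:matrix-noise} give
\begin{equation*}
\E\|E^\varepsilon_{s_k}\|_F^2\le\sum_t M_C^2r_\star^{2(s_k-1-t)}\kappa_C^2\,n\sigma^2\le\frac{M_C^2\kappa_C^2n}{1-r_\star^2}\sigma^2,
\end{equation*}
where the factor $n$ comes from $\|C_t-I\|_F^2\le n\kappa_C^2$. Multiplying by $d_\star^2$ finishes.

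The main obstacle is verifying the $\Theta^Z$ recursion algebra exactly and tracking the constants in \eqref{eq:control-e}. The Abel summation there must produce precisely $u_r$ and the $p^3$ factor, and I expect to adjust the splitting weights to match the stated constants.
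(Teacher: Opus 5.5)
\textbf{Gap in \eqref{eq:control-e}.} Your arguments for \eqref{eq:control-chi}, \eqref{eq:control-rho} and \eqref{eq:control-eta} are the paper's. That includes the identity $X_{s_k}^\top\pi_0-\bar x_{s_k}=Q_{s_k}^\top(\pi_0-\frac1n\one)$, the corrected residual with $\Theta^Z_{t+1}=C_t\Theta^Z_t+P_{t+1}(Z_{t+1}-Z_t)$, the three-way split giving $6\cdot2=12$ and $3p$, and the martingale-orthogonality bound with $\|C_t-I\|_F^2\le n\kappa_C^2$. The gap is in \eqref{eq:control-e}, at the step you call ``adjusting $p$-powers.'' Your decomposition puts the whole weight $H=ph$ on the consensus error at the block start. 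The piece $\frac Hn(Z_{s_k}^\top\one-n\bar g_k)$ costs $\frac{h^2p^2L^2}{n}\|Q_{s_k}\|_F^2$, so after the three-way split you get $3h^2p^2/n$, which is $p$ times the stated $E_Q=3h^2p/n$. Summing over $k$, the only comparison you have is $\sum_k\|Q_{s_k}\|_F^2\le\mathcal Q_N$. Reweighting Young's inequality changes numerical factors, not the power of $p$. Keeping the coefficient $3\sum_r\|u_r\|^2$ forces the weight on the block-start piece to exceed $3/2$, so the stated constant is out of reach this way for every $p\ge2$. Your $\mathcal M_N$ coefficient is fine: the $\one$-part of your Abel sum costs at most $\frac{h^2p^3L^2}{3n}\sum_j\|X_{s_k+j+1}-X_{s_k+j}\|_F^2$ before the factor $3$, and the $u_j$-part is exactly the paper's phase term.

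\textbf{Fix: regroup rather than reweight.} Your block-start piece plus the $\one$-part $\frac hn\sum_{j}(p-1-j)(Z_{s_k+j+1}-Z_{s_k+j})^\top\one$ equals $\frac hn\sum_{r<p}Z_{s_k+r}^\top\one-H\bar g_k$. The paper splits this as $\frac hn\sum_{r<p}\bigl(Z_{s_k+r}^\top\one-n\nabla f(\bar x_{s_k+r})\bigr)+h\sum_{r<p}\bigl(\nabla f(\bar x_{s_k+r})-\bar g_k\bigr)$. The first sum compares every phase with its own average and costs $\frac{ph^2L^2}{n}\sum_{r<p}\|Q_{s_k+r}\|_F^2$. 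Summed over $k$, this is the consensus error over all iterations, which gives $E_Q=3h^2p/n$. The second sum is handled by $\|\bar x_{t+1}-\bar x_t\|^2\le n^{-1}\|X_{t+1}-X_t\|_F^2$ and telescoping within the period, contributing $3h^2p^3/n$ to $E_M$. As written, your route proves \eqref{eq:control-e} only with $E_Q$ replaced by $3h^2p^2/n$. That weaker form would still feed into Theorem~\ref{thm:upper-bound} after shrinking $\alpha_{\max}$, but it is not the lemma as stated.
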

\begin{proof}
    We prove \eqref{eq:control-chi}-\eqref{eq:control-eta} in order. To prove \eqref{eq:control-chi}, note that $(\pi_0-\frac1n\one)^\top\one=0$ and hence ${\color{black}X_{s_k}^\top\pi_0}-\bar x_{s_k}={\color{black}Q_{s_k}^\top(\pi_0-\frac1n\one)}$. Therefore
    \begin{equation}
     \mathbb E[\|{\color{black}X_{s_k}^\top\pi_0}-\bar x_{s_k}\|^2]\leq\kappa_\chi^2\mathbb E[\|Q_{s_k}\|_F^2],
    \end{equation}
    and summation gives the first estimate. For \eqref{eq:control-e}, use the decomposition
    \begin{align}
        &e_k=\sum_{r=0}^{p-1}{\color{black}Z_{s_k+r}^\top b_r}-H\bar{g}_k\nonumber\\
        ={}&\underbrace{\sum_{r=0}^{p-1}{\color{black}Z_{s_k+r}^\top(b_r-\frac{h}{n}\one)}}_{:=e_k^{\text{phase}}}+\underbrace{\frac{h}{n}\sum_{r=0}^{p-1}({\color{black}Z_{s_k+r}^\top\one}-n\nabla f(\bar{x}_{s_k+r}))}_{:=e_k^{\text{node}}}+\underbrace{h\sum_{r=0}^{p-1}(\nabla f(\bar{x}_{s_k+r})-\bar{g}_k)}_{:=e_k^{\text{move}}}.\label{eq:pflm-control-e-decomposition}
    \end{align}
    To bound $e_k^{\text{phase}}$, we have
    \begin{align}
        &\|e_k^{\text{phase}}\|^2=\left\|\sum_{r=0}^{p-1}{\color{black}Z_{s_k+r}^\top(b_r-\frac{h}{n}\one)}\right\|^2\nonumber\\
        ={}&\left\|{\color{black}Z_{s_k+p-1}^\top u_{p-1}}+\sum_{r=0}^{p-2}{\color{black}(Z_{s_k+r}-Z_{s_k+r+1})^\top u_r}\right\|^2\nonumber\\
        \le{}&\left(\sum_{r=0}^{p-2}\|u_r\|^2\right)\left(\sum_{r=0}^{p-2}\|Z_{s_k+r}-Z_{s_k+r+1}\|_F^2\right)\nonumber\\
        \le{}&L^2\left(\sum_{r=0}^{p-2}\|u_r\|^2\right)\left(\sum_{r=0}^{p-2}\|X_{s_k+r}-X_{s_k+r+1}\|_F^2\right).\label{eq:pflm-control-e-phase}
    \end{align}
    Here the last equality uses Abel's formula; the first inequality uses Cauchy-Schwarz and $u_{p-1}=\zero$ by Lemma~\ref{lem:neff}; and the last inequality uses Assumption~\ref{ass:objective}. {\color{black}For the node term, use $Z_t^\top\one-n\nabla f(\bar x_t)=\sum_i[\nabla f_i(x_{i,t})-\nabla f_i(\bar x_t)]$.}   Cauchy-Schwarz and smoothness give
\begin{equation}
 \|e_k^{\mathrm{node}}\|^2\le\frac{ph^2L^2}{n}\sum_{r=0}^{p-1}\|Q_{s_k+r}\|_F^2.\label{eq:pflm-control-e-node}
\end{equation}
For the movement term, telescope the averages and use $\|\bar x_{t+1}-\bar x_t\|^2\le n^{-1}\|X_{t+1}-X_t\|_F^2$ to obtain
\begin{equation}
 \sum_{r=0}^{p-1}\|\bar x_{s_k+r}-\bar x_{s_k}\|^2\le\frac1n\sum_{r=0}^{p-1}r\sum_{j=0}^{r-1}\|X_{s_k+j+1}-X_{s_k+j}\|_F^2\le\frac{p^2}{n}\sum_{r=0}^{p-2}\|X_{s_k+r+1}-X_{s_k+r}\|_F^2.\label{eq:pflm-control-e-move-2}
\end{equation}
Since $\|e_k^{\mathrm{move}}\|^2\le h^2pL^2\sum_{r=0}^{p-1}\|\bar x_{s_k+r}-\bar x_{s_k}\|^2$, it follows that
\begin{equation}
\|e_k^{\mathrm{move}}\|^2\le\frac{h^2p^3L^2}{n}\sum_{r=0}^{p-2}\|X_{s_k+r+1}-X_{s_k+r}\|_F^2.\label{eq:pflm-control-e-move}
\end{equation}
Summing \eqref{eq:pflm-control-e-phase}, \eqref{eq:pflm-control-e-node}, and \eqref{eq:pflm-control-e-move} from $k=0$ to $K-1$, taking expectations and using $\|e_k\|^2\le3(\|e_k^{\text{phase}}\|^2+\|e_k^{\text{node}}\|^2+\|e_k^{\text{move}}\|^2)$ yields \eqref{eq:control-e}. To prove \eqref{eq:control-rho}, we start with the following transformation:
    \begin{align}
        \rho_k={\color{black}{\color{black}\mathcal{D}_0(E_{s_{k+1}}^Z-E_{s_k}^Z)}}={\color{black}{\color{black}\mathcal{D}_0(\Theta_{s_{k+1}}^Z-\Theta_{s_k}^Z-P_0(Z_{s_{k+1}}-Z_{s_k}))}}.\label{eq:pflm-control-rho-transform}
    \end{align}
    Applying \eqref{eq:def-et-z} and identity $C_tP_t=C_t-v_{t+1}\one^\top$, we have
    \begin{equation*}
     \Theta_{t+1}^Z=C_t\Theta_t^Z+P_{t+1}(Z_{t+1}-Z_t).
    \end{equation*}
    Expanding the recursion and applying Lemma~\ref{lem:mixing} yields
    \begin{equation*}
     \|\Theta_t^Z\|_F\le M_Cr_\star^tH_0+M_Cs_\star\sum_{j=0}^{t-1}r_\star^{t-j-1}\|Z_{j+1}-Z_j\|_F.
    \end{equation*}
    After squaring and summing, Young's convolution inequality and smoothness give
    \begin{equation}
    \sum_{t=0}^N\E[\|\Theta_t^Z\|_F^2]\le\frac{2M_C^2}{1-r_\star^2}H_0^2+\frac{2M_C^2s_\star^2L^2}{(1-r_\star)^2}\mathcal M_N.\label{eq:pflm-control-rho-theta-bound}
    \end{equation}
    Equation~\eqref{eq:pflm-control-rho-transform} and Cauchy-Schwarz imply
    \begin{equation*}
    \sum_{k<K}\E[\|\rho_k\|^2]\le6d_\star^2\sum_{t=0}^N\E[\|\Theta_t^Z\|_F^2]+3pd_\star^2\|P_0\|_2^2L^2\mathcal M_N.
    \end{equation*}
    Applying \eqref{eq:pflm-control-rho-theta-bound} yields \eqref{eq:control-rho}. To prove \eqref{eq:control-eta}, expand the recursion for $E_t^\varepsilon$. \textcolor{black}{Since the $C_t$ are deterministic and $\E[\varepsilon_j\mid\cF_j^-]=0$, cross terms between different times vanish. Lemma~\ref{lem:mixing} gives}
    \begin{equation}
     \mathbb E[\|E_t^\varepsilon\|_F^2]\leq M_C^2\kappa_C^2 n\sigma^2\sum_{j=0}^{t-1}r_\star^{2(t-j-1)}\leq\frac{M_C^2\kappa_C^2n}{1-r_\star^2}\sigma^2.\label{eq:pflm-control-et-eps}
    \end{equation}
    \textcolor{black}{Applying $\|\mathcal D_0(W)\|\le d_\star\|W\|_F$ with $W=E_{s_k}^\varepsilon$ proves the last estimate.}
\end{proof}
To control $\mathcal{Q}_N$ and $\mathcal{M}_N$, we establish recursive bounds:
\begin{lemma}\label{lem:qn-mn-recursive}
    Under Assumptions~\ref{ass:objective} and \ref{ass:network}, the following bounds hold for every $K\ge1$ and $N=pK$:
    \begin{align}
        \mathcal{Q}_N\le{}&\alpha^2G_R(q_0H_0^2+q_G\mathcal{G}_K+q_ML^2\mathcal{M}_N+q_QL^2\mathcal{Q}_N+q_\sigma nN\sigma^2),\label{eq:control-qn-recursive}\\
        \mathcal{M}_N\le{}&2\kappa_R^2\mathcal{Q}_N+\alpha^2(m_0H_0^2+m_G\mathcal{G}_K+m_ML^2\mathcal{M}_N+m_QL^2\mathcal{Q}_N+m_\sigma nN\sigma^2),\label{eq:control-mn-recursive}
    \end{align}
    where $G_R:=\frac{M_R^2}{(1-r_\star)^2}$, $q_0:=\frac{6M_C^2}{1-r_\star^2}$, $q_G:=12v_\star^2n^2p$, $q_M:=\frac{6M_C^2s_\star^2}{(1-r_\star)^2}+12v_\star^2np^2$, $q_Q:=6v_\star^2n$, $q_\sigma:=3+\frac{3M_C^2\kappa_C^2}{1-r_\star^2}$, $\kappa_R:=\max_{0\le r<p}\|R_r-I\|_2$, and $m_\bullet:=2q_\bullet$ for $\bullet\in\{0,G,M,Q,\sigma\}$. Here $v_\star:=\max_{0\le r<p}\|v_r\|$, $r_\star$, $M_C$, and $M_R$ are from Lemma~\ref{lem:mixing}, while $\kappa_C$ and $H_0$ are from Lemma~\ref{lem:control-chi-e-rho-eta}.
\end{lemma}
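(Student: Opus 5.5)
Both recursions come from unrolling the two Push-Pull updates \eqref{eq:pushpull}. For the consensus error, I apply $\Pi$ to \eqref{eq:pushpull-x}, giving $Q_{t+1}=\Pi R_tX_t-\alpha\Pi Y_t$. Since $X_0=\one x_0^\top$ gives $Q_0=0$, unrolling yields
\[
Q_t=-\alpha\sum_{j=0}^{t-1}\Pi R_{t-1:j+1}Y_j .
\]
By \eqref{eq:R-mixing} (note $\Pi R\,x$ depends only on $\Pi x$ up to the $\Pi$), each summand is bounded by $M_Rr_\star^{t-j-1}\|Y_j\|_F$. Squaring with Young's convolution inequality, $\bigl(\sum_j a_{t-j}c_j\bigr)^2\le(\sum a)\sum_j a_{t-j}c_j^2$, and summing over $t\le N$ gives
\[
\mathcal Q_N\le\alpha^2\frac{M_R^2}{(1-r_\star)^2}\sum_{t<N}\E\|Y_t\|_F^2 .
\]
This explains the prefactor $G_R$. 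The whole problem then reduces to bounding $\sum_t\E\|Y_t\|_F^2$ by the bracket in \eqref{eq:control-qn-recursive}.

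To bound $Y_t$, I decompose
\[
Y_t=G_t+E_t=v_t\one^\top Z_t+\Theta_t^Z+\varepsilon_t+E_t^\varepsilon,
\]
using $E^Z_t=\Theta^Z_t-P_tZ_t$ and $Z_t=P_tZ_t+v_t\one^\top Z_t$. By Cauchy--Schwarz with three groups (the factor $3$ or $6$ in the constants), I bound each piece separately.
\begin{itemize}
\item The $\Theta^Z$ piece is exactly \eqref{eq:pflm-control-rho-theta-bound}, which gives the $q_0H_0^2$ term and the $M_C^2s_\star^2/(1-r_\star)^2$ part of $q_M$.
\item The noise pieces give $\E\|\varepsilon_t\|_F^2\le n\sigma^2$ and \eqref{eq:pflm-control-et-eps}, which together produce $q_\sigma nN\sigma^2$.
\item The mean piece satisfies $\|v_t\one^\top Z_t\|_F\le v_\star\|Z_t^\top\one\|$, and I expand
\[
Z_t^\top\one=n\bar g_k+\bigl(Z_t^\top\one-n\nabla f(\bar x_t)\bigr)+n\bigl(\nabla f(\bar x_t)-\bar g_k\bigr)
\]
for $t$ in period $k$.
\end{itemize}
In the mean-piece expansion, the middle term is bounded by $nL^2\|Q_t\|_F^2$ as in the node-term estimate, which gives $q_Q=6v_\star^2n$. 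The last term is bounded by $n^2L^2\|\bar x_t-\bar x_{s_k}\|^2$ and then by \eqref{eq:pflm-control-e-move-2}, which gives $np^2L^2\mathcal M_N$ inside $q_M$. The first term, summed over the $p$ phases of each period, gives $n^2p\,\mathcal G_K$. Cross terms between noise and predictable quantities vanish in expectation, but for simplicity I will just use the crude Cauchy--Schwarz split. This matches the stated constants up to bookkeeping.

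For \eqref{eq:control-mn-recursive}, I write
\[
X_{t+1}-X_t=(R_t-I)X_t-\alpha Y_t=(R_t-I)Q_t-\alpha Y_t,
\]
since $(R_t-I)\one=0$. Then $\|X_{t+1}-X_t\|_F^2\le2\kappa_R^2\|Q_t\|_F^2+2\alpha^2\|Y_t\|_F^2$. Summing over $t$ and inserting the same bound on $\sum_t\E\|Y_t\|_F^2$ gives \eqref{eq:control-mn-recursive} with $m_\bullet=2q_\bullet$.

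I expect the main obstacle to be the mean component $v_t\one^\top Z_t$. Its $n\bar g_k$ part must be charged to $\mathcal G_K$ indexed by periods rather than iterations, and the intra-period drift must be absorbed into $\mathcal M_N$ rather than into unknown gradient norms. I will handle both by anchoring each iteration in period $k$ to $\bar x_{s_k}$ and using the telescoping estimate \eqref{eq:pflm-control-e-move-2}. A secondary point is that the squared-sum index ranges ($t\le N$ versus $t<N$) must be matched, which the nonnegativity of all terms permits.
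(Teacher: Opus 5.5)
Your proposal follows the paper's proof essentially step for step. It uses the same unrolling $Q_t=-\alpha\sum_{j<t}\Pi R_{t-1:j+1}Y_j$ with \eqref{eq:R-mixing} and Young's convolution inequality. It uses the same decomposition $Y_t=\Theta_t^Z+v_t\one^\top Z_t+(\varepsilon_t+E_t^\varepsilon)$, reusing \eqref{eq:pflm-control-rho-theta-bound} and \eqref{eq:pflm-control-e-move-2}. And it uses the same identity $X_{t+1}-X_t=(R_t-I)Q_t-\alpha Y_t$ for $\mathcal M_N$.

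Two of your bookkeeping choices, however, do not give the constants exactly as stated.

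First, within the noise group you should not use the crude split. The stated $q_\sigma=3+3M_C^2\kappa_C^2/(1-r_\star^2)$ requires the orthogonality $\E\|\varepsilon_t+E_t^\varepsilon\|_F^2=\E\|\varepsilon_t\|_F^2+\E\|E_t^\varepsilon\|_F^2$. This holds because $E_t^\varepsilon$ is $\cF_t^-$-measurable and $\E[\varepsilon_t\mid\cF_t^-]=0$. The crude split doubles $q_\sigma$.

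Second, an equal-weight three-way split of $Z_t^\top\one$ puts a factor $3$ on the node term and gives $q_Q=9v_\star^2n$ rather than $6v_\star^2n$. The paper instead first bounds $\|\one^\top Z_t\|^2\le2n^2\|\nabla f(\bar x_t)\|^2+2nL^2\|Q_t\|_F^2$. Only then does it compare $\nabla f(\bar x_t)$ with $\bar g_k$, giving effective weights $4,2,4$. This yields exactly $q_G=12v_\star^2n^2p$, the $12v_\star^2np^2$ part of $q_M$, and $q_Q=6v_\star^2n$.

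With these two adjustments your argument proves the lemma as stated.
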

\begin{proof}
We first express both $\mathcal{Q}_N$ and $\mathcal{M}_N$ in terms of $Y_t$. Since $Q_0=0$, $R_t\one=\one$, and $\Pi R_t\Pi=\Pi R_t$,
\begin{equation}
 Q_t=-\alpha\sum_{j=0}^{t-1}\Pi R_{t-1:j+1}Y_j,\qquad X_{t+1}-X_t=(R_t-I)Q_t-\alpha Y_t.\label{eq:pflm-expansion-qt}
\end{equation}
Lemma~\ref{lem:mixing} and Young's convolution inequality give
\begin{equation}
 \mathcal Q_N\le\alpha^2G_R\sum_{t=0}^{N-1}\E[\|Y_t\|_F^2],\qquad\mathcal M_N\le2\kappa_R^2\mathcal Q_N+2\alpha^2\sum_{t=0}^{N-1}\E[\|Y_t\|_F^2].\label{eq:tracker-to-discrepancy}
\end{equation}

To estimate the tracker energy, decompose $Y_t=\Theta_t^Z+v_t\one^\top Z_t+(\varepsilon_t+E_t^\varepsilon)$. The first term is bounded in \eqref{eq:pflm-control-rho-theta-bound}. For the second, smoothness and Cauchy-Schwarz yield
\[
 \|\one^\top Z_t\|^2\le2n^2\|\nabla f(\bar x_t)\|^2+2nL^2\|Q_t\|_F^2.
\]
Comparing each within-period gradient with $\bar g_k$ and using \eqref{eq:pflm-control-e-move-2} then gives
\begin{equation}
 \sum_{t=0}^{N-1}\E[\|\one^\top Z_t\|^2]\le4n^2p\mathcal{G}_K+4np^2L^2\mathcal M_N+2nL^2\mathcal Q_N.\label{eq:pflm-control-y-2}
\end{equation}
For the last term, $E_t^\varepsilon$ is $\cF_t^-$-measurable, so \eqref{eq:noise} and \eqref{eq:pflm-control-et-eps} imply
\[
\E[\|\varepsilon_t+E_t^\varepsilon\|_F^2]=\E[\|\varepsilon_t\|_F^2]+\E[\|E_t^\varepsilon\|_F^2]\le\left(1+\frac{M_C^2\kappa_C^2}{1-r_\star^2}\right)n\sigma^2.
\]
Combining the three estimates with $\|u+v+w\|^2\le3(\|u\|^2+\|v\|^2+\|w\|^2)$ gives
\begin{equation}
 \sum_{t=0}^{N-1}\E[\|Y_t\|_F^2]\le q_0H_0^2+q_G\mathcal G_K+q_ML^2\mathcal M_N+q_QL^2\mathcal Q_N+q_\sigma nN\sigma^2.\label{eq:pflm-control-y}
\end{equation}
Substitution into \eqref{eq:tracker-to-discrepancy} proves both claims.
\end{proof}
For sufficiently small $\alpha$, these recursive bounds yield nonrecursive estimates:
\begin{lemma}\label{lem:control-qn-mn}
    Under the conditions of Lemma~\ref{lem:qn-mn-recursive}, if
    \begin{align}
        \alpha\le\min\left\{\sqrt{\frac{1}{2G_Rq_QL^2}},\sqrt{\frac{1}{2m_ML^2}},\frac{1}{L}, \sqrt{\frac{1}{8(2\kappa_R^2+m_Q)G_Rq_ML^2}}\right\},\label{eq:alpha-bound-qn-mn}
    \end{align}
    then for every $K\ge1$ and $N=pK$,
    \begin{align}
        \mathcal{Q}_N\le{}&\alpha^2(\hat{Q}_0H_0^2+\hat{Q}_G\mathcal{G}_K+\hat{Q}_\sigma nN\sigma^2),\label{eq:control-qn}\\
        \mathcal{M}_N\le{}&\alpha^2(\hat{M}_0H_0^2+\hat{M}_G\mathcal{G}_K+\hat{M}_\sigma nN\sigma^2),\label{eq:control-mn}
    \end{align}
    where $\hat Q_\bullet:=[2G_R+16(2\kappa_R^2+m_Q)G_R^2q_M]q_\bullet+8q_MG_Rm_\bullet$, and $\hat{M}_\bullet:=8(2\kappa_R^2+m_Q)G_Rq_\bullet+4m_\bullet$, for $\bullet\in\{0,G,\sigma\}$. Here $H_0$ is from Lemma~\ref{lem:control-chi-e-rho-eta}, and $G_R$, $\kappa_R$, $m_\bullet$, and $q_\bullet$ are from Lemma~\ref{lem:qn-mn-recursive}.
\end{lemma}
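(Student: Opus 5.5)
The plan is to treat the two recursive inequalities of Lemma~\ref{lem:qn-mn-recursive} as a $2\times 2$ linear system in the unknowns $\mathcal Q_N$ and $\mathcal M_N$. Under the stepsize restriction \eqref{eq:alpha-bound-qn-mn}, the self-referential coefficients are absorbed. All quantities are finite because $N$ is finite and the iterates have finite second moments, so absorption is legitimate. Write $B:=H_0^2$-, $\mathcal G_K$- and $\sigma$-terms collectively, i.e.\ $S_q:=q_0H_0^2+q_G\mathcal G_K+q_\sigma nN\sigma^2$ and $S_m:=m_0H_0^2+m_G\mathcal G_K+m_\sigma nN\sigma^2=2S_q$.

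\textbf{Step 1: absorb $\mathcal Q_N$ in \eqref{eq:control-qn-recursive}.} Since $\alpha^2G_Rq_QL^2\le 1/2$, I obtain
\[
\mathcal Q_N\le 2\alpha^2G_R\bigl(S_q+q_ML^2\mathcal M_N\bigr).
\]

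\textbf{Step 2: absorb $\mathcal M_N$ in \eqref{eq:control-mn-recursive}.} Since $\alpha^2m_ML^2\le1/2$, I obtain
\[
\mathcal M_N\le 4\kappa_R^2\mathcal Q_N+2\alpha^2\bigl(S_m+m_QL^2\mathcal Q_N\bigr)\le 2(2\kappa_R^2+m_Q)\mathcal Q_N+2\alpha^2S_m.
\]
Here I used $\alpha L\le1$ to bound $2\alpha^2m_QL^2\le 2m_Q$. The constant $2(2\kappa_R^2+m_Q)$ is crude, but it matches the target form.

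\textbf{Step 3: substitute Step 1 into Step 2.} This gives
\[
\mathcal M_N\le 4(2\kappa_R^2+m_Q)\alpha^2G_R\bigl(S_q+q_ML^2\mathcal M_N\bigr)+2\alpha^2S_m.
\]
The last condition in \eqref{eq:alpha-bound-qn-mn} gives $4(2\kappa_R^2+m_Q)G_Rq_ML^2\alpha^2\le1/2$, so the $\mathcal M_N$ term on the right is absorbed:
\[
\mathcal M_N\le\alpha^2\bigl(8(2\kappa_R^2+m_Q)G_RS_q+4S_m\bigr).
\]
Expanding $S_q,S_m$ coefficientwise yields exactly $\hat M_\bullet=8(2\kappa_R^2+m_Q)G_Rq_\bullet+4m_\bullet$, which is \eqref{eq:control-mn}.

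\textbf{Step 4: plug \eqref{eq:control-mn} back into Step 1.} Using $\alpha L\le 1$ once more, so that $\alpha^2L^2\le1$, I obtain
\[
\mathcal Q_N\le\alpha^2\bigl(2G_RS_q+2G_Rq_M\hat M\bigr),
\]
with coefficients $2G_Rq_\bullet+2G_Rq_M\bigl(8(2\kappa_R^2+m_Q)G_Rq_\bullet+4m_\bullet\bigr)$. These equal $[2G_R+16(2\kappa_R^2+m_Q)G_R^2q_M]q_\bullet+8q_MG_Rm_\bullet=\hat Q_\bullet$, proving \eqref{eq:control-qn}.

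The only delicate point is Step 2. There one must use $\alpha L\le1$ to turn the $\alpha^2m_QL^2$ coefficient into the constant $m_Q$, so that the stepsize condition in Step 3 closes exactly with the stated constants. The rest is bookkeeping.
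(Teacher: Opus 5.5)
Your proposal is correct and follows the paper's proof step for step. You absorb $\mathcal Q_N$ in the first recursion and $\mathcal M_N$ in the second, using $\alpha L\le1$ to replace $\alpha^2m_QL^2$ by $m_Q$; you then substitute to close the $\mathcal M_N$ bound via the last stepsize condition and feed the result back into the $\mathcal Q_N$ bound, with the constants matching $\hat M_\bullet$ and $\hat Q_\bullet$ exactly (your remark that $\mathcal Q_N,\mathcal M_N<\infty$, which makes the absorption legitimate, is a detail the paper leaves implicit).
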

\begin{proof}
    Consider \eqref{eq:control-qn-recursive} in Lemma \ref{lem:qn-mn-recursive}, \eqref{eq:alpha-bound-qn-mn}$\Rightarrow\alpha^2G_Rq_QL^2\le1/2$ implies
    \begin{align}
        \mathcal{Q}_N\le2\alpha^2G_R(q_0H_0^2+q_G\mathcal{G}_K+q_ML^2\mathcal{M}_N+q_\sigma nN\sigma^2).\label{eq:pflm-control-qn-1}
    \end{align}
    Similarly, consider \eqref{eq:control-mn-recursive} in Lemma \ref{lem:qn-mn-recursive}, \eqref{eq:alpha-bound-qn-mn}$\Rightarrow\alpha^2m_ML^2\le1/2$ implies
    \begin{align}
        \mathcal{M}_N\le{}&4\kappa_R^2\mathcal{Q}_N+2\alpha^2(m_0H_0^2+m_G\mathcal{G}_K+m_QL^2\mathcal{Q}_N+m_\sigma nN\sigma^2)\nonumber\\
        \le{}&2\alpha^2(m_0H_0^2+m_G\mathcal{G}_K+m_\sigma nN\sigma^2)+(4\kappa_R^2+2m_Q)\mathcal{Q}_N,\label{eq:pflm-control-mn-1}
    \end{align}
    where the last inequality uses \eqref{eq:alpha-bound-qn-mn}$\Rightarrow\alpha\le1/L$. Applying \eqref{eq:pflm-control-qn-1} to \eqref{eq:pflm-control-mn-1} and using \eqref{eq:alpha-bound-qn-mn}$\Rightarrow4(2\kappa_R^2+m_Q)\alpha^2G_Rq_ML^2\le1/2$ yields \eqref{eq:control-mn}. Applying \eqref{eq:control-mn} to \eqref{eq:pflm-control-qn-1} and using $\alpha\le 1/L$ yields \eqref{eq:control-qn}.
\end{proof}
We now establish the convergence upper bound.
\begin{theorem}[Upper bound for an admissible fixed stepsize]\label{thm:upper-bound}
    Under Assumptions~\ref{ass:objective} and \ref{ass:network}, set
    \begin{align}
        \alpha_{\max}:={}&\min\left\{\frac{1}{40LH},\sqrt{\frac{1}{2G_Rq_QL^2}},\sqrt{\frac{1}{2m_ML^2}},\frac{1}{L},\sqrt{\frac{1}{8(2\kappa_R^2+m_Q)G_Rq_ML^2}},\right.\nonumber\\
        &\left.\sqrt{\frac{H^2}{42L^2\hat{B}_G}},\frac{qH}{21Ln\hat{B}_{\sigma}},\frac{q}{40LhV_\tau}\right\},\label{eq:alpha-condition-2}
    \end{align}
    where a term with a zero denominator is interpreted as $+\infty$.
    If $0<\alpha\le\alpha_{\max}$, then
    \begin{align}
        \frac{1}{T}\sum_{t=0}^{T-1}\mathbb{E}[{\color{black}\|\nabla f(\bar{x}_t)\|^2}]\le\frac{4\Delta}{h\alpha T}+\frac{4Lq\alpha\sigma^2}{h}+\frac{K_0(\alpha,T)}{T},\label{eq:thm-upper-bound}
    \end{align}
    where $\Delta:=f(x_0)-\inf_{x}f(x)$, $\hat{B}_\bullet:=(H^2\kappa_\chi^2+E_Q)\hat{Q}_\bullet+(E_M+C_{\rho,x})\hat{M}_\bullet$ for $\bullet\in\{0,G,\sigma\}$, and
    \begin{align}
        K_0(\alpha,T):={}&\frac{4Lq\alpha\sigma^2(N-T)}{h}+\frac{21pC_{B,0}}{H^2}H_0^2+\frac{4L^2p\hat{M}_G\alpha\Delta}{nh}+\frac{21L^2\alpha^2C_{B,0}\hat{M}_G}{nh^2}H_0^2\nonumber\\
        &+\frac{4L^3p^2q\hat{M}_G\alpha^3K\sigma^2}{nh}+\frac{2L^2p^2\alpha^2\hat{M}_0}{n}H_0^2+2L^2p^2\alpha^2\hat{M}_\sigma N\sigma^2,\label{eq:def-k0}
    \end{align}
    where $C_{B,0}:=C_{\rho,0}+\hat{B}_0$, $K:=\lceil T/p\rceil$ and $N:=pK$.
\end{theorem}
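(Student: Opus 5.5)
The argument telescopes the descent inequality of Lemma~\ref{lem:descent} over $k=0,\dots,K-1$ with $K=\lceil T/p\rceil$ and $N=pK\ge T$. The constraint $\alpha\le 1/(40LH)$ is the first entry of $\alpha_{\max}$, so the lemma applies. Since $\omega_0=X_0^\top\pi_0-\alpha\eta_0=x_0$ (because $E_0^\varepsilon=0$) and $f(\omega_K)\ge\inf f$, summing gives
\[
\frac{5\alpha H}{8}\mathcal G_K\le\Delta+\frac58\alpha^2pqL\sigma^2K+5\alpha HL^2\kappa_\chi^2\mathcal Q_N+\frac{21\alpha}{4H}\Bigl(L^2(E_Q\mathcal Q_N+E_M\mathcal M_N)+C_{\rho,0}H_0^2+C_{\rho,x}L^2\mathcal M_N\Bigr)+5\alpha^3HL^2KV_\tau\sigma^2.
\]
Here I use \eqref{eq:control-chi}--\eqref{eq:control-eta} from Lemma~\ref{lem:control-chi-e-rho-eta}. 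The remaining conditions in $\alpha_{\max}$ are exactly those of Lemma~\ref{lem:control-qn-mn}. I then substitute \eqref{eq:control-qn}--\eqref{eq:control-mn}, which collapses all network-error terms into $\alpha^2\bigl(\hat B_0H_0^2+\hat B_G\mathcal G_K+\hat B_\sigma nN\sigma^2\bigr)$ multiplied by $21\alpha L^2/(4H^2)\cdot H$-type factors, matching the definition of $\hat B_\bullet$ (the factor $H^2\kappa_\chi^2$ comes from $5\alpha HL^2\kappa_\chi^2\le\frac{21\alpha}{4H}H^2\kappa_\chi^2L^2$).

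Next I absorb terms. The condition $\alpha^2\le H^2/(42L^2\hat B_G)$ makes the $\mathcal G_K$ contribution at most $\frac{\alpha H}{8}\mathcal G_K$, leaving $\frac{\alpha H}{2}\mathcal G_K$ on the left. The condition $\alpha\le qH/(21Ln\hat B_\sigma)$ bounds the network-noise term $\frac{21\alpha^3L^2}{4H}\hat B_\sigma nN\sigma^2$ by a multiple of $\alpha^2LqN\sigma^2/4$. Similarly, $\alpha\le q/(40LhV_\tau)$ turns $5\alpha^3HL^2KV_\tau\sigma^2$ into $O(\alpha^2 pqLK\sigma^2)$. Dividing by $\alpha HK/2=\alpha hN/2$ yields
\[
\frac1K\mathcal G_K\lesssim\frac{2\Delta}{h\alpha N}+\frac{2Lq\alpha\sigma^2}{h}+\frac{C\,H_0^2}{\alpha hN}\alpha(\cdots),
\]
and the $H_0^2$ terms give the $\frac{21pC_{B,0}}{H^2}H_0^2$ piece of $K_0$.

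This only controls the block-start gradients $\bar g_k$, whereas $\mathcal A_T$ averages over all $t$. So I will write $\|\nabla f(\bar x_t)\|^2\le2\|\bar g_k\|^2+2L^2\|\bar x_t-\bar x_{s_k}\|^2$ and use \eqref{eq:pflm-control-e-move-2}. That gives
\[
\sum_{t<N}\E\|\nabla f(\bar x_t)\|^2\le2p\,\mathcal G_K+\frac{2L^2p^2}{n}\mathcal M_N.
\]
I bound $\mathcal M_N$ again via \eqref{eq:control-mn}. Its $\hat M_G\mathcal G_K$ piece is fed back through the already-obtained bound on $\mathcal G_K$, which produces the terms $\frac{4L^2p\hat M_G\alpha\Delta}{nh}$, $\frac{21L^2\alpha^2C_{B,0}\hat M_G}{nh^2}H_0^2$, and $\frac{4L^3p^2q\hat M_G\alpha^3K\sigma^2}{nh}$. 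The $\hat M_0$ and $\hat M_\sigma$ pieces give the last two terms of $K_0$. Finally, I replace $\frac1T\sum_{t<T}$ by $\frac1T\sum_{t<N}$, which is valid since all terms are nonnegative. Converting $N$ to $T$ in the leading noise term leaves the correction $\frac{4Lq\alpha\sigma^2(N-T)}{h}$, and $1/N\le1/T$ handles the $\Delta$ term.

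The main obstacle is the constant bookkeeping in the absorption step. I have to verify that each entry of $\alpha_{\max}$ gives precisely the factor needed so the final leading coefficients are $4\Delta/(h\alpha T)$ and $4Lq\alpha\sigma^2/h$, rather than larger constants. I expect to use slack here, for example combining $\frac58+\frac14+\frac18$ of the $\alpha^2pqL\sigma^2K$ terms into at most $1\cdot$, then doubling from the gradient-comparison step to reach the stated 4. If the constants do not close exactly, I would tighten the Young splits in the gradient comparison, for instance using $(1+\epsilon)$ weights, before changing $\alpha_{\max}$.
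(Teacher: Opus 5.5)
Your proposal is correct and follows the paper's proof essentially step for step. You telescope Lemma~\ref{lem:descent} and insert Lemmas~\ref{lem:control-chi-e-rho-eta} and \ref{lem:control-qn-mn}, using $\alpha L\le1$ to fold the $\alpha^2L^2\hat B_0H_0^2$ term into $C_{B,0}$. You then absorb via the last three entries of $\alpha_{\max}$ to get $\mathcal G_K\le\frac{2\Delta}{\alpha H}+\frac{21C_{B,0}}{2H^2}H_0^2+\frac{2Lq\alpha K}{h}\sigma^2$, and pass to all iterates through $T\mathcal A_T\le2p\mathcal G_K+\frac{2L^2p^2}{n}\mathcal M_N$ together with \eqref{eq:control-mn}. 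The constants close exactly as you anticipated: $\frac58+\frac14+\frac18=1$, and the factor $2p$ with $H=ph$ produces the $4$'s, so no tightening of the Young splits is needed.
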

\begin{proof}
Summing the descent estimate \eqref{eq:lm-descent}, applying Lemmas~\ref{lem:control-chi-e-rho-eta} and \ref{lem:control-qn-mn}, and using $\alpha L\le 1$, we obtain
\begin{align*}
 \E[f(\omega_K)-f(\omega_0)]&\le\frac{21\alpha C_{B,0}}{4H}H_0^2-\left(\frac{5\alpha H}{8}-\frac{21\alpha^3L^2\hat B_G}{4H}\right)\mathcal G_K\\
 &\quad+\left(\frac58\alpha^2pqLK+\frac{21\alpha^3L^2\hat B_\sigma nN}{4H}+5\alpha^3HL^2KV_\tau\right)\sigma^2.
\end{align*}
The last three restrictions in \eqref{eq:alpha-condition-2} respectively bound the loss in descent by $\alpha H/8$ and the two extra noise terms by $\alpha^2pqLK/4$ and $\alpha^2pqLK/8$. {\color{black}Since $\omega_0=x_0$ and $f(\omega_K)\ge\inf f$, we obtain} 
\begin{equation}
 \mathcal G_K\le\frac{2\Delta}{\alpha H}+\frac{21C_{B,0}}{2H^2}H_0^2+\frac{2Lq\alpha K}{h}\sigma^2.\label{eq:pfthm-upper-bound-2}
\end{equation}
To return to all iterations, smoothness and \eqref{eq:pflm-control-e-move-2} give
\[
 T\mathcal A_T\le2p\mathcal G_K+\frac{2L^2p^2}{n}\mathcal M_N,\qquad N=p\lceil T/p\rceil.
\]
\textcolor{black}{Applying \eqref{eq:control-mn} to the second term gives}
\[
 T\mathcal A_T\le\left(2p+\frac{2L^2p^2\alpha^2\hat M_G}{n}\right)\mathcal G_K+\frac{2L^2p^2\alpha^2\hat M_0}{n}H_0^2+2L^2p^2\alpha^2\hat M_\sigma N\sigma^2.
\]
Substituting \eqref{eq:pfthm-upper-bound-2} and collecting terms with $H=ph$ gives \eqref{eq:thm-upper-bound}.
\end{proof}
\begin{corollary}[Theorem \ref{thm:upper}]\label{cor:upper}
Under Assumptions \ref{ass:objective} and \ref{ass:network}, if we choose
\begin{align}
    \alpha=\begin{cases}
        \min\left\{\alpha_{\max},\sqrt{\frac{\Delta}{Lq\sigma^2T}}\right\},&\sigma>0,\\
        \alpha_{\max},&\sigma=0,
    \end{cases}
\end{align}
with $\alpha_{\max}$ as defined in \eqref{eq:alpha-condition-2}, the following convergence guarantee holds for time-varying stochastic Push-Pull when $T\ge p$:
\[
\mathcal{A}_T=\mathcal{O}\left(\sqrt{\frac{L\Delta\sigma^2}{\neff T}}+\frac{\tilde{K}_0(\mathcal{N},L\Delta,H_0)}{T}\right),
\]
where $\tilde{K}_0$ depends only on $\mathcal{N}$, $L\Delta$, and $H_0$.
\end{corollary}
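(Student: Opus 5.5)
We plug the prescribed stepsize into Theorem~\ref{thm:upper-bound} and bound each of the three terms in \eqref{eq:thm-upper-bound} separately. Since $\alpha\le\alpha_{\max}$ by construction, Theorem~\ref{thm:upper-bound} applies. We use $h^2/q=\neff$ throughout, and $h>0$ by Lemma~\ref{lem:neff}.

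\emph{The two leading terms.} We split according to which branch of the minimum is active.

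If $\sigma>0$ and $\alpha=\sqrt{\Delta/(Lq\sigma^2T)}\le\alpha_{\max}$, then
\[
\frac{4\Delta}{h\alpha T}+\frac{4Lq\alpha\sigma^2}{h}=\frac{8}{h}\sqrt{\frac{L\Delta q\sigma^2}{T}}=8\sigma\sqrt{\frac{L\Delta}{\neff T}}.
\]

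If instead $\alpha=\alpha_{\max}<\sqrt{\Delta/(Lq\sigma^2T)}$, the noise term satisfies $4Lq\alpha\sigma^2/h\le 4\sigma\sqrt{L\Delta/(\neff T)}$. The bias term is $4\Delta/(h\alpha_{\max}T)$. Since $\alpha_{\max}$ depends only on $\mathcal N$ and $L$ (every constraint in \eqref{eq:alpha-condition-2} scales like $1/L$ times network constants), the quantity $\Delta/(h\alpha_{\max})$ has the form $L\Delta$ times a network constant. It can therefore be absorbed into $\tilde K_0/T$.

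The case $\sigma=0$ is identical to this second case.

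\emph{The remainder $K_0(\alpha,T)/T$.} We bound $K_0$ from \eqref{eq:def-k0} term by term, using $\alpha\le\alpha_{\max}\lesssim 1/L$, $N-T<p$, $K\le T/p+1$, and $N\le T+p\le 2T$ (valid because $T\ge p$).

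The terms with no $\sigma$ dependence are each bounded by $(\text{network const})\cdot(H_0^2+L\Delta)$, using $\alpha L\lesssim 1$. For the term $4L^2p\hat M_G\alpha\Delta/(nh)$, note that $L^2\alpha\Delta\le L\Delta\cdot(L\alpha)$.

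The $\sigma$-dependent terms are:
\begin{itemize}
\item $4Lq\alpha\sigma^2(N-T)/h\le 4Lq\alpha p\sigma^2/h$;
\item $L^3\alpha^3K\sigma^2$;
\item $L^2\alpha^2N\sigma^2$.
\end{itemize}
Divided by $T$, the first is $O(\alpha\sigma^2/T)$. The other two are $O(L^2\alpha^2\sigma^2)$ up to network constants.

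\emph{Main obstacle: the $\alpha^2\sigma^2$ terms.} These are not $O(1/T)$ uniformly in $\sigma$ when $\alpha=\alpha_{\max}$. We handle them by cases.

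In the first case, $\alpha^2\le\Delta/(Lq\sigma^2T)$, so $L^2\alpha^2\sigma^2\le L\Delta/(qT)$. This is an $L\Delta$-type term over $T$.

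In the second case, $\alpha_{\max}^2\sigma^2<\Delta/(LqT)$ by the defining inequality, which gives the same bound.

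The term $\alpha\sigma^2/T$ is handled the same way: $\alpha\sigma^2\le\sigma\sqrt{\Delta/(LqT)}$, so $\alpha\sigma^2/T$ is of lower order than the leading term. It can be absorbed into the leading term after multiplying by $L$ and using $\alpha L\lesssim1$.

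Collecting all contributions gives $\mathcal A_T=O\bigl(\sigma\sqrt{L\Delta/(\neff T)}+\tilde K_0/T\bigr)$, with $\tilde K_0$ depending only on $\mathcal N$, $L\Delta$ and $H_0$. The only care needed is the bookkeeping of $L$-scaling: every network constant must be dimension-consistent, so that $\tilde K_0$ depends on $L$ only through $L\Delta$ (together with $H_0$).
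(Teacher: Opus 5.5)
Your proposal is correct and follows essentially the same route as the paper. The paper gives no separate proof: the corollary is obtained by substituting the prescribed stepsize into Theorem~\ref{thm:upper-bound}, and your bookkeeping covers the three needed facts — $\alpha_{\max}$ equals a network constant divided by $L$, $\alpha\le\sqrt{\Delta/(Lq\sigma^2T)}$ controls the $\alpha^2\sigma^2$ terms, and $N\le 2T$, $K\le 2T/p$ hold because $T\ge p$.

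One minor point: the $(N-T)$ term is absorbed into the leading term using only $p/T\le1$ and $\sqrt q/h=1/\sqrt{\neff}$, so the appeal to $\alpha L\lesssim1$ at that step is unnecessary.
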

\section{Convergence lower bound}\label{sec:lower}
We prove Theorem~\ref{thm:lower-preview} for time-varying stochastic Push-Pull. Specifically, we consider the objectives $f_i(x)=\frac{L}{2}\|x\|^2$ for $x\in\mathbb{R}$ and $i=1,2,\cdots,n$, an initialization $x_0\ne 0$, and independent noises $\varepsilon_{i,t}$ satisfying $\mathbb{E}[\varepsilon_{i,t}]=0$ and $\mathbb{E}[\varepsilon_{i,t}^2]=\sigma^2>0$. Then $Z_t=LX_t$, and the updates take a linear form. By \eqref{eq:pushpull}, \eqref{eq:def-et}, and \eqref{eq:prop-et-z-eps}, we have
\begin{equation}
\begin{aligned}
    X_{t+1}={}&R_tX_t-\alpha(E_t+L X_t+\varepsilon_t)=(R_t-\alpha LI)X_t-\alpha I E_t-\alpha I\varepsilon_t,\\
    E_{t+1}={}&C_t E_t+(C_t-I)(LX_t+\varepsilon_t)=L(C_t-I)X_t+C_t E_t+(C_t-I)\varepsilon_t,
\end{aligned}\label{eq:recursion-x-e}
\end{equation}
or equivalently,
\begin{align}
    S_{t+1}={}&\underbrace{\begin{pmatrix}R_t-\alpha LI & -\alpha I\\ L(C_t-I) & C_t\end{pmatrix}}_{:=M_t(\alpha)}S_t+\underbrace{\begin{pmatrix}-\alpha I\\ C_t-I\end{pmatrix}}_{:=N_t(\alpha)}\varepsilon_t,\label{eq:recursion-s}
\end{align}
where $S_t:=(X_t^\top,E_t^\top)^\top\in\mathcal{E}:=\{(X^\top,E^\top)^\top\mid X,E\in\mathbb{R}^{n},\text{ and }\one^\top E=0\}$. Recursion~\eqref{eq:recursion-s} expands to
\begin{align}
    S_{kp+r}=M_{r-1:0}S_{kp}+\sum_{j=0}^{r-1}M_{r-1:j+1}N_j\varepsilon_{kp+j},\label{eq:expansion-s-kpr}
\end{align}
and thus
\begin{align}
    S_{(k+1)p}=A(\alpha)S_{kp}+\sum_{r=0}^{p-1}B_r(\alpha)\varepsilon_{kp+r},\label{eq:expansion-s-kp}
\end{align}
where $A(\alpha):=M_{p-1:0}(\alpha)|_{\mathcal E}$ and $B_r(\alpha):=M_{p-1:r+1}(\alpha)N_r(\alpha)$.
\begin{lemma}[Eigen properties of $A(\alpha)$]\label{lem:prop-A}
    For sufficiently small $|\alpha|$, there exist a real-analytic eigenvalue $\gamma(\alpha)$ and real-analytic right and left eigenvectors $\err(\alpha),\ell(\alpha)\in\mathcal{E}$ such that
    \begin{align}
        A\err=\gamma \err,\ \ell^\top A=\gamma\ell^\top,\ a^\top \err=\ell^\top \err=1,\label{eq:lm-prop-A-eigen}
    \end{align}
    \textcolor{black}{with values at $\alpha=0$}
    \begin{align}
        \gamma(0)=1,\ \err(0)=(\one^\top,L(nv_0-\one)^\top)^\top,\ \ell(0)=(\pi_0^\top,\zero^\top)^\top,\label{eq:lm-prop-A-init}
    \end{align}
    where $a:=(n^{-1}\one^\top,\zero^\top)^\top$.
\end{lemma}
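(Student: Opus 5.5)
Since $A(\alpha)=M_{p-1:0}(\alpha)|_{\mathcal E}$ is a polynomial, hence real-analytic, function of $\alpha$, the plan is to show that $\gamma=1$ is a \emph{simple} eigenvalue of $A(0)$ with the stated eigenvectors. Analytic perturbation theory for simple eigenvalues (implicit function theorem applied to the characteristic polynomial, or Kato's theory) then supplies real-analytic $\gamma(\alpha)$, $\err(\alpha)$, $\ell(\alpha)$ near $0$. Analyticity needs only $\alpha$ real near $0$, including negative $\alpha$, where the matrices are still polynomial in $\alpha$.

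\textbf{Step 1: invariance of $\mathcal E$ and the structure at $\alpha=0$.} Column stochasticity gives $\one^\top L(C_t-I)=0$ and $\one^\top C_t=\one^\top$, so $M_t(\alpha)$ maps $\mathcal E$ into itself and $A(\alpha)$ is well defined. At $\alpha=0$,
\[
M_t(0)=\begin{pmatrix}R_t&0\\ L(C_t-I)&C_t\end{pmatrix}
\]
is block lower triangular. Hence $A(0)$ is block lower triangular with diagonal blocks $R_{p-1:0}$ on $\R^n$ and $C_{p-1:0}|_{\one^\perp}$ on $\{E:\one^\top E=0\}$. By Lemma~\ref{lem:mixing}, the spectral radius of $C_{p-1:0}|_{\one^\perp}$ is at most $r_\star^p<1$. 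Since $R_{p-1:0}$ is primitive and row stochastic, Perron--Frobenius makes $1$ a simple eigenvalue of $R_{p-1:0}$, with all other eigenvalues of modulus less than $1$. Therefore $1$ is an algebraically simple eigenvalue of $A(0)$, and the rest of the spectrum lies strictly inside the unit disk.

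\textbf{Step 2: identifying the eigenvectors at $\alpha=0$.} For the left eigenvector, $(\pi_0^\top,0)M_{p-1:0}(0)=(\pi_0^\top R_{p-1:0},0)=(\pi_0^\top,0)$ by block triangularity and \eqref{eq:perron-left}. For the right eigenvector, take $X=\one$. Then $X$ stays equal to $\one$ under each $R_t$, and the $E$-component evolves as $E\mapsto C_tE+L(C_t-I)\one$. Setting $E_t=L(nv_t-\one)$ gives
\[
C_tE_t+L(C_t-I)\one=L(nC_tv_t-\one)=L(nv_{t+1}-\one)
\]
by \eqref{eq:perron-right}. Periodicity then returns $E_p=E_0$, so $\err(0)$ is fixed by $A(0)$, and $\one^\top E_0=L(n-n)=0$ places it in $\mathcal E$. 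For the normalizations, $a^\top\err(0)=n^{-1}\one^\top\one=1$ and $\ell(0)^\top\err(0)=\pi_0^\top\one=1$.

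\textbf{Step 3: analytic continuation and normalization.} For small $|\alpha|$, simplicity persists and the Riesz projector
\[
P(\alpha)=\frac{1}{2\pi i}\oint_{|z-1|=\epsilon}(z-A(\alpha))^{-1}\,dz
\]
is analytic and of rank one. Put $\gamma(\alpha)=\operatorname{tr}(A(\alpha)P(\alpha))$, which is real because $A(\alpha)$ is real. Take the unnormalized right vector $\tilde\err(\alpha)=P(\alpha)\err(0)$ and left vector $\tilde\ell(\alpha)^\top=\ell(0)^\top P(\alpha)$; both are nonzero near $0$. Then normalize: $\err=\tilde\err/(a^\top\tilde\err)$ and $\ell=\tilde\ell/(\tilde\ell^\top\err)$. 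The denominators equal $1$ at $\alpha=0$, so they stay nonzero nearby and analyticity is preserved. The vectors lie in $\mathcal E$ after identifying $\mathcal E$ with its coordinates; for the left vector, use the dual pairing restricted to $\mathcal E$, which is consistent with $\ell(0)$ having zero $E$-part.

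The main obstacle is Step~1, namely the spectral separation. Simplicity requires that the $E$-block contribute no eigenvalue $1$, and this is exactly where the restriction to $\mathcal E$ is essential: on all of $\R^n$, $C_{p-1:0}$ itself has eigenvalue $1$, which would produce a double eigenvalue.
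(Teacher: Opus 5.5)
Your proposal is correct and follows essentially the same route as the paper: the block lower-triangular structure of $A(0)$ on $\mathcal E$, direct verification of the Perron eigenvectors via \eqref{eq:perron-left} and \eqref{eq:perron-right}, spectral separation from Lemma~\ref{lem:mixing}, and analytic perturbation of a simple eigenvalue (you build it with the Riesz projector, where the paper cites standard perturbation theorems). The only minor difference is in how the rest of the spectrum is controlled: the paper bounds the spectral radius on the invariant complement $\mathcal H=\{(X^\top,E^\top)^\top\in\mathcal E:\pi_0^\top X=0\}$ by $r_\star^p$ using \eqref{eq:R-mixing-variant}, instead of invoking Perron--Frobenius for $R_{p-1:0}$, and this gives the explicit rate \eqref{eq:pflm-rho-t0} that is reused in Lemma~\ref{lem:control-f-exp}.
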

\begin{proof}
At $\alpha=0$ the matrices are block lower triangular. The Perron relations $R_r\one=\one$, $C_rv_r=v_{r+1}$, and $\pi_{r+1}^{\top}R_r=\pi_r^{\top}$ give, by one-period iteration,
\begin{equation*}
 A(0)\err(0)=\err(0),\qquad \ell(0)^{\top}A(0)=\ell(0)^{\top},
\end{equation*}
for the vectors in \eqref{eq:lm-prop-A-init}.  Moreover, $a^{\top}\err(0)=\ell(0)^{\top}\err(0)=1$ and both vectors belong to $\mathcal E$.

Let
\[
 \mathcal H:=\{(X^{\top},E^{\top})^{\top}\in\mathcal E:\pi_0^{\top}X=0\}.
\]
Then $\mathcal E=\operatorname{span}\{\err(0)\}\oplus\mathcal H$. The restriction $A(0)|_{\mathcal H}$ is block lower triangular with diagonal blocks $R_{p-1:0}|_{\pi_0^\perp}$ and $C_{p-1:0}|_{\one^\perp}$.  By Lemma~\ref{lem:mixing}, their $k$-th powers have norm at most $M_Rr_\star^{kp}$ and $M_Cr_\star^{kp}$, respectively. Their spectra therefore lie in the disk of radius $r_\star^p$, and the triangular structure gives
\begin{equation}
 \rho\bigl(A(0)|_{\mathcal H}\bigr)\le r_\star^p<1.\label{eq:pflm-rho-t0}
\end{equation}
Thus $1$ is an isolated simple eigenvalue of $A(0)$.

The map $\alpha\mapsto A(\alpha)$ is a finite-dimensional matrix polynomial. The eigenvalue and eigenvector perturbation theorems \cite[Theorems 1 and 2]{greenbaum2019} therefore yield, for sufficiently small $|\alpha|$, a unique real analytic eigenvalue $\gamma(\alpha)$ near $1$ and corresponding analytic right and
left eigenvectors near $\err(0)$ and $\ell(0)$.  Since
$a^{\top}\err(0)=\ell(0)^{\top}\err(0)=1$, these eigenvectors can be
uniquely normalized by
\[
 a^{\top}\err(\alpha)=1,\qquad\ell(\alpha)^{\top}\err(\alpha)=1.
\]
The resulting objects satisfy \eqref{eq:lm-prop-A-eigen} and \eqref{eq:lm-prop-A-init}.
\end{proof}
For sufficiently small $|\alpha|$, Lemma~\ref{lem:prop-A} separates the leading eigenspace, whose eigenvalue is close to $1$, from a complementary space whose spectral radius is uniformly below $1$. We thus decompose the state $S_t$ as
\begin{align}
    S_t=\err\underbrace{\ell^\top S_t}_{:=z_t} + \underbrace{(I-\err\ell^\top)S_t}_{:=V_t},\nonumber
\end{align}
where $\err$ and $\ell$ are as defined in Lemma \ref{lem:prop-A}. To bound $\nabla f(\bar{x}_t)=L\bar{x}_t$, it suffices to bound $\bar{x}_t$ with the following recursion:
\begin{align}
    \bar{x}_{kp+r}={}&a^\top S_{kp+r}=a^\top M_{r-1:0}S_{kp}+\sum_{j=0}^{r-1}a^\top M_{r-1:j+1}N_j\varepsilon_{kp+j}\nonumber\\
    ={}&\underbrace{a^\top M_{r-1:0}\err z_{kp}}_{:=\bar{x}_{kp+r}^{(1)}}+\underbrace{a^\top M_{r-1:0}V_{kp}}_{:=\bar{x}_{kp+r}^{(2)}}+\underbrace{\sum_{j=0}^{r-1}a^\top M_{r-1:{j+1}}N_j\varepsilon_{kp+j}}_{:=\bar{x}_{kp+r}^{(3)}}.\label{eq:recursion-x-bar}
\end{align}
To control the first term, we begin by controlling $z_t$. By \eqref{eq:expansion-s-kp} and $\ell^\top A=\gamma\ell^\top$, we have
\begin{align}
    z_{(k+1)p}={}&\gamma z_{kp}+\sum_{r=0}^{p-1}\ell^\top B_r\varepsilon_{kp+r}.\label{eq:recursion-zkp}
\end{align}
We therefore expand $\gamma$ and $\ell^\top B_r$ in the next lemma.
\begin{lemma}\label{lem:expansion-gamma-beta}
    As $\alpha\to0$,
    \begin{align}
        \gamma(\alpha)={}&1-Lph\alpha+\mathcal{O}(\alpha^2),\label{eq:lm-expansion-gamma-2}\\
        \ell(\alpha)^\top B_r(\alpha)={}&-\alpha b_r^\top+\mathcal{O}(\alpha^2),\label{eq:lm-expansion-beta}
    \end{align}
    where $\gamma,\ell$ are as defined in Lemma \ref{lem:prop-A}.
\end{lemma}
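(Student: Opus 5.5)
The plan is to treat both expansions as first-order analytic perturbation calculations around $\alpha=0$, using Lemma~\ref{lem:prop-A}. Write $M_t'=\frac{d}{d\alpha}M_t(\alpha)=\begin{pmatrix}-LI&-I\\0&0\end{pmatrix}$ and $N_r'=\begin{pmatrix}-I\\0\end{pmatrix}$. Two facts about the unperturbed dynamics drive everything.

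\emph{Left propagation.} The Perron relation \eqref{eq:perron-left} gives $(\pi_{t+1}^\top,\zero^\top)M_t(0)=(\pi_t^\top,\zero^\top)$. Since $\pi_p=\pi_0$, this yields $\ell(0)^\top M_{p-1:r+1}(0)=(\pi_{r+1}^\top,\zero^\top)$.

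\emph{Right propagation.} Using $R_t\one=\one$ and $C_tv_t=v_{t+1}$, one checks $M_t(0)(\one;L(nv_t-\one))=(\one;L(nv_{t+1}-\one))$. Hence $M_{r-1:0}(0)\err(0)=(\one;L(nv_r-\one))$.

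Also, because $M_t(0)$ has zero top-right block, $M_{j-1:i}(0)(\zero;W)=(\zero;C_{j-1:i}W)$.

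\textbf{Expansion of $\gamma$.} Since $\ell(0)^\top\err(0)=1$, the standard simple-eigenvalue formula gives $\gamma'(0)=\ell(0)^\top A'(0)\err(0)$, where $A'(0)=\sum_{r}M_{p-1:r+1}(0)M_r'M_{r-1:0}(0)$. By the two propagation identities, the $r$-th summand equals
\[
(\pi_{r+1}^\top,\zero^\top)\,M_r'\,(\one;L(nv_r-\one))=-L-L\pi_{r+1}^\top(nv_r-\one)=-Ln\pi_{r+1}^\top v_r .
\]
Summing over $r$ and invoking Lemma~\ref{lem:neff} gives $\gamma'(0)=-Lph$. Analyticity then yields \eqref{eq:lm-expansion-gamma-2}.

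\textbf{Expansion of $\ell^\top B_r$.} At $\alpha=0$ we have $\ell(0)^\top B_r(0)=(\pi_{r+1}^\top,\zero^\top)(\zero;C_r-I)=0$, so it suffices to compute the derivative. Differentiating the product gives three contributions.

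\emph{(i) Derivative of $N_r$.} This gives $(\pi_{r+1}^\top,\zero^\top)N_r'=-\pi_{r+1}^\top$.

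\emph{(ii) Derivatives inside $M_{p-1:r+1}$.} These give
\[
\sum_{j=r+1}^{p-1}(\pi_{j+1}^\top,\zero^\top)M_j'(\zero;C_{j-1:r+1}(C_r-I))=-\sum_{j=r+1}^{p-1}\pi_{j+1}^\top C_{j-1:r+1}(C_r-I).
\]
This is exactly the within-period part of the series \eqref{eq:br-direct}.

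\emph{(iii) Derivative of $\ell$.} This gives $\ell'(0)^\top(\zero;C_{p-1:r+1}(C_r-I))$. It only involves the $E$-block $\ell_E'$ of $\ell'(0)$, tested on zero-sum matrices.

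\textbf{Main step: identifying $\ell_E'$.} I expect this to be the main obstacle, and it should supply the tail $j\ge p$ of the series. Differentiate $\ell^\top A=\gamma\ell^\top$ at $0$ and test on $(\zero;W)$ with $\one^\top W=0$. Since $\ell(0)^\top(\zero;W)=0$, the $\gamma'$ term drops, and we get
\[
\ell_E'^\top W-\ell_E'^\top C_{p-1:0}W=\ell(0)^\top A'(0)(\zero;W)=-\sum_{j=0}^{p-1}\pi_{j+1}^\top C_{j-1:0}W .
\]
The map $W\mapsto\mathcal D_0(W)^\top$ solves this equation, by iterating the Poisson identity \eqref{eq:poisson} over one period together with periodicity of $\mathcal D$. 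Hence $-\mathcal D_0^\top$ is a solution. The solution is unique on $\one^\perp$ because $I-\widehat P^\top$ is invertible, exactly as in Remark~\ref{rem:finite-evaluation}. Note that the normalization of $\ell$ does not affect this restricted functional.

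Therefore term (iii) equals $-\mathcal D_0(C_{p-1:r+1}(C_r-I))^\top$. Using periodicity $\mathcal D_0=\mathcal D_p$, this is $-\sum_{j\ge p}\pi_{j+1}^\top C_{j-1:r+1}(C_r-I)$. Adding (i)–(iii) reproduces \eqref{eq:br-direct}, giving $-b_r^\top$. Analyticity of $\ell$ and $B_r$ then gives \eqref{eq:lm-expansion-beta}.
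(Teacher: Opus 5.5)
Your proof is correct, but it takes a different route from the paper's. The paper never expands $A'(0)$ or $B_r'(0)$ by the product rule. Instead it introduces the $\alpha$-dependent functional $\mathcal L_t(X;E)=\pi_t^\top X-\alpha\mathcal D_t(E)$. For this functional the Poisson identity gives the exact relation $\mathcal L_{t+1}S_{t+1}=\mathcal L_tS_t-\alpha Lb_t^\top X_t-\alpha b_t^\top\varepsilon_t$ for every $\alpha$; this is the $\nu_t$ invariant from Lemma~\ref{lem:prop-omega}. Evaluating it along the eigenvector trajectory gives the expansion of $\gamma$ directly from the definition $h=\frac1p\sum_r b_r^\top\one$. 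Evaluating it on an impulse gives $\mathcal L_0B_r=-\alpha b_r^\top+\mathcal O(\alpha^2)$, with the full series $b_r$ appearing at once. It then remains to show $(\ell^\top-\mathcal L_0)B_r=\mathcal O(\alpha^2)$, i.e.\ that $\ell'(0)$ agrees with $\mathcal L_0'(0)\colon(X;E)\mapsto-\mathcal D_0(E)$ on zero-sum tracker inputs. The paper obtains this by differentiating $\ell^\top A=\gamma\ell^\top$ and using that $I-A(0)$ maps $(\zero;\tilde\eta)$ onto $(\zero;\eta)$ for $\tilde\eta=\sum_m C_{mp-1:0}\eta$.

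You instead do explicit first-order perturbation theory. Your two propagation identities give $\gamma'(0)=-Ln\sum_r\pi_{r+1}^\top v_r$, which you convert to $-Lph$ through Lemma~\ref{lem:neff}. The product rule then splits $\frac{d}{d\alpha}(\ell^\top B_r)(0)$ into the immediate term, the within-period part of \eqref{eq:br-direct}, and the tail; the tail comes from solving the one-period Poisson equation for $\ell_E'$. The crux is shared: on zero-sum tracker inputs, $\ell'(0)$ acts as $-\mathcal D_0$, with uniqueness from the contraction of $C_{p-1:0}$ on $\one^\perp$. The paper's invariant explains why exactly $b_r$ appears and avoids computing $A'(0)$. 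Your version is more elementary and shows which part of the series comes from within the period and which from the eigenvector correction.

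One small slip: iterating \eqref{eq:poisson} over a period gives $\mathcal D_0(W)^\top-\mathcal D_0(C_{p-1:0}W)^\top=+\sum_{j<p}\pi_{j+1}^\top C_{j-1:0}W$. So $W\mapsto\mathcal D_0(W)^\top$ solves your equation only with the right-hand side negated. Your next sentence, that $-\mathcal D_0^\top$ is the solution, is the correct statement.
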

\begin{proof}
    Consider the linear operator $\mathcal{L}_t(\alpha):\mathcal{E}\rightarrow\mathbb{R}$ such that
    \begin{align}
    \mathcal{L}_t((X^\top,E^\top)^\top)=\pi_t^\top X-\alpha\mathcal{D}_t(E).\label{eq:def-operator-l}
    \end{align}
    We have
    \begin{align}
        \mathcal{L}_{t+1}S_{t+1}={}&\nu_{t+1}=\nu_t-\alpha b_t^\top G_t=\mathcal{L}_tS_t-\alpha Lb_t^\top X_t-\alpha b_t^\top\varepsilon_t,\label{eq:pflm-recursion-ls}
    \end{align}
    {\color{black}where $\nu_t:=X_t^\top\pi_t-\alpha\mathcal{D}_t(E_t)$ is as defined} in the proof of Lemma \ref{lem:prop-omega}, and the second equality uses \eqref{eq:pflm-prop-nu}. To prove \eqref{eq:lm-expansion-gamma-2}, take $S_0=\err$ and $\varepsilon_t\equiv\zero$. Then
    \begin{align}
        (\gamma-1)\mathcal{L}_0\err={}&\mathcal{L}_0(A\err)-\mathcal{L}_0\err=\mathcal{L}_pS_p-\mathcal{L}_0S_0=-\alpha L\sum_{r=0}^{p-1}b_r^\top X_r{\color{black},}\nonumber
    \end{align}
    where the first equality uses $A\err=\gamma \err$ and the linearity of $\mathcal{L}_0$, the second equality uses the periodicity of $\mathcal{L}_t$, and the last equality uses \eqref{eq:pflm-recursion-ls}. Since $\mathcal{L}_0\err=1+\mathcal{O}(\alpha)$ and $X_r=\one+\mathcal{O}(\alpha)$ for $0\le r\le p-1$,
    \begin{align}
        \gamma-1=\frac{-\alpha L\sum_{r=0}^{p-1}b_r^\top(\one+\mathcal{O}(\alpha))}{1+\mathcal{O}(\alpha)}=-\alpha L\sum_{r=0}^{p-1}b_r^\top\one+\mathcal{O}(\alpha^2)=-Lph\alpha+\mathcal{O}(\alpha^2),
    \end{align}
    where the last equality uses \eqref{eq:intro-h-q}, proving \eqref{eq:lm-expansion-gamma-2}. To prove \eqref{eq:lm-expansion-beta}, we consider the following decomposition:
    \begin{align}
        \ell^\top B_r=\mathcal{L}_0B_r+(\ell^\top-\mathcal{L}_0)B_r.\label{eq:pflm-beta-decomposition}
    \end{align}
    For the first term, we consider the case where $S_0=\zero$ and $\varepsilon_t=\delta_{rt}\eta$, with $\delta_{rt}=1$ if $r=t$ else $0$, and $\eta$ being an arbitrary $\mathbb{R}^n$ vector. In this case, \eqref{eq:expansion-s-kp} gives $S_p=B_r\eta$, thus
    \begin{align}
        \mathcal{L}_0B_r\eta=\mathcal{L}_pS_p-\mathcal{L}_0S_0=-\alpha L\sum_{t=r+1}^{p-1}b_t^\top X_t-\alpha b_r^\top\eta=-\alpha b_r^\top\eta+\mathcal{O}(\alpha^2\|\eta\|),\label{eq:pflm-beta-first-1}
    \end{align}
    where the second equality uses \eqref{eq:pflm-recursion-ls}, and the last equality uses $\|X_t\|=\mathcal{O}(\alpha\|\eta\|)$ for $t=r+1,\cdots,p-1$ because $S_{r+1}=(-\alpha\eta^\top,\eta^\top(C_r^\top-I))^\top$ and
    \begin{align}
        M_t(\alpha)\begin{pmatrix}\mathcal{O}(\alpha\|\eta\|)\\\mathcal{O}(\|\eta\|)\end{pmatrix}=\begin{pmatrix}\mathcal{O}(\alpha\|\eta\|)\\\mathcal{O}(\|\eta\|)\end{pmatrix}, \ \forall t.\label{eq:pflm-m-alpha-eta}
    \end{align}
    Since $\eta$ is arbitrary, \eqref{eq:pflm-beta-first-1} implies
    \begin{align}
        \mathcal{L}_0B_r=-\alpha b_r^\top+\mathcal{O}(\alpha^2).\label{eq:pflm-beta-first}
    \end{align}
    For the second term, we consider the case where $\varepsilon_t\equiv\zero$ and $S_0=(\zero^\top,\eta^\top)^\top$ with arbitrary $\eta\in\one^\perp$. \eqref{eq:pflm-m-alpha-eta} also gives
    $\|X_t\|=\mathcal{O}(\alpha\|\eta\|)$, thus
    \begin{align}
        \mathcal{L}_0(A-I)S_0=\mathcal{L}_pS_p-\mathcal{L}_0S_0=-\alpha L\sum_{r=0}^{p-1}b_r^\top X_r=\mathcal{O}(\alpha^2\|\eta\|),
    \end{align}
    where the first equality uses $\mathcal{L}_p=\mathcal{L}_0$ and $S_p=AS_0$, and the second equality uses \eqref{eq:pflm-recursion-ls}. This implies
    \begin{align}
        0=\mathcal{L}_0'(0)(A(0)-I)S_0+\mathcal{L}_0(0)A'(0)S_0,\label{eq:pflm-beta-second-1}
    \end{align}
    where $\mathcal{L}_0'(0)$ and $A'(0)$ are well defined because $\mathcal{L}_0$ and $A$ are both polynomials of $\alpha$. On the other hand, $\ell^\top A=\gamma\ell^\top$ implies that
    \begin{align}
        0={}&\ell'(0)^\top A(0)+\ell(0)^\top A'(0)-\gamma'(0)\ell(0)^\top-\gamma(0)\ell'(0)^\top\nonumber\\
        ={}&\ell'(0)^\top(A(0)-I)+\ell(0)^\top A'(0)-\gamma'(0)\ell(0)^\top.\label{eq:pflm-beta-second-2}
    \end{align}
    Both derivatives exist by the analyticity in Lemma~\ref{lem:prop-A}. Combining \eqref{eq:pflm-beta-second-1} and \eqref{eq:pflm-beta-second-2}, we have
    \begin{align}
        (\ell'(0)^\top-\mathcal{L}_0'(0))(A(0)-I)S_0={}&(\mathcal{L}_0(0)-\ell(0)^\top) A'(0)S_0+\gamma'(0)\ell(0)^\top S_0=0,\label{eq:pflm-beta-second-3}
    \end{align}
    where the second equality uses $\mathcal{L}_0(0)=\ell(0)^\top$ and $\ell(0)^\top S_0=0$ under the considered case. Let $\tilde{\eta}=\sum_{m=0}^{\infty}C_{mp-1:0}\eta\in\one^\perp$. \textcolor{black}{Lemma~\ref{lem:mixing} implies that} $(I-C_{p-1:0})\tilde{\eta}=\eta$, and thus $(I-A(0))(\zero^\top,\tilde{\eta}^\top)^\top=(\zero^\top,\tilde{\eta}^\top(I-C_{p-1:0})^\top)^\top=S_0$. Consequently, \eqref{eq:pflm-beta-second-3} and the arbitrariness of $\eta$ imply $(\ell'(0)^\top-\mathcal{L}'_0(0))(\zero^\top,\eta^\top)^\top=0$ \textcolor{black}{for all $\eta\in\one^\perp$}. \textcolor{black}{Hence, for all $\eta\in\mathbb{R}^n$,}
    \begin{align}
        &(\ell^\top-\mathcal{L}_0)B_r\eta\nonumber\\
        ={}&[\alpha(\ell'(0)^\top-\mathcal{L}_0'(0))+\mathcal{O}(\alpha^2)][B_r(0)+\mathcal{O}(\alpha)]\eta\nonumber\\
        ={}&\alpha(\ell'(0)^\top-\mathcal{L}_0'(0))(\zero^\top,\eta^\top (C_r^\top-I)(C_{p-1:r+1})^\top)^\top+\mathcal{O}(\alpha^2\|\eta\|)=\mathcal{O}(\alpha^2\|\eta\|).\label{eq:pflm-beta-second-4}
    \end{align}
    Since $\eta$ is arbitrary, \eqref{eq:pflm-beta-second-4} implies
    \begin{align}
        (\ell^\top-\mathcal{L}_0)B_r=\mathcal{O}(\alpha^2).\label{eq:pflm-beta-second}
    \end{align}
    Applying \eqref{eq:pflm-beta-first} and \eqref{eq:pflm-beta-second} to \eqref{eq:pflm-beta-decomposition} yields \eqref{eq:lm-expansion-beta}.
\end{proof}
With Lemma~\ref{lem:expansion-gamma-beta}, the first component reduces to a scalar auto-regression. Set $c_r(\alpha):=a^\top M_{r-1:0}(\alpha)\err(\alpha)=1+\mathcal{O}(\alpha)$ and $z_0=\ell(\alpha)^\top S_0=(1+\mathcal{O}(\alpha))x_0$. Expanding \eqref{eq:recursion-zkp} gives
\[
 z_{kp}=\gamma^kz_0+\sum_{j=0}^{k-1}\sum_{t=0}^{p-1}\gamma^{k-1-j}\ell^\top B_t\varepsilon_{jp+t},
\]
so that
\begin{equation}
 \E[\bar x_{kp+r}^{(1)}]=c_r\gamma^kz_0,\qquad\operatorname{Var}(\bar x_{kp+r}^{(1)})=c_r^2\sigma^2\sum_{j=0}^{k-1}\gamma^{2j}\sum_{t=0}^{p-1}\|\ell^\top B_t\|^2.\label{eq:expectation-xbar-1}
\end{equation}
For $N=pK$, the geometric sums yield
\begin{align}
 \frac{L^2}{N}\sum_{k=0}^{K-1}\sum_{r=0}^{p-1}(\E[\bar x_{kp+r}^{(1)}])^2&=\frac{L^2z_0^2}{N}\frac{1-\gamma^{2K}}{1-\gamma^2}\sum_{r<p}c_r^2\nonumber\\
 &=\frac{\Delta}{h\alpha N}+\mathcal{O}\!\left(\frac1N+\frac{e^{-Lh\alpha N}}{\alpha N}\right),\label{eq:xbar-1-square}\\
 \frac{L^2}{N}\sum_{k=0}^{K-1}\sum_{r=0}^{p-1}\operatorname{Var}(\bar x_{kp+r}^{(1)})&=\frac{L^2\sigma^2}{p}\left(\frac1{1-\gamma^2}-\frac{1-\gamma^{2K}}{K(1-\gamma^2)^2}\right)\sum_{r<p}c_r^2\sum_{t<p}\|\ell^\top B_t\|^2\nonumber\\
 &=\frac{Lq\alpha\sigma^2}{2h}+\mathcal{O}(\alpha^2+N^{-1}),\label{eq:xbar-1-var}
\end{align}
as $\alpha\rightarrow0$ and $\alpha N\to\infty$, where we use $1-\gamma^2=2Lph\alpha+\mathcal{O}(\alpha^2)$, $\sum_{r<p}c_r^2=p+\mathcal{O}(\alpha)$, and $\sum_{t<p}\|\ell^\top B_t\|^2=pq\alpha^2+\mathcal{O}(\alpha^3)$, and  $\gamma^{2K}\le e^{-Lh\alpha N}$ for small positive $\alpha$.

For the second component, define $P=\err\ell^\top$ and $F=A-\gamma P$ on $\mathcal E$. Since $FP=PF=0$, projection of \eqref{eq:expansion-s-kp} gives
\begin{equation}
 V_{kp}=F^kV_0+\sum_{j<k}\sum_{r<p}F^{k-1-j}(I-P)B_r\varepsilon_{jp+r}.\label{eq:expansion-vkp}
\end{equation}
The following lemma uniformly controls this component.
\begin{lemma}\label{lem:control-f-exp}
    There exist constants $C_F>0$ and $0<\theta<1$ such that, for all sufficiently small $|\alpha|$ and all $j\ge0$,
    \begin{align}
        \norm{F^j}\le C_F\theta^j,\quad\text{and}\quad\norm{F(\alpha)^j-F(0)^j}\le C_F\theta^j|\alpha|.\label{eq:lm-control-f-exp}
    \end{align}
\end{lemma}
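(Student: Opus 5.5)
The plan is a uniform spectral-gap estimate: combine analytic perturbation of the spectral projection with a resolvent (Cauchy-integral) representation of $F^j$. From the proof of Lemma~\ref{lem:prop-A}, the eigenvalue $1$ of $A(0)$ is simple and isolated, and $\rho(A(0)|_{\mathcal H})\le r_\star^p<1$. Note that $\mathcal H=\ker(\ell(0)^\top)$ is the range of $I-P(0)$.

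\textbf{Step 1: fix the contour.} Fix $\theta_0$ with $r_\star^p<\theta_0<1$ and set $\theta:=(1+\theta_0)/2$. Since $F(0)=A(0)-P(0)$ acts as $0$ on $\operatorname{span}\{\err(0)\}$ and as $A(0)$ on $\mathcal H$, its spectrum lies in $\{0\}\cup\sigma(A(0)|_{\mathcal H})$. This set is contained in the disk of radius $r_\star^p<\theta_0$.

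\textbf{Step 2: continuity of $F$.} The entries of $A(\alpha)$ are polynomial in $\alpha$, and $\gamma,\err,\ell$ are real analytic by Lemma~\ref{lem:prop-A}. Hence $F(\alpha)=A(\alpha)-\gamma(\alpha)\err(\alpha)\ell(\alpha)^\top$ is analytic near $0$. In particular $\|F(\alpha)-F(0)\|\le c|\alpha|$ for $|\alpha|\le\alpha_0$.

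\textbf{Step 3: uniform resolvent bound on the circle.} On the circle $\Gamma=\{|z|=\theta_0\}$, the resolvent $(zI-F(0))^{-1}$ exists and satisfies $\sup_{z\in\Gamma}\|(zI-F(0))^{-1}\|=:m<\infty$. This follows from compactness of $\Gamma$ and continuity of the resolvent. If $|\alpha|\le\min\{\alpha_0,1/(2cm)\}$, a Neumann series gives
\[
\|(zI-F(\alpha))^{-1}\|\le 2m,
\]
and the second resolvent identity gives
\[
\|(zI-F(\alpha))^{-1}-(zI-F(0))^{-1}\|\le 2m^2c|\alpha|,
\]
both uniformly on $\Gamma$. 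In particular $\sigma(F(\alpha))$ stays inside $\Gamma$.

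\textbf{Step 4: the estimate for $F^j$.} By the Dunford--Cauchy formula,
\[
F(\alpha)^j=\frac1{2\pi i}\oint_\Gamma z^j(zI-F(\alpha))^{-1}\,dz .
\]
Taking norms gives $\|F^j\|\le 2m\theta_0^{\,j+1}\le C_F\theta^j$. Subtracting the same formula at $\alpha=0$ gives
\[
\|F(\alpha)^j-F(0)^j\|\le \theta_0^{\,j+1}\,2m^2c\,|\alpha|\le C_F\theta^j|\alpha| .
\]
We may even keep $\theta=\theta_0$. Here $C_F:=\max\{2m,2m^2c\}\theta_0$, and $\|\cdot\|$ is any fixed norm on $\mathcal E$.

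\textbf{Main obstacle.} The delicate point is uniformity: a naive argument through $\rho(F(\alpha))<1$ together with Gelfand's formula yields only $\alpha$-dependent constants. Moreover, a power-series bound on $\|F^j\|$ in terms of $\|F\|$ can fail because $F(0)$ need not be normal. The contour representation avoids both issues, since it reduces everything to a resolvent bound on a fixed compact set. A minor technical check is that all operators act on the invariant subspace $\mathcal E$, where $F$ is well defined.
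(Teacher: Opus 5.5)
Your proof is correct, but it takes a different route from the paper.

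\textbf{The paper's route.} The paper avoids complex analysis altogether. It picks a fixed power $m$ with $\|F(0)^m\|\le\theta_0^m/2$. Continuity of the single matrix $F(\alpha)^m$ then gives $\|F(\alpha)^m\|\le\theta_0^m$ for small $|\alpha|$. Writing $j=km+r$ yields the uniform bound on $\|F(\alpha)^j\|$. The difference bound comes from the telescoping identity $F(\alpha)^j-F(0)^j=\sum_i F(\alpha)^{j-1-i}[F(\alpha)-F(0)]F(0)^i$. This produces a factor $j\theta_0^{j}$, which must be absorbed by passing to a larger $\theta\in(\theta_0,1)$.

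\textbf{What your route buys and costs.} Your Riesz--Dunford representation on a fixed contour derives both estimates from one uniform resolvent bound. It also gives the sharper difference estimate $\mathcal{O}(\theta_0^{j}|\alpha|)$ with no polynomial factor, so $\theta=\theta_0$ indeed suffices. The price is the functional calculus on the complexified $\mathcal E$. You also need the entire spectrum of $F(\alpha)$ to lie inside $\Gamma$, whereas the paper's argument never has to locate that spectrum.

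\textbf{A step to tighten.} Your ``in particular'' is slightly too quick. Invertibility of $zI-F(\alpha)$ for $z\in\Gamma$ only excludes eigenvalues \emph{on} $\Gamma$, not outside it, and the Cauchy formula needs the latter. The fix is to take $m:=\sup_{|z|\ge\theta_0}\|(zI-F(0))^{-1}\|$. This is finite because the resolvent is continuous on this closed set and satisfies $\|(zI-F(0))^{-1}\|\le 1/(|z|-\|F(0)\|)$ for large $|z|$. The same Neumann argument then shows every $|z|\ge\theta_0$ lies in the resolvent set of $F(\alpha)$. Alternatively, note that the eigenvalues of $F(\alpha)$ depend continuously on $\alpha$ and lie in the disk of radius $r_\star^p<\theta_0$ at $\alpha=0$.
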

\begin{proof}
By \eqref{eq:pflm-rho-t0}, $F(0)$ is exponentially stable on $\mathcal E$. Choose $\theta_0\in(r_\star^p,1)$ and $m$ so that $\|F(0)^m\|\le \theta_0^m/2$. Since $F(\alpha)$ is analytic, continuity gives, for sufficiently small $|\alpha|$, $\|F(\alpha)^m\|\le\theta_0^m$. Writing $j=km+r$ with $0\le r<m$ and bounding the finitely many remainders yields $\|F(\alpha)^j\|\le C_1\theta_0^j$. The telescoping identity
\[
 F(\alpha)^j-F(0)^j=\sum_{i=0}^{j-1}F(\alpha)^{j-1-i}[F(\alpha)-F(0)]F(0)^i
\]
and $F(\alpha)-F(0)=\mathcal{O}(\alpha)$ therefore give $\|F(\alpha)^j-F(0)^j\|\le C_2|\alpha|j\theta_0^j$. Choosing any $\theta\in(\theta_0,1)$ and using $j\theta_0^j\le C_3\theta^j$ proves both estimates.
\end{proof}
We now bound the second term in \eqref{eq:recursion-x-bar}. Following \eqref{eq:expansion-vkp}, we have
\begin{align}
    \mathbb{E}[\bar{x}_{kp+r}^{(2)}]={}&a^\top M_{r-1:0}F^kV_0=a^\top M_{r-1:0}F^k(I-P)S_0.\label{eq:expectation-xbar-2}
\end{align}
Using $a^\top M_{r-1:0}(0)F(0)^k(I-P(0))S_0=0$, $M_{r-1:0}(\alpha)=M_{r-1:0}(0)+\mathcal{O}(\alpha)$, $P(\alpha)=P(0)+\mathcal{O}(\alpha)$, and Lemma~\ref{lem:control-f-exp}, we obtain
\begin{align}
    \frac{L^2}{N}\sum_{k=0}^{K-1}\sum_{r=0}^{p-1}(\mathbb{E}[\bar{x}_{kp+r}^{(2)}])^2=\mathcal{O}(\alpha^2),\qquad(\alpha\rightarrow0,\ \alpha N\rightarrow\infty)\label{eq:xbar-2-square}
\end{align}
Similarly, noting that $a^\top M_{r-1:0}(0)F(0)^{k-1-j}\textcolor{black}{(I-P(0))B_t(0)=\zero^\top}$, we have
\begin{align}
    \frac{L^2}{N}\sum_{k=0}^{K-1}\sum_{r=0}^{p-1}\operatorname{Var}(\bar{x}_{kp+r}^{(2)})={}&\frac{L^2}{N}\sum_{k=0}^{K-1}\sum_{r=0}^{p-1}\sum_{j=0}^{k-1}\sum_{t=0}^{p-1}\|a^\top M_{r-1:0}F^{k-1-j}(I-P)B_t\|_F^2\sigma^2\nonumber\\
    ={}&\mathcal{O}(\alpha^2).\qquad(\alpha\rightarrow0,\alpha N\rightarrow\infty).\label{eq:xbar-2-var}
\end{align}
The remaining term $\bar{x}_{kp+r}^{(3)}$ in \eqref{eq:recursion-x-bar} has zero mean, and
\begin{align}
    \frac{L^2}{N}\sum_{k=0}^{K-1}\sum_{r=0}^{p-1}\operatorname{Var}(\bar{x}_{kp+r}^{(3)})={}&\frac{L^2}{N}\sum_{k=0}^{K-1}\sum_{r=0}^{p-1}\sum_{j=0}^{r-1}\|a^\top M_{r-1:j+1}N_j\|^2\sigma^2\nonumber\\
    ={}&\mathcal{O}(\alpha^2).\qquad(\alpha\rightarrow0,\alpha N\rightarrow\infty)\label{eq:xbar-3-var}
\end{align}
We now combine these bounds to estimate the averaged gradient.
\begin{lemma}\label{lem:lowerbound-small-alpha}
    It holds that
    \begin{align}
        \mathcal{A}_T(\alpha)=\frac{\Delta}{h\alpha T}+\frac{Lq\sigma^2}{2h}\alpha+o\left(\frac{1}{\alpha T}+\alpha\right),\qquad(\alpha\rightarrow0,\alpha T\rightarrow\infty)
    \end{align}
\end{lemma}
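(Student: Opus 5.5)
The plan is to assemble the three-component decomposition \eqref{eq:recursion-x-bar} of $\bar x_t$ and use the estimates already obtained for each piece. Since $\nabla f(\bar x_t)=L\bar x_t$, we have $\E[\|\nabla f(\bar x_t)\|^2]=L^2\bigl((\E\bar x_t)^2+\operatorname{Var}(\bar x_t)\bigr)$. First I reduce the horizon from $T$ to a multiple of $p$. Put $K=\lceil T/p\rceil$ and $N=pK$, so $N-T<p$. Each summand $L^2\E[\bar x_t^2]$ is $\mathcal O(1)$ uniformly in $t$ for small $\alpha$: the mean is bounded by the geometric decay of $\gamma^k$ and $F^k$, and the variance is $\mathcal O(\alpha)$ by \eqref{eq:xbar-1-var}. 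Therefore replacing $\frac1T\sum_{t<T}$ by $\frac1N\sum_{t<N}$ costs $\mathcal O(1/T)$, and this is $o(1/(\alpha T))$ because $\alpha\to0$.

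Next I expand $\E[\bar x_t^2]$ with $\bar x_t=\bar x^{(1)}_t+\bar x^{(2)}_t+\bar x^{(3)}_t$. For the squared means, \eqref{eq:xbar-1-square} gives the main term $\Delta/(h\alpha N)$, using $L^2z_0^2=2L\Delta(1+\mathcal O(\alpha))$. Component $(2)$ contributes $\mathcal O(\alpha^2)$ by \eqref{eq:xbar-2-square}, and component $(3)$ has zero mean. For the variances, \eqref{eq:xbar-1-var} gives $Lq\alpha\sigma^2/(2h)+\mathcal O(\alpha^2+N^{-1})$, while \eqref{eq:xbar-2-var} and \eqref{eq:xbar-3-var} give $\mathcal O(\alpha^2)$.

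The cross terms I handle by Cauchy--Schwarz on the time averages rather than exactly. Writing $\mathcal S_i:=\frac{L^2}{N}\sum_t\E[(\bar x_t^{(i)})^2]$, we have
\[
\Bigl|\tfrac{L^2}{N}\textstyle\sum_t\E[\bar x^{(i)}_t\bar x^{(j)}_t]\Bigr|\le\sqrt{\mathcal S_i\mathcal S_j}.
\]
With $\mathcal S_1=\mathcal O(1/(\alpha N)+\alpha)$ and $\mathcal S_2,\mathcal S_3=\mathcal O(\alpha^2)$, every cross term is $\mathcal O\bigl(\alpha\sqrt{1/(\alpha N)+\alpha}\bigr)$. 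Since $\sqrt{1/(\alpha N)+\alpha}\to0$, this is $o(\alpha)$. The cross term between components $(2)$ and $(3)$ is $\mathcal O(\alpha^2)$.

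Finally I collect the error terms and show each is $o(1/(\alpha T)+\alpha)$. The term $\mathcal O(1/N)$ equals $\alpha\cdot\mathcal O(1/(\alpha N))$, hence it is $o(1/(\alpha T))$. The terms $\mathcal O(\alpha^2)$ are $o(\alpha)$. The term $e^{-Lh\alpha N}/(\alpha N)$ is $o(1/(\alpha N))$ because $\alpha N\to\infty$. Replacing $N$ by $T$ in the main term costs $\Delta(N-T)/(h\alpha NT)=o(1/(\alpha T))$. This yields the claimed expansion.

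The step I expect to be the main obstacle is uniformity. The $\mathcal O(\cdot)$ constants in Lemmas~\ref{lem:expansion-gamma-beta} and~\ref{lem:control-f-exp} and in \eqref{eq:xbar-1-square}--\eqref{eq:xbar-3-var} must hold uniformly over the joint limit $\alpha\to0$, $\alpha T\to\infty$, not merely for fixed $\alpha$. I would check that each of these bounds is stated for all sufficiently small $\alpha$, with constants independent of $N$; the geometric-series identities are exact, so only the analytic expansions in $\alpha$ need this check.
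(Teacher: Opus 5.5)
Your proposal is correct and follows essentially the same route as the paper. It combines \eqref{eq:xbar-1-square}--\eqref{eq:xbar-3-var}, bounds the cross terms by Cauchy--Schwarz on the time averages to get $\mathcal O(\alpha^2)+\mathcal O(\alpha\sqrt{B_N})=o(B_N)$ with $B_N=\alpha+(\alpha N)^{-1}$, and passes from complete periods to arbitrary $T$ via a uniform bound $\sup_t\E[\bar x_t^2]=\mathcal O(x_0^2+\alpha\sigma^2)$; the paper rounds down to $N=p\lfloor T/p\rfloor$ rather than up, which is immaterial. One small fix: read the per-time $\mathcal O(\alpha)$ variance bound off the explicit formula \eqref{eq:expectation-xbar-1} and Lemma~\ref{lem:control-f-exp}, not off the time-averaged estimate \eqref{eq:xbar-1-var}.
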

\begin{proof}
Let $N=pK$ and $\alpha\rightarrow0$ with $\alpha N\to\infty$. \eqref{eq:xbar-1-square}-\eqref{eq:xbar-3-var} give, on writing $B_N:=\alpha+(\alpha N)^{-1}$,
\[
 \frac{L^2}{N}\sum_{t<N}\E[(\bar x_t^{(1)})^2]=\frac{\Delta}{h\alpha N}+\frac{Lq\sigma^2\alpha}{2h}+o(B_N),\qquad\frac{L^2}{N}\sum_{t<N}\E[(\bar x_t^{(2)}+\bar x_t^{(3)})^2]=\mathcal{O}(\alpha^2).
\]
Cauchy-Schwarz bounds the cross term as
\[
 \left|\frac{L^2}{N}\sum_{t=0}^{N-1}\E[\bar x_t^2-(\bar x_t^{(1)})^2]\right|\le \mathcal{O}(\alpha^2)+\mathcal{O}(\alpha\sqrt{B_N})=o(B_N),
\]
since $B_N\ge\alpha$ and $\alpha\to0$. This proves the expansion for complete periods.

For arbitrary $T$, take $N=p\lfloor T/p\rfloor$. Similar to \eqref{eq:xbar-1-square}-\eqref{eq:xbar-3-var}, we have $\sup_t\E[\bar x_t^2]=\mathcal{O}(x_0^2+\alpha\sigma^2)$ uniformly for small positive $\alpha$. Consequently,
\[
 \left|\frac1T\sum_{t=0}^{T-1}\E[\bar x_t^2]-\frac1N\sum_{t=0}^{N-1}\E[\bar x_t^2]\right|\le\frac{p}{TN}\sum_{t<N}\E[\bar x_t^2]+\frac1T\sum_{N\le t<T}\E[\bar x_t^2]=\mathcal{O}(T^{-1}),
\]
and it suffices to note that $B_N=\alpha+(\alpha T)^{-1}+o(\alpha+(\alpha T)^{-1})$.
\end{proof}
\begin{proof}[Proof of Theorem~\ref{thm:lower-preview}]
Let $m_T=\inf_{\alpha\ge0}\mathcal A_T(\alpha)$ \textcolor{black}{and choose $\alpha_T\ge0$ such that $m_T\le\mathcal A_T(\alpha_T)\le m_T+T^{-1}$.} Lemma~\ref{lem:lowerbound-small-alpha}, applied to $\alpha_T^\star=\sqrt{2\Delta/(Lq\sigma^2T)}$, gives
\[
 m_T\le\sigma\sqrt{\frac{2L\Delta}{\neff T}}+o(T^{-1/2}), \qquad \mathcal A_T(\alpha_T)=\mathcal{O}(T^{-1/2}).
\]
To apply the same expansion to the near-minimizer, we must verify $\alpha_T\to0$ and $\alpha_TT\to\infty$. The fresh noise in \eqref{eq:recursion-x-e} gives
\[
 L^2\E[\operatorname{Var}(\bar x_{t+1}\mid\cF_t^-)]=L^2\alpha^2\sigma^2/n,\qquad\mathcal A_T(\alpha)\ge\frac{T-1}{T}\frac{L^2\alpha^2\sigma^2}{n}.
\]
Thus $\alpha_T\to0$.  If $\alpha_TT\le M$ on a subsequence, then $\log\gamma=-Lph\alpha_T+\mathcal{O}(\alpha_T^2)$ and \eqref{eq:expectation-xbar-1} together with \eqref{eq:expectation-xbar-2} imply, uniformly for $t=kp+r<T$,
\[
 \E[\bar x_t]=e^{-Lph\alpha_Tk+\mathcal{O}(k\alpha_T^2)}(1+\mathcal{O}(\alpha_T))x_0+\mathcal{O}(\alpha_T\theta^k|x_0|).
\]
Since $k\alpha_T^2\le M\alpha_T/p\to0$, its absolute value is bounded below by $e^{-LhM}|x_0|/2$ for all sufficiently large horizons on that subsequence, a contradiction.

We can now apply Lemma~\ref{lem:lowerbound-small-alpha} to $\alpha_T$:
\[
\mathcal{A}_T(\alpha_T)=\Omega\left(\frac{1}{\alpha_T T}+\alpha_T\right).
\]
$\mathcal A_T(\alpha_T)=\mathcal{O}(T^{-1/2})$ then implies $\alpha_T+(\alpha_T T)^{-1}=\mathcal{O}(T^{-1/2})$, so
\begin{align*}
 m_T&\ge\mathcal A_T(\alpha_T)-T^{-1}=\frac{\Delta}{h\alpha_TT}+\frac{Lq\sigma^2\alpha_T}{2h}+o(T^{-1/2})\\
 &\ge\sigma\sqrt{\frac{2L\Delta}{\neff T}}+o(T^{-1/2}).
\end{align*}
Together with the upper bound at $\alpha_T^\star$, this proves \eqref{eq:lower-main}.
\end{proof}

\begingroup
\setlength{\textfloatsep}{12pt plus 2pt minus 2pt}
\setlength{\intextsep}{6pt plus 2pt minus 2pt}
\section{Numerical verification}\label{sec:numerics}

We use several numerical experiments to validate the theory.
\subsection{Logistic regression with a finite horizon}\label{subsec:log-fin}
Following the synthetic-data protocol in \cite{liangluoyuan2026}, we use the
nonconvex logistic objective
\begin{equation}\label{eq:logistic-objective}
 f_i(x)=\frac{1}{m}\sum_{\ell=1}^m
 \log\bigl(1+\exp(-y_{i\ell}a_{i\ell}^\top x)\bigr)
 +\rho\sum_{j=1}^d\frac{x_j^2}{1+x_j^2}.
\end{equation}
The data are generated with $x_{\rm gen}\sim\mathcal N(0,I_d)$,
$a_\ell\sim\mathcal N(0,I_d)$, and {\color{black}$y_\ell\in\{-1,+1\}$ satisfying}
$\Pr(y_\ell=1\mid a_\ell)=
\operatorname{sigmoid}(a_\ell^\top x_{\rm gen})$.
We set $d=10$ and $\rho=0.01$. Each worker independently draws a
with-replacement minibatch of size 32 per round.

\paragraph{Physical and effective worker counts}
We use three period-two network families with
\[
 \neff=\frac{n}{1+\kappa n},\qquad
 \kappa\in\{1/4,1/16,1/64\},
\]
and a static family with $\neff=n$. The effective counts of the periodic
families approach 4, 16, and 64 as $n$ grows. All four families have $h=1$; their matrices are
given in Appendix~\ref{app:numerical-details}.
We fix $T=32000$ and vary $n\in\{2,4,8,16,32,64,128\}$.
A fixed data set of 3840 samples is partitioned equally among the workers
for each $n$, so the global empirical objective remains the same.
All runs start from $x_0=0$.

For each network and worker count, we select a constant stepsize from a
common search grid using the mean $\mathcal A_T$ over six trajectories.
We then evaluate the selected stepsize on 12 independent trajectories.
The full gradient at $\bar x_t$ is evaluated at every iteration to compute
$\mathcal{A}_T$. The search grid and refinement rule are given
in Appendix~\ref{app:numerical-details}.

\begin{figure}[!htbp]
\setlength{\abovecaptionskip}{6pt}
\setlength{\belowcaptionskip}{0pt}
\centering
\includegraphics[width=.70\textwidth]{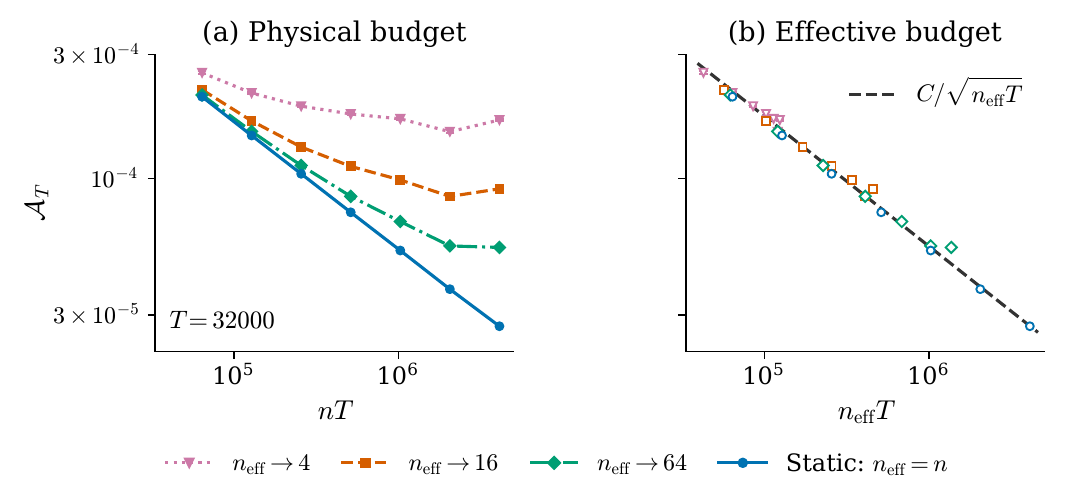}
\caption{Mean stationarity error on \eqref{eq:logistic-objective} at
$T=32000$, plotted against \textup{(a)} $nT$ and \textup{(b)} $\neff T$.
The 28 settings comprise four network families and
$n\in\{2,4,8,16,32,64,128\}$. Each point averages 12 evaluation trajectories
after stepsize selection on six separate trajectories. The dashed line fits
$C/\sqrt{\neff T}$ with one common $C$.}
\label{fig:logistic-neff}
\end{figure}

In Figure~\ref{fig:logistic-neff}(a), the static error continues to decrease
as $n$ grows, while the periodic curves flatten at different worker counts.
Using $\neff T$ in panel (b) brings the four families close to a common
curve. The fixed-slope models $C/\sqrt{\neff T}$ and $C/\sqrt{nT}$,
each with one fitted coefficient, have log-RMSE $0.0492$ and $0.4979$,
respectively. A log-log regression on $\neff T$ gives slope $-0.4848$.

\subsection{Logistic regression with a fixed stepsize}
We consider the logistic objective in \eqref{eq:logistic-objective}, with $d=10$, $\rho=0.01$, 3840 samples, and minibatches of size 32.
For each $n\in\{4,6,8,10,12,16\}$, we use three period-four networks
with low, intermediate, and high effective counts, together with the
ideal static network $R=C=\one\one^\top/n$.
The network selection procedure is given in Appendix~\ref{app:numerical-details}. We use the fixed stepsize $\alpha=1/(20h)$, with $h$ defined in \eqref{eq:intro-h-q}, so that $\alpha h=1/20$ across networks. With this choice, the term $4Lq\alpha\sigma^2/h$ in \eqref{eq:thm-upper-bound} equals $L\sigma^2/(5\neff)$. Fig.~\ref{fig:physical-versus-effective} shows squared-gradient-norm curves over the first 5000 steps for all 24 distributed configurations and a single-node baseline. Networks with the same physical worker count have different noise levels: larger $\neff$ gives lower noise, while some networks with $\neff<1$ perform worse than the single-node baseline. A log-log regression of the final-quarter mean squared gradient norm on $\neff$ gives a slope of $-0.977$ and a log-RMSE of $0.0316$, consistent with $1/\neff$ scaling.

\begin{figure}[!htbp]
\setlength{\abovecaptionskip}{6pt}
\setlength{\belowcaptionskip}{0pt}
\centering
\includegraphics[width=.96\textwidth]{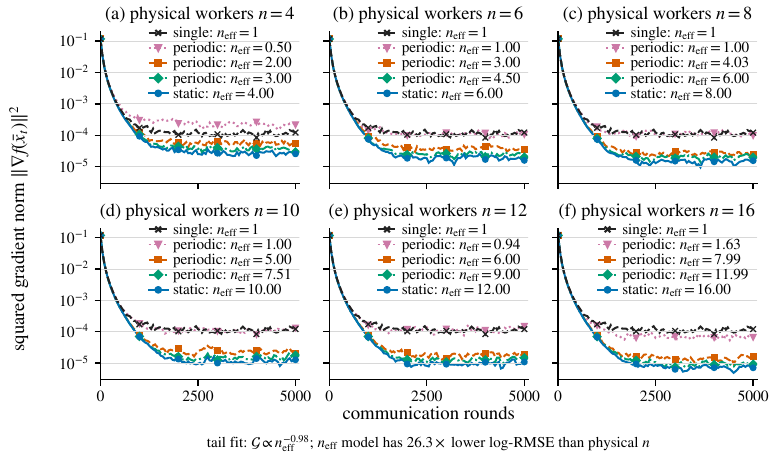}
\caption{Squared gradient norm curves for solving \eqref{eq:logistic-objective}. Each panel fixes the physical count $n$ and plots the mean squared full-gradient norm at the arithmetic-average iterate over 12 trajectories.}
\label{fig:physical-versus-effective}
\end{figure}

\FloatBarrier
{\color{black}
\subsection{Quadratic objective from Theorem \ref{thm:periodic-counterexample}}

We compare the period-two networks in \eqref{eq:uniform-network} with the static networks $R_t=C_t=C$ on the quadratic problem of Theorem~\ref{thm:periodic-counterexample}. We use $n\in\{8,128,512\}$, corresponding to $s\in\{2,8,16\}$, and $T\in\{2^{10},2^{11},\ldots,2^{22}\}$. For each network and each $(n,T)$, we evaluate $\mathcal A_{n,T}(\alpha)$ from exact second moments and optimize the constant stepsize numerically as described in Appendix~\ref{app:numerical-details}. Figure~\ref{fig:numerical-checks}(a) shows that increasing $n$ reduces the error on static networks, while the period-two curves remain close. At $T=2^{16}$, Figure~\ref{fig:numerical-checks}(b) shows that the static minima move to larger stepsizes and lower errors as $n$ increases, while the period-two minima remain nearly unchanged.

\begin{figure}[!htbp]
\setlength{\abovecaptionskip}{6pt}
\setlength{\belowcaptionskip}{0pt}
\centering
\includegraphics[width=.73\textwidth]{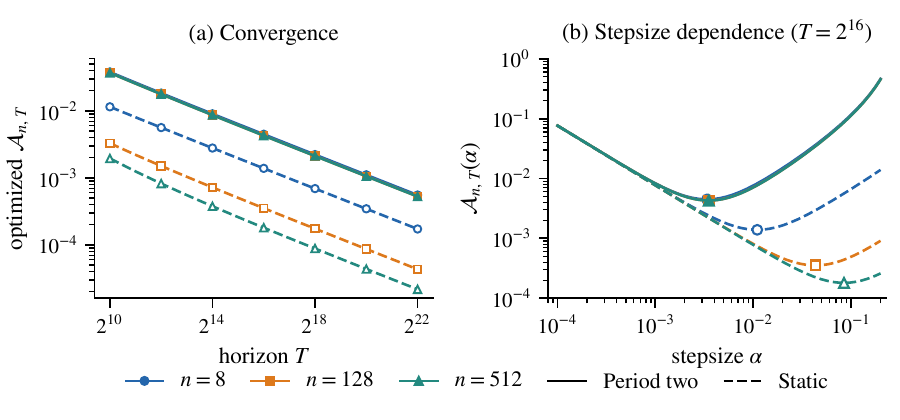}
\caption{Quadratic comparison on the networks in \eqref{eq:uniform-network} and their static counterparts $R_t=C_t=C$, with $n\in\{8,128,512\}$. \textup{(a)} Error versus horizon, with the constant stepsize optimized numerically for each network and each $(n,T)$. \textup{(b)} Error versus stepsize at $T=2^{16}$; markers indicate the numerically selected minima.}
\label{fig:numerical-checks}
\end{figure}
}

\FloatBarrier
\endgroup
\section{Discussion}\label{sec:discussion}
We showed that stochastic Push-Pull need not achieve the usual linear speedup on time-varying networks. Matching upper and lower convergence bounds instead identify $\neff$ as its speedup factor, a metric that characterizes periodic networks. The experiments agree with these theoretical findings.

Limitations of this work include that we focus on periodic networks, and the lower bound applies to Push-Pull with constant stepsizes. Extensions to non-periodic networks, adaptive stepsizes or broader algorithm classes are natural future directions. Whether $\neff$ characterizes the optimal complexity of time-varying decentralized optimization with row- and column-stochastic mixing matrices, or more advanced algorithms can achieve better convergence, is another open question.

\appendix

{\color{black}
\section{Upper bound for the counterexample}
\label{app:uniform-example-upper}

We prove the upper bound in Theorem~\ref{thm:periodic-counterexample}, using the notation of its proof. Throughout this appendix, $0<\alpha\le1/16$. Put $\Pi=I-J$ and $\delta_t=\Pi X_t$. The identity $\Pi R_t=(C/2)\Pi$ and \eqref{eq:example-residual} give
\begin{equation}\label{eq:example-disagreement-recursion}
 \begin{aligned}
 \delta_{t+1}&=(C/2-\alpha I)\delta_t-\alpha E_t-\alpha\Pi\varepsilon_t,\\
 E_{t+1}&=CE_t+(C-I)(\delta_t+\Pi\varepsilon_t),\qquad\delta_0=E_0=0.
 \end{aligned}
\end{equation}
Let $v_1,\ldots,v_{n-1}$ be an orthonormal eigenbasis of $C$ on $\one^\perp$, chosen so that $v_1=\sqrt2\ell_-$. Write $Cv_j=\mu_jv_j$, where $\mu_1=-1/4$ and $-1/4\le\mu_j<1$. Define
\[
 d_{j,t}=v_j^\top\delta_t,\qquad e_{j,t}=v_j^\top E_t,\qquad\eta_{j,t}=v_j^\top\varepsilon_t.
\]
For each $j$, \eqref{eq:example-disagreement-recursion} reduces to
\begin{equation}\label{eq:example-spectral-recursion}
 \begin{bmatrix}d_{j,t+1}\\e_{j,t+1}\end{bmatrix}=M_{\mu_j}\begin{bmatrix}d_{j,t}\\e_{j,t}\end{bmatrix}+c_{\mu_j}\eta_{j,t},\qquad M_\mu=\begin{bmatrix}\mu/2-\alpha&-\alpha\\\mu-1&\mu\end{bmatrix},\qquad c_\mu=\begin{bmatrix}-\alpha\\\mu-1\end{bmatrix}.
\end{equation}
The initial coordinates vanish. The noises $\eta_{j,t}$ have mean zero and unit variance, and are uncorrelated for distinct pairs $(j,t)$. For
\[
 f_{\mu,l}:=(1,0)M_\mu^lc_\mu,\qquad l\ge0,
\]
expansion of \eqref{eq:example-spectral-recursion} yields
\[
 d_{j,t}=\sum_{r=0}^{t-1}f_{\mu_j,t-1-r}\eta_{j,r}.
\]

We first bound the sum of the absolute response coefficients. Let $N_\mu=|M_\mu|$, where the absolute value is entrywise. Then
\[
 |f_{\mu,l}|\le(1,0)N_\mu^l|c_\mu|.
\]
For $0\le\mu<1$, set $a_\mu=|\mu/2-\alpha|$. We have
\[
 N_\mu=\begin{bmatrix}a_\mu&\alpha\\1-\mu&\mu\end{bmatrix},\qquad a_\mu+\alpha=\max\{\mu/2,2\alpha-\mu/2\}\le\frac12,
\]
and hence
\[
 \det(I-N_\mu)=(1-\mu)(1-a_\mu-\alpha)>0.
\]
For $-1/4\le\mu<0$,
\[
 N_\mu=\begin{bmatrix}\alpha-\mu/2&\alpha\\1-\mu&-\mu\end{bmatrix},\qquad\det(I-N_\mu)=(1+\mu)(1+\mu/2)-2\alpha\ge\frac{17}{32}>\frac14.
\]
In both cases, $N_\mu$ is nonnegative, its diagonal entries are less than one, and $\det(I-N_\mu)>0$. The two-dimensional characteristic equation therefore gives $\rho(N_\mu)<1$. Summing the matrix geometric series, we obtain
\begin{equation}\label{eq:example-spectral-response-bound}
 \begin{aligned}
 \|f_\mu\|_{\ell_1}&:=\sum_{l=0}^{\infty}|f_{\mu,l}|\le(1,0)(I-N_\mu)^{-1}|c_\mu|\\
 &=\begin{cases}\displaystyle\frac{2\alpha}{1-a_\mu-\alpha}\le4\alpha,&0\le\mu<1,\\[6pt]\displaystyle\frac{2\alpha}{(1+\mu)(1+\mu/2)-2\alpha}\le8\alpha,&-1/4\le\mu<0.
 \end{cases}
 \end{aligned}
\end{equation}

Set $\ell_+=(e_1+e_2)/2-u$, $k=1-\alpha$, and $\bar\varepsilon_t=u^\top\varepsilon_t$. The row averages of the mixing matrices satisfy
\[
 u^\top R_t=u^\top+\frac12\ell_+^\top+\frac12(-1)^t\ell_-^\top.
\]
Since $u^\top E_t=0$, averaging \eqref{eq:example-residual} gives
\begin{equation}\label{eq:example-average}
 \bar x_{t+1}=k\bar x_t+\frac12\ell_+^\top\delta_t+\frac12(-1)^t\ell_-^\top\delta_t-\alpha\bar\varepsilon_t,\qquad\bar x_0=1.
\end{equation}
The orthogonality $\ell_+^\top\ell_-=0$ allows us to write
\[
 \ell_+=\sum_{j=2}^{n-1}\beta_jv_j,\qquad\beta_j=v_j^\top\ell_+,\qquad\sum_{j=2}^{n-1}\beta_j^2=\|\ell_+\|^2=\frac12-\frac1n.
\]
Also $\ell_-^\top\delta_t=d_{1,t}/\sqrt2$. Define
\[
 \widetilde\eta_{1,r}=(-1)^{r+1}\eta_{1,r},\qquad\widetilde f_l=(-1)^lf_{-1/4,l}.
\]
The alternating term in \eqref{eq:example-average} then satisfies
\[
 (-1)^td_{1,t}=\sum_{r=0}^{t-1}\widetilde f_{t-1-r}\widetilde\eta_{1,r},\qquad\|\widetilde f\|_{\ell_1}=\|f_{-1/4}\|_{\ell_1}\le8\alpha.
\]
Let $g_l=k^l$ for $l\ge0$, and define $(g*f)_l=\sum_{i=0}^lg_{l-i}f_i$. Substituting the coordinate expansions into \eqref{eq:example-average} yields
\begin{align*}
 \bar x_t-k^t&=-\alpha\sum_{r=0}^{t-1}g_{t-1-r}\bar\varepsilon_r\\
 &\quad+\frac12\sum_{j=2}^{n-1}\beta_j\sum_{r=0}^{t-2}(g*f_{\mu_j})_{t-2-r}\eta_{j,r}\\
 &\quad+\frac1{2\sqrt2}\sum_{r=0}^{t-2}(g*\widetilde f)_{t-2-r}\widetilde\eta_{1,r}.
\end{align*}
Empty sums are understood to be zero. The sequences $\bar\varepsilon_t$, $\eta_{j,t}$ for $j\ge2$, and $\widetilde\eta_{1,t}$ are pairwise uncorrelated at every pair of times. Their variances are $1/n$, $1$, and $1$, respectively. Therefore,
\begin{align*}
 \operatorname{Var}(\bar x_t)&=\frac{\alpha^2}{n}\sum_{l=0}^{t-1}g_l^2+\frac14\sum_{j=2}^{n-1}\beta_j^2\sum_{l=0}^{t-2}(g*f_{\mu_j})_l^2\\
 &\quad+\frac18\sum_{l=0}^{t-2}(g*\widetilde f)_l^2.
\end{align*}
Using
\[
 \|g\|_{\ell_2}^2=\frac1{\alpha(2-\alpha)},\qquad\|g*f\|_{\ell_2}\le\|g\|_{\ell_2}\|f\|_{\ell_1},
\]
and \eqref{eq:example-spectral-response-bound}, we obtain, for every $t\ge0$,
\begin{equation}\label{eq:example-variance-upper}
 \begin{aligned}
 \operatorname{Var}(\bar x_t)&\le\frac1{\alpha(2-\alpha)}\left[\frac{\alpha^2}{n}+(8\alpha)^2\left(\frac14\sum_{j=2}^{n-1}\beta_j^2+\frac18\right)\right]\\
 &=\frac{\alpha}{2-\alpha}\left[\frac1n+16\left(1-\frac1n\right)\right]\le\frac{16\alpha}{2-\alpha}.
 \end{aligned}
\end{equation}
Since $\E[\bar x_t]=k^t$, summation of the deterministic term gives
\[
 \begin{aligned}
 \mathcal A_{n,T}(\alpha)&=\frac1T\sum_{t=0}^{T-1}\bigl(k^{2t}+\operatorname{Var}(\bar x_t)\bigr)\\
 &\le\frac{1-k^{2T}}{\alpha(2-\alpha)T}+\frac{16\alpha}{2-\alpha}\le\frac{1/(\alpha T)+16\alpha}{2-\alpha}.
 \end{aligned}
\]
For $T\ge256$, the choice $\alpha=1/(4\sqrt T)$ satisfies $\alpha\le1/16$. Thus
\[
 \mathcal A_{n,T}\!\left(\frac1{4\sqrt T}\right)\le\frac{8}{(2-\alpha)\sqrt T}\le\frac{128}{31\sqrt T}<\frac5{\sqrt T},
\]
which proves the upper bound in Theorem~\ref{thm:periodic-counterexample}.
}

\section{Additional numerical details}\label{app:numerical-details}

\paragraph{Network families for Figure~\ref{fig:logistic-neff}}
Let $u=\one/n$, $J=\one u^\top$, and $B=(I+3S)/8$, where $S$ exchanges
coordinates one and two. For $\kappa\in\{0,1/64,1/16,1/4\}$, set
$a_\kappa=9\sqrt{\kappa}/(16\sqrt2)$ and define, for $r=0,1$,
\begin{equation}\label{eq:experimental-network}
 R_r=B+(1/2-a_\kappa)J+a_\kappa\one e_{r+1}^\top,
 \qquad C_r=B+J/2.
\end{equation}
These matrices are positive, with $R_r$ row stochastic and $C_r$ doubly
stochastic.
Substitution into \eqref{eq:br-D} gives
\[
 b_r=u+(-1)^{r+1}\sqrt{\kappa/2}(e_1-e_2),\qquad
 h=1,\quad q=\frac1n+\kappa,\quad \neff=\frac{n}{1+\kappa n}.
\]
We also compute the phase weights from \eqref{eq:finite-response} and
\eqref{eq:finite-weights}. When $\kappa=0$, the network is static with
$R_r=C_r=B+J/2$.

\paragraph{Stepsize selection for Figure~\ref{fig:logistic-neff}}
The initial grid contains 25 logarithmically spaced stepsizes in
$[10^{-4},1]$. For each network and worker count, we identify the grid
point with the smallest mean $\mathcal A_T$ over six trajectories.
We add the geometric midpoints between each selected point and its two
neighbors, and evaluate the union of these added stepsizes on every
network. The final stepsize minimizes the mean over the 33-point refined grid.
The 12 evaluation trajectories use separate oracle seeds. All tuning and
evaluation runs compute the full gradient at every iteration.

\paragraph{Network selection for Figure~\ref{fig:physical-versus-effective}}
For each $n\in\{4,6,8,10,12,16\}$, we generate period-four candidate
networks using the following pairs of Gamma concentration and positive floor:
\[
\begin{gathered}
 (0.05,0.0005),\ (0.08,0.001),\ (0.12,0.002),\ (0.18,0.003),\\
 (0.30,0.003),\ (0.50,0.005),\ (0.90,0.005),\ (1.50,0.005).
\end{gathered}
\]
Independent Gamma entries are normalized by rows for $R_r$ and by columns
for $C_r$. We retain candidates with one-period SLEM (second-largest eigenvalue modulus) at most $0.4$ and $h\in[0.8,1.2]$. Among these
candidates, we select the networks whose $\neff$ values are closest to
the low, intermediate, and high targets.

{\color{black}
\paragraph{Period-two comparison}
For Figure~\ref{fig:numerical-checks}, we use the matrices in \eqref{eq:uniform-network} with $n\in\{8,128,512\}$, with the first pair corresponding to $(0,0)\in\mathbb Z_s^2$. Write $m=n/2$ and set $S=3I_m/4+P/4$, with $P$ as in the proof of Theorem~\ref{thm:periodic-counterexample}. Let $S v_j=\mu_jv_j$, $j=1,\ldots,m$, be an orthonormal eigendecomposition of $S$, with $\mu_1=1$ and $v_1=\one_m/\sqrt m$. For $j\ge2$, the eigenvector $v_j\otimes\one_2/\sqrt2$ of $C$ enters the average update with coefficient $(v_j)_1/(2\sqrt2)$. Set $\omega_j=(v_j)_1^2/8$ and $k=1-\alpha$. Define
\[
 \begin{aligned}
 A_\mu&=\begin{bmatrix}\mu/2-\alpha&-\alpha&0\\\mu-1&\mu&0\\1&0&k\end{bmatrix},& c_\mu&=\begin{bmatrix}-\alpha\\\mu-1\\0\end{bmatrix},\\
 A_-&=\begin{bmatrix}1/8+\alpha&\alpha&0\\5/4&1/4&0\\1/2&0&k\end{bmatrix},& c_-&=\begin{bmatrix}\alpha\\5/4\\0\end{bmatrix}.
 \end{aligned}
\]
The first two coordinates are the model and tracker coordinates, and the third records their filtered contribution to the average. The signed coordinate $x_t^-$ from \eqref{eq:example-contrast} gives the matrix $A_-$. Starting from zero covariance matrices, compute
\[
 Q_{\mu,t+1}=A_\mu Q_{\mu,t}A_\mu^\top+c_\mu c_\mu^\top,\qquad Q_{-,t+1}=A_-Q_{-,t}A_-^\top+\tfrac12c_-c_-^\top.
\]
The variance of the direct average-noise response satisfies $\nu_{t+1}=k^2\nu_t+\alpha^2/n$, with $\nu_0=0$. Orthogonality of the noise coordinates gives, with $e_3=(0,0,1)^\top$,
\begin{equation}\label{eq:quadratic-exact-moments}
 \mathcal A_{n,T}(\alpha)=\frac1T\sum_{t=0}^{T-1}\left[k^{2t}+\nu_t+e_3^\top Q_{-,t}e_3+\sum_{j=2}^{m}\omega_j e_3^\top Q_{\mu_j,t}e_3\right].
\end{equation}
We augment each covariance recursion with a constant coordinate and its accumulated output, and evaluate the resulting matrix powers by repeated squaring. For the static comparison, the scalar recursion $\bar x_{t+1}=k\bar x_t-\alpha u^\top\varepsilon_t$ gives
\[
 \mathcal A^{\rm stat}_{n,T}(\alpha)=\frac1T\sum_{t=0}^{T-1}\left[k^{2t}+\frac{\alpha^2}{n}\sum_{j=0}^{t-1}k^{2j}\right],
\]
where the inner sum is zero for $t=0$.

For each network and each $(n,T)$, the search grid combines 401 logarithmically spaced stepsizes in $[10^{-10},2]$ and 121 linearly spaced stepsizes in $[0.2,2]$. Each local minimum on the grid is refined by bounded scalar minimization in $\log\alpha$ between its two neighboring grid points, with absolute tolerance $10^{-10}$. We select the smallest error among the refined candidates, the grid endpoints, and the value at $\alpha=0$. For $\alpha\ge2$, the deterministic mean $k^t$ gives $\mathcal A_{n,T}(\alpha)\ge1$.
}

\bibliographystyle{references}
\bibliography{pushpull_references}

\end{document}